\newcommand{\md}{\mathrm{d}}
\newcommand{\e}{\varepsilon}
\newcommand{\unitv}{{\mathbf{e}}}
\newcommand{\dirgraph}{\mathbb{G}}
\def\Z2{\ensuremath{\mathbb{Z}^2}}
\def\R2{\ensuremath{\mathbb{R}^2}}
\def\E2{\ensuremath{\mathcal{E}^2}}
\def\passage{\ensuremath{\mathbb{P}}}
\newtheorem{thm}{Theorem}[section]
\newtheorem{prop}[thm]{Proposition}
\newtheorem{lem}[thm]{Lemma}
\newtheorem{cor}[thm]{Corollary}
\newtheorem{clam}[thm]{Claim}
\newtheorem{rem}[thm]{Remark}
\newtheorem{df}[thm]{Definition}
\numberwithin{equation}{section}
\begin{document}

\title{Busemann functions and infinite geodesics in two-dimensional first-passage percolation}
\date{\today}
\author{Michael Damron \thanks{M. D. is supported by an NSF postdoctoral fellowship and NSF grants DMS-0901534 and DMS-1007626.} \\ \small Department of Mathematics \\ \small Princeton University \and Jack Hanson \thanks{J. H. is supported by an NSF graduate fellowship and NSF grant PHY-1104596.} \\ \small Department of Physics \\ \small Princeton University}

\maketitle

\begin{abstract}
We study first-passage percolation on $\mathbb{Z}^2$, where the edge weights are given by a translation-ergodic distribution, addressing questions related to existence and coalescence of infinite geodesics. Some of these were studied in the late 90's by C. Newman and collaborators under strong assumptions on the limiting shape and weight distribution. In this paper we develop a framework for working with distributional limits of Busemann functions and use it to prove forms of Newman's results under minimal assumptions. For instance, we show a form of coalescence of long finite geodesics in any deterministic direction. We also introduce a purely directional condition which replaces Newman's global curvature condition and whose assumption we show implies the existence of directional geodesics. Without this condition, we prove existence of infinite geodesics which are directed in sectors. Last, we analyze distributional limits of geodesic graphs, proving almost-sure coalescence and nonexistence of infinite backward paths. This result relates to the conjecture of nonexistence of ``bigeodesics.''
\end{abstract}

\tableofcontents

\section{Introduction}

First-passage percolation (FPP) was introduced by Hammersley and Welsh \cite{HW} as a model for fluid flow through a porous medium. However, it has since developed into a field of its own, serving for instance as a model for growing interfaces (see \cite{KS} and connections to other models \cite{HH}) and competing infections (see \cite{BS, DH, GM, HP, Hoffman1}). For a survey of recent results, see \cite{GK}.

We consider FPP on $(\Z2,\E2),$ the two-dimensional square lattice. $\mathbb{P}$ will denote a probability measure on the space $\Omega = \mathbb{R}^{\E2}$ (satisfying some conditions outlined in the next section). An element $\omega \in \Omega$ represents an edge-weight configuration; the passage time across the edge $e$ is denoted $\omega_e = \omega(e)$. The passage time between two sites $x,y$ will be called
\[
\tau(x,y) = \inf_{\gamma:x \to y} \tau(\gamma)\ ,
\]
where the infimum is over all (finite) lattice paths from $x$ to $y$ and $\tau(\gamma) = \sum_{e \in \gamma} \omega_e$.

In this paper we study geodesics, (typically self-avoiding) paths in $\mathbb{Z}^2$ which are everywhere time-minimizing. Precisely, define a finite geodesic from $x$ to $y$ to be a finite lattice path $\gamma$ from $x$ to $y$ such that $\tau(\gamma) = \tau(x,y)$. Define an infinite geodesic to be an infinite path such that each finite subpath is a finite geodesic. In the mid 90's, Newman \cite{Newman95} and Licea-Newman \cite{LN}, along with Wehr \cite{Wehr} began the rigorous study of infinite geodesics. This was in part motivated by connections between ``bigeodesics'' in FPP and ground states of disordered ferromagnetic spin models \cite{FLN, newmanbook}. The main questions involve existence of infinite geodesics with asymptotic directions, uniqueness and coalescence of such geodesics, and absence of bigeodesics. After considerable progress on lattice FPP, Howard and Newman gave an essentially complete description for a continuum variant, called Euclidean FPP \cite{HN}.

The main theorems proved to date require heavy assumptions on the model, for instance strong moment bounds and so-called curvature inequalities (the establishment of which provides a major open problem in FPP). The main goals of this paper are to prove versions of the current geodesic theorems under minimal assumptions necessary to guarantee their validity. Because the methods of Newman and collaborators involve curvature bounds and concentration inequalities (the latter of which cannot hold under low moment assumptions), we are forced to develop completely new techniques. 

Our analysis centers on Busemann functions, which were used and analyzed in papers of Hoffman \cite{Hoffman1, Hoffman2}. His work was one of the first (along with Garet-Marchand \cite{GM}) to assert existence of multiple disjoint infinite geodesics under general assumptions, finding at least four almost surely. The methods are notable in their ability to extract any information without knowing the existence of limits for Busemann functions. Indeed, proving the existence of such limits, corresponding to
\[
\lim_{n \to \infty} \left[ \tau(x,x_n) - \tau(y,x_n) \right]
\]
for fixed $x,y$ and a deterministic sequence of vertices $(x_n)$ growing to infinity along a ray, provides a major open problem and appears to be an impediment to further analysis of geodesics in the model. Incidentally, in an effort to describe the microstructure of the limiting shape for the model, Newman \cite{Newman95} was able to show that under strong assumptions, this limit exists in Lebesgue-almost every direction. 

One main aim of the present paper is to develop a framework to overcome the existence of the above limit. We will analyze distributional limits of Busemann functions and relate these back to the first-passage model. The relationship between Busemann functions and geodesics will be preserved in the limit and will provide information about directional geodesics, coalescence, and the structure of geodesic graphs, the latter of which gives nonexistence of certain types of bigeodesics.

\subsection{Main results}

We will make one of two main assumptions on the passage time distribution. These relate to the degree of independence in the model.  The first deals with i.i.d. passage times:
\begin{enumerate}
\item[{\bf A1}] $\mathbb{P}$ is a product measure whose common distribution satisfies the criterion of Cox and Durrett \cite{coxdurrett}: if $e_1, \ldots, e_4$ are the four edges touching the origin,
\begin{equation}\label{eq: jackisherenow}
\mathbb{E} \left[ \min_{i=1,\ldots, 4} \omega_{e_i} \right]^2 < \infty\ .
\end{equation}
Furthermore we assume $\mathbb{P}(\omega_e= 0) < p_c=1/2$, the bond percolation threshold for $\mathbb{Z}^2$.
\end{enumerate}
Condition \eqref{eq: jackisherenow} is implied by, for example, the assumption $\mathbb{E} \omega_e < \infty$.

The other assumption is on distributions that are only translation-invariant. Condition (d) below deals with the limit shape, which is defined in the next paragraph.
\begin{enumerate}
\item[{\bf A2}] $\mathbb{P}$ is a measure satisfying the conditions of Hoffman \cite{Hoffman2}:
\begin{enumerate}
\item $\mathbb{P}$ is ergodic with respect to translations of $\mathbb{Z}^2$;
\item $\mathbb{P}$ has all the symmetries of $\mathbb{Z}^2$;
\item $\mathbb{E} \omega_e^{2+\e} < \infty$ for some $\e>0$;
\item the limit shape for $\mathbb{P}$ is bounded.
\end{enumerate}
\end{enumerate}
Some of the conditions here can be weakened. For instance, the $2+\e$ moment condition can be replaced with a condition of a finite Lorentz-type norm; see \cite{boivin} for details.

In each of these settings, a ``shape theorem'' has been proved \cite{boivin, coxdurrett} for the set of sites accessible from 0 in time $t$.  For $x,y \in \mathbb{R}^2$ we set $\tau(x,y) = \tau(\tilde x, \tilde y)$, where $\tilde x$ and $\tilde y$ are the unique points in $\mathbb{Z}^2$ such that $x \in \tilde x + [-1/2,1/2)^2$ and $y \in \tilde y + [-1/2,1/2)^2$. For any $t\geq 0$ write $B(t)$ for the set of $x$ in $\mathbb{R}^2$ such that $\tau(0,x) \leq t$ and $B(t)/t = \{x/t : x \in B(t)\}$.  There exists a deterministic compact convex set $\mathcal{B}$, symmetric about the axes and with nonempty interior such that for each $\e>0$,
\[
\mathbb{P}\left( (1-\e) \mathcal{B} \subseteq B(t)/t \subseteq (1+\e) \mathcal{B} \text{ for all large } t\right) = 1\ .
\]
The statement that $\mathcal{B}$ has nonempty interior is not explicitly proved in \cite{boivin} but follows from the maximal lemma stated there.

\subsubsection{Directional results}

Our first results deal with asymptotic directions for infinite geodesics. Much is known about such questions under various strong assumptions (for instance uniformly positive curvature of $\mathcal{B}$, exponential moments for $\mathbb{P}$; see Section~\ref{sec: global} for a more precise discussion). However, under only {\bf A1} or {\bf A2}, very little is known. After initial results by H\"aggstr\"om-Pemantle \cite{HP}, Garet-Marchand \cite{GM} and Hoffman \cite{Hoffman1}, it was proved by Hoffman \cite{Hoffman2} that under {\bf A2}, there exist at least 4 infinite geodesics that are pairwise disjoint almost surely. Nothing is known about the directions of the geodesics; for instance, Hoffman's results do not rule out the case in which the geodesics spiral around the origin.

Below we will show that under {\bf A1} or {\bf A2} there are geodesics that are asymptotically directed in sectors of aperture no bigger than $\pi/2$. Under a certain directional condition on the boundary of the limit shape (see Corollary~\ref{cor: exposed}) we show existence of geodesics with asymptotic direction. To our knowledge, the only work of this type so far \cite[Theorem~2.1]{Newman95} requires a global curvature assumption to show the existence of geodesics in even one direction.

To describe the results, we endow $[0,2\pi)$ with the distance of $S^1$: say that $dist(\theta_1,\theta_2) < r$ if there exists an integer $m$ such that $|\theta_1-\theta_2 - 2\pi m| < r$. For $\Theta \subseteq [0,2\pi)$ we say that a path $\gamma = x_0, x_1, \ldots$ is {\it asymptotically directed in} $\Theta$ if for each $\e>0$, $\arg x_k \in \Theta_\e \text{ for all large } k$, where $\Theta_\e = \{\theta: dist(\theta,\phi) < \e \text{ for some } \phi \in \Theta\}$. For $\theta \in [0,2\pi)$, write $v_\theta$ for the unique point of $\partial \mathcal{B}$ with argument $\theta$. Recall that a supporting line $L$ for $\mathcal{B}$ at $v_\theta$ is one that touches $\mathcal{B}$ at $v_\theta$ such that $\mathcal{B}$ lies on one side of $L$. If $\theta$ is an angle such that $\partial \mathcal{B}$ is differentiable at $v_\theta$ (and therefore has a unique supporting line $L_\theta$ (the tangent line) at this point), we define an interval of angles $I_\theta$:
\begin{equation}\label{eq: last_night}
I_\theta = \{\theta' : v_{\theta'} \in L_\theta\}\ .
\end{equation}

\begin{thm}\label{thm: sectors}
Assume either {\bf A1} or {\bf A2}. If $\partial \mathcal{B}$ is differentiable at $v_\theta$, then with probability one there is an infinite geodesic containing the origin which is asymptotically directed in $I_\theta$.
\end{thm}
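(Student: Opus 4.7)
The plan is to construct a Busemann-type cocycle $B$ as a distributional subsequential limit of passage-time differences along a sequence tending to infinity in direction $\theta$, and then to use $B$ to build a forward-infinite geodesic out of the origin whose asymptotic direction is pinned down by the linearity of $\widetilde{\mathbb{E}}[B(0,\cdot)]$ together with the differentiability of $\partial\mathcal{B}$ at $v_\theta$.

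Fix $z_n \in \mathbb{Z}^2$ with $z_n/|z_n| \to (\cos\theta,\sin\theta)$ and set $\rho_n(x,y) = \tau(x,z_n)-\tau(y,z_n)$. Since $|\rho_n(x,y)| \leq \tau(x,y)$, the family $\{(\omega,\rho_n)\}_n$ is tight on a suitable Polish product space, and a subsequential weak limit gives a probability measure $\widetilde{\mathbb{P}}$ on pairs $(\omega,B)$ for which $B$ is an additive cocycle ($B(x,y)+B(y,z)=B(x,z)$) satisfying $|B(x,y)| \leq \tau(x,y)$, the joint law is lattice-translation invariant, the recursion $B(x,y) = \min_{w \sim x}[\omega_{xw}+B(w,y)]$ is inherited from the prelimit, and the $\omega$-marginal is unchanged. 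The cocycle identity together with translation invariance force $\widetilde{\mathbb{E}}[B(0,z)]$ to be linear in $z$; writing it as $\rho \cdot z$ and using the shape theorem (with the $L^1$ control coming from {\bf A1}/{\bf A2}) I would identify $\rho$ as the gradient of the norm dual to $\mathcal{B}$ evaluated at $(\cos\theta,\sin\theta)$, well-defined thanks to differentiability of $\partial\mathcal{B}$ at $v_\theta$. Convex duality then gives $\rho \cdot y = \mu(y)$ (with $\mu$ the norm whose unit ball is $\mathcal{B}$) precisely when $y \in \mathbb{R}_{\geq 0}\cdot \{v_{\theta'}: \theta' \in I_\theta\}$.

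On the enlarged space define the directed graph $\dirgraph$ by placing $x \to y$ between nearest neighbours iff $B(x,y)=\omega_{xy}$. The minimizing recursion guarantees that every vertex has at least one outgoing edge, and a mass-transport argument in the spirit of Burton--Keane rules out oriented cycles and finite trapped components; this produces an infinite forward path $0=x_0,x_1,\ldots$ out of the origin. Along such a path the cocycle telescopes,
\[
\tau(0,x_n) \;=\; \sum_{i=0}^{n-1}\omega_{x_i x_{i+1}} \;=\; \sum_{i=0}^{n-1}B(x_i,x_{i+1}) \;=\; B(0,x_n),
\]
so the path is a genuine $\omega$-geodesic. Combining the shape theorem $\tau(0,x_n)/|x_n| - \mu(\hat x_n) \to 0$ with a ``Busemann shape theorem'' $B(0,x_n)/|x_n| - \rho \cdot \hat x_n \to 0$ (proved from the cocycle, the subadditive ergodic theorem applied along finitely many rational rays, and the comparison $|B(y,x)|\leq \tau(y,x)$ on short scales) yields $\mu(\hat x_n) - \rho \cdot \hat x_n \to 0$, pinning the accumulation points of $(\arg x_n)$ inside $I_\theta$. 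Because the existence of such a geodesic through $0$ is an $\omega$-measurable event, the full-probability statement descends from $\widetilde{\mathbb{P}}$ to $\mathbb{P}$.

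I expect the main obstacle to be the uniform-in-direction form of the Busemann shape theorem: it needs to hold along the \emph{random} sequence $(x_n)$ produced by $\dirgraph$, not just along deterministic rational rays where the subadditive ergodic theorem applies directly. The route I would take is to discretize the circle into finitely many rational directions, apply the ergodic theorem at each, and interpolate using the triangle-type bound $|B(y,x)|\leq\tau(y,x)$ together with the short-scale part of the shape theorem. A secondary but gentler difficulty is proving that $\dirgraph$ has no oriented cycles and no dead-ends; transferring the tree-like structure of prelimit geodesic trees through the weak limit, using the preserved minimizing recursion for $B$, should suffice.
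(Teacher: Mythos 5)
Your overall plan — take a weak limit of Busemann increments, identify the limiting cocycle's slope $\rho$ via the shape theorem and differentiability, and extract an infinite geodesic from the associated directed graph — is exactly the strategy of the paper (Sections 3--5 and 7.1). But there is a genuine gap in the very first step: a bare subsequential weak limit of the point-to-point increments $\rho_n(x,y)=\tau(x,z_n)-\tau(y,z_n)$ does \emph{not} give a translation-invariant law. Translating $(\omega,\rho_n)$ by $\unitv_m$ produces the law of $\tau(\cdot,z_n+\unitv_m)-\tau(\cdot,z_n+\unitv_m)$, and $z_n+\unitv_m$ does not lie in your one-parameter family $\{z_n\}$, so there is no shift-within-the-family identity to exploit. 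This is precisely why the paper uses point-to-\emph{line} Busemann functions $B_{L_\alpha}$ together with the Cesàro average $\mu_n^*=\tfrac1n\int_0^n\mu_\alpha\,\md\alpha$: then $\mu_\alpha\circ\tilde T_m=\mu_{\alpha+g_\varpi(\unitv_m)}$ (Lemma~3.1), and the averaging turns this drift into exact invariance in the limit (Proposition~3.2). Without translation invariance you cannot run the ergodic-theorem steps you rely on afterwards — the radial limits $\rho_q$, the linearity of $q\mapsto\rho_q$, and especially the Busemann shape theorem along random directions (Theorem~4.4) all use it. Fixing this essentially forces you into the paper's construction (or at least a two-dimensional averaging over target points that reproduces the point-to-line structure).

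A second, related soft spot is the identification of $\rho$. Linearity of $\mathbb{E}[B(0,\cdot)]$ is cheap once you have translation invariance plus the cocycle identity, and $\rho\cdot x\le g(x)$ follows from $|B(0,x)|\le\tau(0,x)$ and the shape theorem; but the crucial equality $\rho\cdot\varpi=1$ requires the separate computation $\mathbb{E}_\mu f(0,x)=g_\varpi(x)$ (Theorem~4.2), which in the paper is a Garet--Marchand/Gou\'er\'e-style averaging argument that again leans on the point-to-line Cesàro structure. ``Identify $\rho$ as the gradient via the shape theorem'' is not a proof; the limit $\lim_n\big(\mathbb{E}\tau(0,z_n)-\mathbb{E}\tau(0,z_n-x)\big)$ need not exist for a general subadditive sequence along a ray. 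Finally, you are working harder than necessary in the graph step: Theorem~\ref{thm: sectors} does not need Burton--Keane or a mass-transport argument to produce an infinite forward path. The paper gets this from the soft observation that, in the prelimit, the geodesic from $0$ to $L_\alpha$ gives a directed self-avoiding path in $\mathbb{G}_\alpha$ leaving $[-n,n]^2$; this is a cylinder event of probability one that survives the weak limit (Proposition~\ref{prop: firstGG2}, item 4). The mass-transport/Burton--Keane machinery is reserved for the coalescence and backward-cluster statements (Theorems~\ref{thm: Gcoalescethm} and \ref{no_back_path}), which also demand the stronger hypotheses {\bf A1'}/{\bf A2'} and upward finite energy that Theorem~\ref{thm: sectors} does not assume.
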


The meaning of the theorem is that there is a measurable set $\mathcal{A}$ with $\mathbb{P}(\mathcal{A})=1$ such that if $\omega \in \mathcal{A}$, there is an infinite geodesic containing the origin in $\omega$ which is asymptotically directed in $I_\theta$. This also applies to any result we state with the phrases ``with probability one there is an infinite geodesic'' or ``with probability one there is a collection of geodesics.''

We now state two corollaries. A point $x \in \partial \mathcal{B}$ is {\it exposed} if there is a line that touches $\mathcal{B}$ only at $x$.

\begin{cor}\label{cor: exposed}
Assume either {\bf A1} or {\bf A2}. Suppose that $v_\theta$ is an exposed point of differentiability of $\partial \mathcal{B}$. With probability one there exists an infinite geodesic containing the origin with asymptotic direction $\theta$.
\end{cor}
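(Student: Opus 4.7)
The plan is to deduce this corollary directly from Theorem~\ref{thm: sectors} by showing that under the extra hypothesis of exposedness, the sector $I_\theta$ in which the infinite geodesic is asymptotically directed collapses to the single angle $\theta$.

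First I would unpack what the hypotheses give at the level of $\partial\mathcal{B}$. Since $\mathcal{B}$ is convex with nonempty interior and $\partial\mathcal{B}$ is differentiable at $v_\theta$, the tangent line $L_\theta$ is the unique supporting line for $\mathcal{B}$ at $v_\theta$. The exposedness hypothesis furnishes some line $L$ that meets $\mathcal{B}$ only at $v_\theta$; because $\mathcal{B}$ is convex with nonempty interior and $L\cap\mathcal{B}=\{v_\theta\}\subseteq\partial\mathcal{B}$, this $L$ is necessarily a supporting line at $v_\theta$. By uniqueness at a point of differentiability, $L=L_\theta$, so
\[
L_\theta \cap \partial\mathcal{B} = L_\theta \cap \mathcal{B} = \{v_\theta\}.
\]
Recalling the definition $I_\theta = \{\theta' : v_{\theta'}\in L_\theta\}$ from \eqref{eq: last_night}, this forces $I_\theta=\{\theta\}$.

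Next I would invoke Theorem~\ref{thm: sectors}: on a measurable event $\mathcal{A}$ of full $\mathbb{P}$-measure, there exists an infinite geodesic $\gamma=x_0,x_1,\ldots$ through the origin which is asymptotically directed in $I_\theta$. Unwinding the definition of ``asymptotically directed in $\Theta$'' with $\Theta=\{\theta\}$ says exactly that for every $\e>0$ we have $\mathrm{dist}(\arg x_k,\theta)<\e$ for all sufficiently large $k$, which is precisely the statement that $\gamma$ has asymptotic direction $\theta$.

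I do not anticipate a real obstacle here: the only substantive point is the convex-geometric fact that at an exposed point of differentiability the tangent line meets $\mathcal{B}$ only at that point, and this is a standard consequence of uniqueness of the supporting hyperplane at a differentiability point combined with the defining property of exposedness. All of the probabilistic content, including the construction of the measurable full-measure event and the existence of the geodesic, is already encapsulated in Theorem~\ref{thm: sectors}.
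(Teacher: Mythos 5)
Your proposal is correct and follows exactly the paper's approach: the paper's proof simply reads ``Apply Theorem~\ref{thm: sectors}, noting that $I_\theta = \{\theta\}$,'' and your argument fills in the convex-geometry details (that the exposing line, being a supporting line at a point of differentiability, must coincide with the tangent $L_\theta$, so $L_\theta\cap\mathcal{B}=\{v_\theta\}$) that the paper leaves implicit.
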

\begin{proof}
Apply Theorem~\ref{thm: sectors}, noting that $I_\theta = \{\theta\}$.
\end{proof}

In the next corollary we show that there are infinite geodesics asymptotically directed in certain sectors. Because the limit shape is convex and compact, it has at least 4 extreme points. Angles corresponding to the arcs connecting these points can serve as the sectors.

\begin{cor}\label{cor: extreme}
Assume either {\bf A1} or {\bf A2}. Let $\theta_1\neq \theta_2$ be such that $v_{\theta_1}$ and $v_{\theta_2}$ are extreme points of $\mathcal{B}$. If $\Theta$ is the set of angles corresponding to some arc of $\partial \mathcal{B}$ connecting $v_{\theta_1}$ to $v_{\theta_2}$, then with probability one there exists an infinite geodesic containing the origin which is asymptotically directed in $\Theta$.
\end{cor}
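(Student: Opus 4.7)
The plan is to deduce this from Theorem~\ref{thm: sectors} by selecting a single angle $\theta$ in the interior of the arc $\Theta$ at which $\partial \mathcal{B}$ is differentiable, and then verifying that the associated interval $I_\theta$ from \eqref{eq: last_night} is contained in $\Theta$. Since Theorem~\ref{thm: sectors} produces an infinite geodesic through the origin that is asymptotically directed in $I_\theta$, this containment immediately yields the corollary.

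First, I would pick $\theta$ strictly between $\theta_1$ and $\theta_2$ along the arc $\Theta$ such that $\partial \mathcal{B}$ is differentiable at $v_\theta$. Because $\mathcal{B}$ is convex and compact, the boundary $\partial \mathcal{B}$ is differentiable except at a countable set of angles (the points where $\partial \mathcal{B}$ admits more than one supporting line). Since the open arc strictly between $v_{\theta_1}$ and $v_{\theta_2}$ is uncountable, such a $\theta$ exists.

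Next, I would show $I_\theta \subseteq \Theta$. Let $L_\theta$ be the tangent line at $v_\theta$, and recall $I_\theta = \{\theta' : v_{\theta'} \in L_\theta\}$. By convexity of $\mathcal{B}$, the set $L_\theta \cap \partial \mathcal{B}$ is a closed line segment containing $v_\theta$; write its endpoints as $v_a$ and $v_b$ (possibly $v_a = v_b = v_\theta$). Suppose toward a contradiction that one of these endpoints, say $v_a$, corresponds to an angle $a \notin \Theta$. Then as we traverse $\partial \mathcal{B}$ from $v_\theta$ toward $v_a$, we must cross one of the arc endpoints $v_{\theta_1}$ or $v_{\theta_2}$, say $v_{\theta_1}$. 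Thus $v_{\theta_1}$ lies on the line segment from $v_\theta$ to $v_a$, which is a nondegenerate segment contained in $\partial \mathcal{B}$. This forces $v_{\theta_1}$ to lie in the relative interior of a line segment on the boundary of $\mathcal{B}$, contradicting the hypothesis that $v_{\theta_1}$ is an extreme point of $\mathcal{B}$. The same argument excludes $b \notin \Theta$, and convexity then gives $I_\theta \subseteq \Theta$.

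Finally, Theorem~\ref{thm: sectors} provides, with probability one, an infinite geodesic through the origin asymptotically directed in $I_\theta$, and hence in $\Theta$. The only subtle point is the geometric step that $I_\theta \subseteq \Theta$; once that is isolated, the corollary follows immediately. I do not anticipate any serious obstacle beyond making sure the argument for the containment handles the degenerate case $I_\theta = \{\theta\}$ (which is trivial) and the case in which $\Theta$ and its complement in $[0,2\pi)$ are treated on the same circular footing.
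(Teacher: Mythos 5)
Your proof is correct and takes essentially the same approach as the paper: both pick a differentiability point $\theta$ strictly between $\theta_1$ and $\theta_2$ along the arc, show $I_\theta \subseteq \Theta$ by observing that otherwise one of the extreme points $v_{\theta_1},v_{\theta_2}$ would sit in the relative interior of a boundary line segment, and then invoke Theorem~\ref{thm: sectors}. The minor difference — you argue directly that $I_\theta \subseteq \Theta$ while the paper shows the complementary open arc $D$ satisfies $D \subseteq I_\theta^c$ — is purely presentational.
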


\begin{proof}

Choose $\theta_3 \in \Theta$ such that $\theta_1 \neq \theta_3 \neq \theta_2$ and $\mathcal{B}$ has a unique supporting line $L_{\theta_3}$ at $v_{\theta_3}$ (this is possible since the boundary is differentiable almost everywhere). 
Let $C$ be the closed arc of $\partial \mathcal{B}$ from $v_{\theta_1}$ to $v_{\theta_2}$ that contains $v_{\theta_3}$ and write $D$ for its open complementary arc. We claim $D \subseteq I_{\theta_3}^c$. This will prove the corollary after applying Theorem~\ref{thm: sectors} with $\theta = \theta_3$.

For a contradiction, suppose that $L_{\theta_3}$ intersects $D$ at some point $v_{\phi}$ and write $S$ for the segment of $L_{\theta_3}$ between $v_{\theta_3}$ and $v_\phi$. Since $L_{\theta_3}$ is a supporting line, the set $\mathcal{B}$ lies entirely on one side of it. On the other hand, since $\mathcal{B}$ is convex and $v_{\theta_3}, v_\phi \in \mathcal{B}$, $S \subseteq \mathcal{B}$. Therefore $S \subseteq \partial \mathcal{B}$ and must be an arc of the boundary. It follows that one of $v_{\theta_1}$ or $v_{\theta_2}$ is in the interior of $S$, contradicting the fact that these are extreme points of $\mathcal{B}$.
\end{proof}

\begin{rem}
If $\mathbb{P}$ is a product measure with $\mathbb{P}(\omega_e=1) = \vec p_c \text{ and } \mathbb{P}(\omega_e <1) = 0$, where $\vec p_c$ is the critical value for directed percolation, \cite[Theorem~1]{AD11} implies that $(1/2,1/2)$ is an exposed point of differentiability of $\mathcal{B}$. Corollary~\ref{cor: exposed} then gives a geodesic in the direction $\pi/4$. Though all points of $\partial \mathcal{B}$  (for all measures not in the class of Durett-Liggett \cite{durrettliggett}) should be exposed points of differentiability, this is the only proven example.
\end{rem}

\begin{rem}
From \cite[Theorem~1.3]{HM}, for any compact convex set $\mathcal{C}$ which is symmetric about the axes with nonempty interior, there is a measure $\mathbb{P}$ satisfying {\bf A2} (in fact, with bounded passage times) which has $\mathcal{C}$ as a limit shape. Taking $\mathcal{C}$ to be a Euclidean disk shows that there exist measures for which the corresponding model obeys the statement of Corollary~\ref{cor: exposed} in any deterministic direction $\theta$.
\end{rem}

\subsubsection{Global results}\label{sec: global}

In this section we use the terminology of Newman \cite{Newman95}. Call $\theta$ a {\it direction of curvature} if there is a Euclidean ball $B_\theta$ with some center and radius such that $\mathcal{B} \subseteq B_\theta$ and $\partial B_\theta \cap \mathcal{B} = \{v_\theta\}$. We say that $\mathcal{B}$ has {\it uniformly positive curvature} if each direction is a direction of curvature and there exists $M< \infty$ such that the radius of $B_\theta$ is bounded by $M$ for all $\theta$.

In \cite[Theorem~2.1]{Newman95}, Newman has shown that under the assumptions (a) $\mathbb{P}$ is a product measure with $\mathbb{E} e^{\beta \omega_e}< \infty$ for some $\beta>0$, (b) the limit shape $\mathcal{B}$ has uniformly positive curvature and (c) $\omega_e$ is a continuous variable, two things are true with probability one.
\begin{enumerate}
\item For each $\theta \in [0,2\pi)$, there is an infinite geodesic with asymptotic direction $\theta$.
\item Every infinite geodesic has an asymptotic direction.
\end{enumerate}
As far as we know, there has been no weakening of these assumptions.

Below we improve on Newman's theorem. We first reduce the moment assumption on $\mathbb{P}$ to that of {\bf A1}. Next we extend the theorem to non-i.i.d. measures. Newman's proof uses concentration inequalities of Kesten \cite{Kesten} and Alexander \cite{Alexander}, which require exponential moments on the distribution (and certainly independence). So to weaken the moment assumptions we need to use a completely different method, involving Busemann functions instead.

To state the theorem, we make slightly stronger hypotheses:
\begin{enumerate}
\item[{\bf A1'}] $\mathbb{P}$ satisfies {\bf A1} and the common distribution of $\omega_e$ is continuous.
\item[{\bf A2'}] $\mathbb{P}$ satisfies {\bf A2} and $\mathbb{P}$ has unique passage times.
\end{enumerate}
The phrase ``unique passage times'' means that for all paths $\gamma$ and $\gamma'$ with distinct edge sets, $\mathbb{P}(\tau(\gamma)=\tau(\gamma'))=0$.

\begin{thm}\label{thm: newman}
Assume either {\bf A1'} or {\bf A2'} and that $\mathcal{B}$ has uniformly positive curvature.
\begin{enumerate}
\item With $\mathbb{P}$-probability one, for each $\theta$ there is an infinite geodesic with direction $\theta$.
\item With $\mathbb{P}$-probability one, every infinite geodesic has a direction.
\end{enumerate}
\end{thm}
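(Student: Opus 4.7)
The plan is to bootstrap Theorem~\ref{thm: sectors} and Corollary~\ref{cor: exposed} using the strong geometric consequences of uniform positive curvature. First, uniform positive curvature forces $\mathcal{B}$ to be strictly convex and differentiable at every boundary point: a corner at $v_\theta$ would admit two distinct supporting lines there, but any circumscribed ball $B_\theta \supseteq \mathcal{B}$ touching $\mathcal{B}$ only at $v_\theta$ has a unique tangent line at $v_\theta$, which would have to serve as the only supporting line of $\mathcal{B}$ at $v_\theta$. Consequently every $v_\theta$ is an exposed point of differentiability, and Corollary~\ref{cor: exposed} applies in every direction.

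For part~(1), fix a countable dense $D \subseteq [0,2\pi)$ and apply Corollary~\ref{cor: exposed} to each $\theta \in D$, obtaining a single full-measure event $\mathcal{A}_0$ on which, for every $\theta \in D$, there is an infinite geodesic $\gamma_\theta$ from the origin with asymptotic direction $\theta$. For $\theta \notin D$ I would pick $\theta_n^- \uparrow \theta$ and $\theta_n^+ \downarrow \theta$ with $\theta_n^\pm \in D$ and produce a geodesic in direction $\theta$ by compactness: take any sequence of finite geodesics from $0$ to vertices $x_k$ with $x_k/|x_k| \to v_\theta/|v_\theta|$, and extract a diagonal subsequence converging edge-by-edge to an infinite self-avoiding path $\gamma^\theta$ through the origin. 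The limit is a geodesic because each of its finite subpaths is a limit of finite geodesics between fixed endpoints. By planarity and the unique passage times provided by \textbf{A1'} or \textbf{A2'}, two distinct infinite geodesics cannot cross and two that share a vertex must agree thereafter; this traps $\gamma^\theta$ eventually between $\gamma_{\theta_n^-}$ and $\gamma_{\theta_n^+}$ for each $n$ (merger with $\gamma_{\theta_n^\pm}$ being excluded, as it would give the wrong direction), forcing asymptotic direction $\theta$.

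For part~(2), let $\gamma$ be an infinite geodesic from a vertex $x$ without an asymptotic direction. Its set of limit directions is a closed arc $[\alpha,\beta]$ with $\alpha < \beta$; pick rational $\phi \in (\alpha,\beta)$. By part~(1) and translation invariance, on a full-measure event there is, from every vertex $y$, an infinite geodesic $\gamma_\phi^y$ with direction $\phi$. Under unique passage times, two infinite geodesics sharing a vertex must coincide from that point on. Because $\phi$ is strictly between two limit directions of $\gamma$, the path $\gamma$ infinitely often places vertices in narrow neighborhoods of the ray $\{x + r v_\phi : r > 0\}$, and planarity prevents $\gamma$ from threading between all the geodesics $\gamma_\phi^y$ indexed by $y$ in a countable dense set straddling that ray. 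Hence $\gamma$ must eventually share a vertex with some $\gamma_\phi^y$, merge with it, and inherit direction $\phi$, contradicting that $\alpha < \phi < \beta$ are distinct limit directions of $\gamma$.

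The main obstacle is the passage from a countable dense set of directions to all real directions in part~(1). The compactness argument easily yields a candidate infinite path through the origin for every $\theta$, but pinning down its asymptotic direction to the single value $\theta$---rather than to some sub-arc containing $\theta$---rides on planarity together with the unique passage times in \textbf{A1'} or \textbf{A2'}, which are the only available substitutes for the concentration inequalities Newman used. A parallel delicacy appears in part~(2): one must rule out the scenario in which $\gamma$ oscillates around direction $\phi$ while carefully avoiding every member of the countable family $\{\gamma_\phi^y\}$, and this too requires the non-crossing and coalescence-upon-meeting properties of planar geodesics under unique passage times.
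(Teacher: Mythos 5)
Your high-level plan (bootstrap the earlier directional results, take subsequential limits of finite geodesics, and sandwich them using planarity and unique passage times) matches the paper's. However, the opening geometric claim is wrong, and it is load-bearing for the way you set things up.

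You assert that uniformly positive curvature forces $\partial\mathcal{B}$ to be differentiable at every boundary point, arguing that the tangent to the circumscribing ball ``would have to serve as the only supporting line of $\mathcal{B}$ at $v_\theta$.'' That inference does not follow. The tangent to $B_\theta$ at $v_\theta$ is \emph{a} supporting line of $\mathcal{B}$, but nothing forces it to be the \emph{only} one. A concrete counterexample: let $\mathcal{B}$ be the intersection of two unit disks centered at $(\pm 1/2,0)$ (a ``lens''). At the corner $(0,\sqrt{3}/2)$ the body has two distinct supporting lines, yet the disk of radius $\sqrt{3}/2+1$ centered at $(0,-1)$ contains $\mathcal{B}$ and meets it on its boundary only at that corner; one checks similarly that every boundary point is a direction of curvature with a uniformly bounded radius. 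Thus uniform positive curvature gives strict convexity (no boundary segments) but \emph{not} differentiability. This is why the paper lists Corollary~\ref{cor: newman2} as a separate result rather than deriving it from Theorem~\ref{thm: newman}, and why the paper's proof deliberately avoids Corollary~\ref{cor: exposed}.

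Because of this, your appeal to Corollary~\ref{cor: exposed} to get a geodesic with exact direction $\theta$ for each $\theta$ in a countable dense set $D$ is not justified. The paper's fix is to use Corollary~\ref{cor: extreme} instead: strict convexity gives that $\mathrm{ext}(\mathcal{B})$ is dense in $\partial\mathcal{B}$, one picks a countable dense $D\subseteq\mathrm{ext}(\mathcal{B})$, and for each pair $\theta_1,\theta_2\in D$ with short angular separation one gets (on a single full-measure event) an infinite geodesic through $0$ asymptotically directed in the arc $I(\theta_1,\theta_2)$. Your sandwiching argument then goes through essentially unchanged with these arc-directed geodesics in place of exactly-directed ones: as the arcs shrink to $\theta$, the trapped subsequential limit of geodesics to $n v_\theta$ is forced to have direction $\theta$, with the crossing argument via unique passage times supplying the planarity input exactly as you describe. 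Finally, your part~(2) sketch (``planarity prevents $\gamma$ from threading between all the $\gamma_\phi^y$'') is not yet an argument; the paper replaces it with a clean two-path version: if $\gamma$ has distinct limit directions, pick a limit direction $\phi$ and two geodesics from $0$ with directions $\phi\pm 3\e/4$; the oscillation forces $\gamma$ to meet their union $P$ at two distinct vertices, and the two resulting geodesic segments between those vertices violate unique passage times. I'd encourage you to replace both the differentiability claim and the threading heuristic with these arguments.
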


The same method of proof shows the following. 

\begin{cor}\label{cor: newman2}
Assume either {\bf A1'} or {\bf A2'} and suppose $v_\theta$ is an exposed point of differentiability of $\partial \mathcal{B}$ for all $\theta$. Then the conclusions of Theorem~\ref{thm: newman} hold.
\end{cor}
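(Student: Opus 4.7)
The plan is to follow the proof of Theorem~\ref{thm: newman} almost verbatim, isolating each use of the uniform positive curvature assumption and noting that the only geometric fact being exploited is the identity $I_\theta = \{\theta\}$ for every $\theta \in [0,2\pi)$. Uniform positive curvature supplies this identity because every supporting line of $\mathcal{B}$ touches $\partial\mathcal{B}$ only at $v_\theta$; the hypothesis of the present corollary supplies it equally well, since this was exactly the content of the proof of Corollary~\ref{cor: exposed}. Consequently, every appeal to Theorem~\ref{thm: sectors} that produced a directional geodesic in the proof of Theorem~\ref{thm: newman} still produces one here, and the only remaining work is to check the two structural statements.

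For part (1) the issue is uniformity in $\theta$: Corollary~\ref{cor: exposed} gives, for each fixed $\theta$, a probability-one event on which a directional geodesic exists. The plan is to apply the corollary simultaneously to a countable dense set $\{\theta_n\} \subset [0,2\pi)$ to obtain a single event $\mathcal{A}$ of probability one on which, for every $n$, there is an infinite geodesic $\gamma_n$ from the origin with asymptotic direction $\theta_n$. For arbitrary $\theta$, choose $\theta_{n_k} \to \theta$ and extract a subsequential limit path $\gamma$ by a standard diagonal compactness argument (only four edges leave each vertex). The limit $\gamma$ is an infinite geodesic from the origin. Its asymptotic direction must be $\theta$: if some angular subsequential limit $\theta^* \neq \theta$ existed, then arbitrarily long initial segments of infinitely many $\gamma_{n_k}$ would lie in a narrow cone around $\theta^*$, contradicting the fact that each $\gamma_{n_k}$ is asymptotically directed in $I_{\theta_{n_k}} = \{\theta_{n_k}\}$ and $\theta_{n_k} \to \theta$.

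For part (2), suppose for contradiction that $\gamma$ is an infinite geodesic whose angular limit set contains two distinct points $\phi_1 < \phi_2$. Pick any $\theta \in (\phi_1,\phi_2)$; by part (1) together with translation invariance, on a single probability-one event every lattice vertex is the starting point of an infinite geodesic in direction $\theta$. Planarity together with unique passage times (available under {\bf A1'} or {\bf A2'}) forces such directional geodesics to coalesce into a forest whose infinite branches partition the lattice into regions; an infinite geodesic whose vertices have arguments accumulating on both sides of $\theta$ would have to cross infinitely many of these branches, which is incompatible with $\gamma$ being time-minimizing on every finite subpath.

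The main obstacle will be part (2). In Newman~\cite{Newman95} this step is handled by concentration inequalities for $\tau$ that require exponential moments and the global curvature bound, neither of which is available here. The substitute must be the distributional Busemann-function framework developed in the body of the paper, together with the coalescence and absence-of-backward-infinite-paths results for geodesic graphs mentioned in the abstract. Verifying that these inputs combine with the pointwise exposed-differentiability hypothesis --- with no uniform-in-$\theta$ curvature modulus --- to preclude oscillation is the delicate point on which the corollary really rests.
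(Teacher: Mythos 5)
Your high-level plan matches the paper: the proof of Corollary~\ref{cor: newman2} is indeed a repetition of the Theorem~\ref{thm: newman} argument, obtaining the initial supply of directional geodesics from Corollary~\ref{cor: exposed} applied to a countable dense set of directions (the paper uses the rationals) rather than from Corollary~\ref{cor: extreme}. However, your execution of part~(1) diverges from the paper in a way that creates a genuine gap.

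You propose to take a subsequential (local) limit $\gamma$ of the \emph{infinite} geodesics $\gamma_{n_k}$, each with asymptotic direction $\theta_{n_k}\to\theta$, and you claim a contradiction from the existence of a second angular limit point $\theta^*\neq\theta$ for $\gamma$ because ``arbitrarily long initial segments of infinitely many $\gamma_{n_k}$ would lie in a narrow cone around $\theta^*$.'' This does not contradict anything: the statement that $\gamma_{n_k}$ is asymptotically directed at $\theta_{n_k}$ constrains only its tail, not any finite initial segment. A geodesic can wander arbitrarily far in direction $\theta^*$ before settling down toward $\theta_{n_k}$, and that is exactly what agreement with $\gamma$ on a long initial piece would look like. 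The paper avoids this by taking a subsequential limit of the \emph{finite} geodesics $\gamma_n$ from $0$ to $nv_\theta$ and sandwiching: it fixes two infinite directional geodesics $\gamma_N^1,\gamma_N^2$ from $0$ in directions within $\e$ of $\theta$, lets $P=\gamma_N^1\cup\gamma_N^2$ and $C_1,C_2$ be the two complementary regions, and shows that each entire $\gamma_n$ (for large $n$) must lie in the region $C_1$ containing $nv_\theta$, because otherwise $\gamma_n$ would re-cross $P$ and create two distinct geodesics from $0$ to a common vertex, contradicting unique passage times. The sandwiching argument is essential, and your version omits it; to repair your proposal you would either have to reproduce that sandwiching for the infinite $\gamma_{n_k}$ (which requires an extra uniformity argument) or simply revert to the paper's finite-geodesic limit.

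Your remarks about part~(2) are directionally correct (the argument is planarity plus unique passage times plus part~(1)), but the speculation at the end that one must invoke the coalescence and absence-of-backward-paths results from Sections~6 is misplaced. Those results are used to prove Theorems~\ref{thm: random_hyperplanes} and \ref{thm: exceptional_set}, not Theorem~\ref{thm: newman}. For the present corollary, the Busemann framework enters only through Corollary~\ref{cor: exposed}, which supplies the directional geodesics; once those are in hand, part~(2) is the same short topological argument as in the paper's proof of Theorem~\ref{thm: newman}: if $\gamma$ oscillates between two angular limit points then it must cross $P=\gamma_1\cup\gamma_2$ (for suitably chosen $\gamma_1,\gamma_2$ from part~(1)) infinitely often, contradicting unique passage times.
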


\begin{rem}
The proofs of the above two results only require that the set of extreme points of $\mathcal{B}$ is dense in $\partial \mathcal{B}$. In fact, a similar result holds for a sector in which extreme points of $\mathcal{B}$ are dense in the arc corresponding to this sector.
\end{rem}

\subsubsection{Coalescence for geodesics}\label{subsec: coalesce}

In this section we describe results for coalescence of infinite geodesics. For this we need some notation. For $S\subseteq \mathbb{R}^2$ define the point-to-set passage time 
\[
\tau(x,S) = \inf_{y \in S} \tau(x,y) \text{ for } x \in \mathbb{R}^2\ .
\]
By the subadditivity property $\tau(x,y) \leq \tau(x,z) + \tau(z,y)$ we find
\begin{equation}\label{eq: subadditivity}
\tau(x,S) \leq \tau(x,y) + \tau(y,S) \text{ for } x,y \in \mathbb{R}^2\ .
\end{equation}
A path $\gamma$ from a point $x \in \Z2$ to a point in 
\begin{equation}\label{eq: hatS}
\hat S = \{y \in \Z2 : y + [-1/2,1/2)^2 \cap S \neq \varnothing\}
\end{equation}
is called a {\it geodesic from $x$ to $S$} if $\tau(\gamma) = \tau(x,S)$. Under assumptions {\bf A1} or {\bf A2}, one can argue from the shape theorem and boundedness of the limit shape that a geodesic from $x$ to $S$ exists $\mathbb{P}$-almost surely. However, it need not be unique.  In the case, though, that we assume {\bf A1'} or {\bf A2'}, there is almost surely exactly one geodesic from $x$ to $S$. Note that if $\gamma$ is a geodesic from $x$ to $S$ and $y\in \gamma$, then the piece of $\gamma$ from $x$ to $y$ is a geodesic from $x$ to $y$ and the piece of $\gamma$ from $y$ to $S$ is a geodesic from $y$ to $S$.

The set $S$ gives a directed geodesic graph $\mathbb{G}_S = \mathbb{G}_S(\omega)$: $\langle x,y \rangle$ is an edge of $\mathbb{G}_S$ if it is in some geodesic from a point to $S$ and $\tau(x,S) \geq \tau(y,S)$ (we will explain more about this graph in Section~\ref{subsec: GG}). We say that a sequence of directed graphs $G_n = (\mathbb{Z}^2,E_n)$ converges to a directed graph $G=(\mathbb{Z}^2,E)$ if each edge $\langle x,y \rangle$ is in only finitely many of the symmetric differences $E_n \Delta E$. If $x$ and $y$ are vertices of a directed graph $G$, write $x \to y$ if there is a directed path from $x$ to $y$ in $G$. Last, we say that two infinite directed paths $\Gamma$ and $\Gamma'$ {\it coalesce} if their (edge) symmetric difference is finite.

For the main theorems on coalescence we need an extra assumption in the case {\bf A2'}. It allows us to apply ``edge modification'' arguments. Write $\omega = (\omega_e,\check{\omega})$, where $\check{\omega}_f = (\omega)_{f\neq e}$. 

\begin{df}
We say that $\passage$ has the {\it upward finite energy property} if for each $\lambda > 0$ such that $\mathbb{P}(\omega_e \geq \lambda)>0$,
		\begin{equation}
		\label{finite_energy_def}
		\passage\left(\omega_e \geq \lambda \, \big|\, \check{\omega} \right) >0 \quad \text{almost surely}\ .
		\end{equation}
\end{df}
Note that if $\mathbb{P}$ is a product measure, it has the upward finite energy property.

\begin{thm}\label{thm: random_hyperplanes}
Assume either {\bf A1'} or both {\bf A2'} and the upward finite energy property. Let $v \in \mathbb{R}^2$ be any nonzero vector and for $\beta \in \mathbb{R}$ define
\[
L_\beta(v) = \{y \in \mathbb{R}^2 : y \cdot v = \beta\}\ .
\]
There exists an event $\mathcal{A}$ with $\mathbb{P}(\mathcal{A}) = 1$ such that for each $\omega \in \mathcal{A}$, the following holds. There exists an ($\omega$-dependent) increasing sequence $(\alpha_k)$ of real numbers with $\alpha_k \to \infty$ such that $\mathbb{G}_{L_{\alpha_k}(v)}(\omega) \to G(\omega)$, a directed graph with the following properties.
\begin{enumerate}
\item Viewed as an undirected graph, $G$ has no circuits.
\item Each $x \in \mathbb{Z}^2$ has out-degree 1 in $G$.
\item (All geodesics coalesce.) Write $\Gamma_x$ for the unique infinite path in $G$ from $x$. If $x,y \in \mathbb{Z}^2$ then $\Gamma_x$ and $\Gamma_y$ coalesce.
\item (Backward clusters are finite.) For all $x \in \mathbb{Z}^2$, the set $\{y \in \mathbb{Z}^2: y \to x \text{ in } G\}$ is finite.
\end{enumerate} 
\end{thm}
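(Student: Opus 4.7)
The plan is to construct the limiting graph $G$ as a subsequential distributional limit driven by point-to-line Busemann cocycles, then verify the four properties in turn, with coalescence (Property 3) being the main obstacle. For each $\alpha>0$, introduce the cocycle
\[
B_\alpha(x,y) = \tau(x, L_\alpha(v)) - \tau(y, L_\alpha(v)), \qquad x,y\in\Z2.
\]
Since $|B_\alpha(x,y)|\le \tau(x,y)$ and $\tau(x,y)$ has finite mean under either assumption, the family $\{B_\alpha(x,y)\}_\alpha$ is tight at each fixed pair $(x,y)$. Diagonalizing across a countable dense family of pairs, one extracts a subsequence $\alpha_k\to\infty$ along which the joint law of $(\omega, B_{\alpha_k}, \dirgraph_{L_{\alpha_k}(v)})$ converges to that of a triple $(\omega, B, G)$, with $B$ an additive cocycle satisfying $B(x,y)\le \tau(x,y)$ and $G$ the measurable graph functional of $(\omega,B)$ whose directed edges are those $\langle x,y\rangle$ with $\omega_{\{x,y\}} = B(x,y)$, oriented toward decreasing $B(0,\cdot)$. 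Skorokhod representation then realizes the convergence almost surely on a single probability space, giving the $\omega$-dependent subsequence required by the theorem.

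Properties 1 and 2 are largely inherited from the prelimits. Under {\bf A1'} or {\bf A2'}, unique passage times make the geodesic from any $x$ to $L_{\alpha_k}(v)$ almost surely unique, so each prelimit vertex has out-degree exactly one; acyclicity is immediate because every edge strictly decreases $\tau(\cdot, L_{\alpha_k}(v))$. Both properties transfer to $G$ once one verifies that at each $x$ there are almost surely no ties among $\{\omega_{\{x,z\}}-B(x,z) : z\sim x\}$, so that the outgoing edge at $x$ is eventually constant along the subsequence. The absence of such ties follows from continuity of the weight distribution in {\bf A1'} and from unique passage times in {\bf A2'}, both of which are inherited by $B$ because $B$ arises as a limit of differences $\tau(x,\cdot)-\tau(y,\cdot)$.

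Coalescence (Property 3) is the crux and will absorb most of the proof. The argument is a Busemann-function adaptation of Licea--Newman. Suppose, for a contradiction, that with positive probability the forward rays $\Gamma_x$ and $\Gamma_y$ from two deterministic vertices never meet. Translation ergodicity of the joint law of $(\omega,B)$ then produces a positive-density event of non-coalescing pairs. Using the upward finite energy property, raise the weights of finitely many carefully chosen edges along the boundary of a large box so that, in the modified configuration, the resulting cocycle $B'$ forces the two rays to agree at a first encounter vertex; since $G$ still has out-degree one at every vertex, this contradicts the positive-probability existence of non-coalescing pairs. The delicate point—and the reason for requiring finite energy in case {\bf A2'}—is that one must verify the finite edge modification induces only a local change in $B$, which is possible because $B$ is a measurable function of $\omega$ through the limit construction and the modified $B'$ is produced from the same subsequence applied to the modified weights.

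With Properties 1--3 in hand, $G$ is a directed forest in which every forward ray coalesces with every other. Suppose some vertex $x$ had an infinite backward cluster $T_x = \{y : y\to x\}$ with positive probability. By translation ergodicity, a positive density of vertices would enjoy this property. A Burton--Keane counting argument on trifurcation vertices—those with three disjoint infinite backward branches—then produces at least $\Omega(n^2)$ trifurcations in an $n\times n$ box, in contradiction with the bound of $n^2$ edges imposed by the out-degree-one property. Hence every backward cluster is almost surely finite, yielding Property 4.
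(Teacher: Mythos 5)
Your high-level framework---distributional limits of Busemann cocycles and geodesic graphs, then verifying each of the four properties---is indeed the paper's strategy, but there are three genuine problems.

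\textbf{The Skorokhod step does not close the gap you need it to close.} The theorem requires that for $\mathbb{P}$-almost every $\omega$ on the \emph{original} space, there exists an \emph{$\omega$-dependent} sequence $\alpha_k$ such that $\mathbb{G}_{L_{\alpha_k}(v)}(\omega)$ converges to a graph with properties 1--4. Skorokhod representation only produces a new probability space with new random variables equal in law; it says nothing about convergence of the original $\mathbb{G}_{L_\alpha}(\omega)$ along any subsequence of the original $\alpha$'s. The paper instead works entirely on the original space: it builds cylinder events $A_{m,n}$ approximating properties 1--4, shows $\mu(A_{m,n(m)})\ge 1-\e 4^{-(m+2)}$, uses the Ces\`aro-average structure of $\mu_n^* = n^{-1}\int_0^n\mu_\alpha\,\md\alpha$ to conclude that a positive-density set $\Lambda_{m,k}\subseteq[0,n_k]$ of parameters $\alpha$ satisfies $\mathbb{P}(\Phi_\alpha^{-1}(A_{m,n(m)}))>1-\e 2^{-(m+2)}$, and then inductively builds a deterministic sequence $(a_m)$ with $a_m\to\infty$ lying in the intersection of enough $\Lambda$'s. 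For $\omega$ in the high-probability event that infinitely many of the pulled-back $\cap_{j\le m}\Phi_{a_m}^{-1}(A_{j,n(j)})$ occur, one extracts the $\omega$-dependent subsequence of $(a_m)$ along which the graphs converge. That averaging-and-Borel-Cantelli step has no analogue in your write-up and cannot be replaced by Skorokhod.

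\textbf{Translation invariance of the limit is not automatic.} You take a raw diagonal subsequential limit of the laws of $(\omega, B_\alpha, \mathbb{G}_{L_\alpha(v)})$. But the law of $B_\alpha$ under translation by $\unitv_m$ becomes the law of $B_{\alpha-g_\varpi(\unitv_m)}$; a subsequential limit of the un-averaged $\mu_\alpha$'s need not be translation invariant, and translation invariance of $\mu$ is exactly what powers the ergodic-theorem and mass-transport steps in both the coalescence and the finite-backward-cluster arguments. The paper's Ces\`aro averaging in $\alpha$ is introduced precisely to fix this, and Lemma~\ref{translate_1} and the proposition after it establish the invariance carefully. Your proof as written does not have a translation-invariant object to run these arguments on.

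\textbf{The Burton--Keane contradiction is misstated.} You conclude by comparing $\Omega(n^2)$ trifurcations against ``the bound of $n^2$ edges imposed by the out-degree-one property,'' but $n^2$ trifurcations are perfectly compatible with $n^2$ edges. The classical argument compares the number of encounter (trifurcation) points inside $[-M,M]^2$ against the \emph{boundary} size of the box, which is $O(M)$; that is the quantity that is exceeded by $c_t M^2$, yielding the contradiction. The coalescence argument is sketched at roughly the right level of abstraction (Licea--Newman barrier, ergodicity, finite-energy modification), though the precise mechanism for the barrier---building a box of uniformly large weights that the competing ray cannot cross cheaply, with the geodesics pinned to pass within small $\ell^1$ distance at some line $L'_N$---is substantially more delicate than ``forces the two rays to agree at a first encounter vertex,'' and this is where the bulk of the technical work in the paper lives.
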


Our last theorem deals with coalescence and asymptotic directions. Before stating it, we discuss some previous results. In 1995, Licea and Newman \cite{LN} proved that given $\theta \in [0,2\pi)$, all directional geodesics almost surely coalesce except in some deterministic (Lebesgue-null) set $D \subseteq [0,2\pi)$. Specifically they showed that under the assumptions (a) $\mathbb{P}$ is a product measure whose one-dimensional marginals are continuous with finite exponential moments and (b) uniformly positive curvature of $\mathcal{B}$,
\begin{equation}\label{eq: liceanewman}
\text{there exists }D \subseteq [0,2\pi) \text{ with Lebesgue measure zero such that if } \theta \in [0,2\pi) \setminus D\ ,
\end{equation}
\begin{enumerate}
\item almost surely, there exists a collection of infinite geodesics $\{\gamma_x : x \in \mathbb{Z}^2\}$ such that each $\gamma_x$ has asymptotic direction $\theta$ and for all $x,y$, the paths $\gamma_x$ and $\gamma_y$ coalesce and 
\item almost surely, for each $x$, there is a unique infinite geodesic containing $x$ with asymptotic direction $\theta$.
\end{enumerate}
Since \cite{LN} it has been an open problem to show that $D$ can be taken to be empty. Zerner \cite[Theorem~1.5]{newmanbook} proved that $D$ can be taken to be countable. In a related exactly solvable model (directed last-passage percolation, using exponential weights on sites), Coupier has proved \cite[Theorem~1(3)]{coupier}, building on work of Ferrari-Pimentel \cite{FP}, that $D$ can be taken to be empty. These results rely on a mapping to the TASEP particle system.

In part 2 of the next theorem, we improve on \eqref{eq: liceanewman} in the general case. The result reduces the set $D$ to be empty for existence of coalescing geodesics (item 1 above). It however does not address uniqueness. We reduced the moment condition of \cite{LN}, extended to non-i.i.d. measures and replaced the global curvature assumption with a directional condition. Without this condition, part 3 gives the existence of coalescing geodesics directed in sectors. For the statement, recall the definition of $I_\theta$ in \eqref{eq: last_night}.

\begin{thm}\label{thm: exceptional_set}
Assume either {\bf A1'} or both {\bf A2'} and the upward finite energy property. Let $\theta \in [0,2\pi)$.
\begin{enumerate}
\item If $\partial \mathcal{B}$ is differentiable at $v_\theta$ then with probability one there exists a collection $\{\gamma_x:x \in \mathbb{Z}^2\}$ of infinite geodesics in $\omega$ such that 
\begin{enumerate}
\item each $x$ is a vertex of $\gamma_x$;
\item each $\gamma_x$ is asymptotically directed in $I_\theta$;
\item for all $x,y \in \mathbb{Z}^2$, $\gamma_x$ and $\gamma_y$ coalesce and
\item each $x$ is on $\gamma_y$ for only finitely many $y$.
\end{enumerate}
\item If $v_\theta$ is an exposed point of differentiability of $\mathcal{B}$ then the above geodesics all have asymptotic direction $\theta$.
\item Suppose $\theta_1 \neq \theta_2$ are such that $v_{\theta_1}$ and $v_{\theta_2}$ are extreme points of $\mathcal{B}$. If $\Theta$ is the set of angles corresponding to some arc of $\partial \mathcal{B}$ connecting $v_{\theta_1}$ to $v_{\theta_2}$ then the above geodesics can be taken to be asymptotically directed in $\Theta$.
\end{enumerate}
\end{thm}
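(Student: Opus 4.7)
The plan is to derive the theorem from Theorem~\ref{thm: random_hyperplanes} and the shape theorem by choosing the hyperplane direction $v$ in Theorem~\ref{thm: random_hyperplanes} appropriately for each part.

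For part~1, I would take $v$ to be the outward unit normal to $\mathcal{B}$ at $v_\theta$, which is well-defined because $\partial \mathcal{B}$ is differentiable at $v_\theta$. Applying Theorem~\ref{thm: random_hyperplanes} with this $v$ yields, on an almost-sure event, a random sequence $\alpha_k \to \infty$ and a limit graph $G$ whose forward paths $\Gamma_x$ coalesce pairwise and whose backward clusters are all finite. Set $\gamma_x := \Gamma_x$: properties~(a), (c), and (d) then follow immediately from parts (2)--(4) of Theorem~\ref{thm: random_hyperplanes}, so the content of part~1 reduces to the directional statement (b).

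For (b), note that $L_\theta$ is characterized by $v_\theta \cdot v = \max_{y \in \mathcal{B}} y \cdot v$, and this maximum is attained on $\partial \mathcal{B}$ exactly at $\{v_{\theta'} : \theta' \in I_\theta\}$. Combined with the shape theorem, this forces the $\mathbb{G}_{L_{\alpha_k}(v)}$-endpoint $z_k(x) \in L_{\alpha_k}(v)$ of the geodesic from $x$ to satisfy $z_k(x)/\alpha_k$ converging to the set $\{v_{\theta'}/(v_\theta \cdot v) : \theta' \in I_\theta\}$, so $\arg z_k(x)$ enters any neighborhood of $I_\theta$ for $k$ large. To transfer this from the endpoint to an arbitrary vertex $u$ of $\gamma_x$, I would use that the convergence $\mathbb{G}_{L_{\alpha_k}(v)} \to G$ forces $u$ to lie on the finite geodesic from $x$ to $z_k(x)$ for all sufficiently large $k$, hence
\[
\tau(x,u) + \tau(u, z_k(x)) = \tau(x, z_k(x)),
\]
and substitute the shape theorem to obtain that, up to $o(\alpha_k)$ error, the same additivity holds for the norms induced by $\mathcal{B}$ of the vectors $u-x$, $z_k(x)-u$, and $z_k(x)-x$. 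Taking $\alpha_k$ much larger than $|u|$, this near-equality in the triangle inequality for the $\mathcal{B}$-norm forces the direction of $u-x$ to lie close to the flat face $L_\theta \cap \mathcal{B}$ of $\partial \mathcal{B}$, so $\arg u$ is close to $I_\theta$; letting $|u| \to \infty$ along $\gamma_x$ completes (b).

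Parts~2 and~3 fall out of part~1. For part~2, exposedness of $v_\theta$ gives $L_\theta \cap \mathcal{B} = \{v_\theta\}$, whence $I_\theta = \{\theta\}$ and ``asymptotically directed in $I_\theta$'' coincides with ``having asymptotic direction $\theta$.'' For part~3, pick $\theta_3 \in \Theta$ distinct from $\theta_1, \theta_2$ such that $\partial\mathcal{B}$ is differentiable at $v_{\theta_3}$ (possible since convex boundaries are differentiable almost everywhere), apply part~1 with $\theta = \theta_3$, and use the argument of Corollary~\ref{cor: extreme} to conclude $I_{\theta_3} \subseteq \Theta$. The main obstacle will be the straightness step in part~1: since $\mathcal{B}$ need not be strictly convex along $L_\theta$, strict convexity cannot be invoked, and I need a geometric estimate converting an $o(\alpha_k)$ defect in the norm triangle inequality into quantitative closeness of $(u-x)/\|u-x\|$ to a direction in $I_\theta$, uniformly along the random subsequence $(\alpha_k)$ produced by Theorem~\ref{thm: random_hyperplanes}.
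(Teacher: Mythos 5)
Your plan to route through Theorem~\ref{thm: random_hyperplanes} inverts the paper's logical order and, as you suspected, the directional step (b) does not close. The paper does \emph{not} derive Theorem~\ref{thm: exceptional_set} from Theorem~\ref{thm: random_hyperplanes}; rather, both theorems are consequences of properties of the limit measure $\mu$ established in Sections~\ref{sec: GG} and~\ref{sec: coalesceG}. The paper's proof of part~1 simply sets $\varpi = v_\theta$, builds $\mu$ as in Section~\ref{sec: mudef}, and invokes Theorem~\ref{thm: nachostheorem} (directionality of $\mathbb{G}$-paths), Theorem~\ref{thm: Gcoalescethm} (coalescence) and Theorem~\ref{no_back_path} (finite backward clusters), then pushes these $\mu$-a.s.\ statements down to $\mathbb{P}$-a.s.\ statements via the regular conditional probability of $\mu$ given $\omega \in \Omega_1$. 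Parts~2 and~3 are then handled exactly as you suggest (via $I_\theta = \{\theta\}$ when $v_\theta$ is exposed, and via the argument of Corollary~\ref{cor: extreme} respectively), so that portion of your proposal matches the paper.

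The gap in your step (b) is genuine and not merely technical. The straightening argument you sketch — passing from $\tau(x,u)+\tau(u,z_k(x)) = \tau(x,z_k(x))$ to near-additivity of $g$ and then to directional control of $u-x$ — requires $\|u-x\|$ to be comparable to $\alpha_k$; otherwise the $o(\alpha_k)$ error from the shape theorem swamps both $g(u-x)$ and the defect $g(u-x)+g(z_k(x)-u)-g(z_k(x)-x)$, and you learn nothing. But the only control you have from $\mathbb{G}_{L_{\alpha_k}(v)}\to G$ is that, for a \emph{fixed} vertex $u$ of $\gamma_x$, the edge set near $u$ stabilizes for $k$ beyond some \emph{random, $u$-dependent} threshold; there is no guarantee (and in general it is false) that $u$ lies on a geodesic to $L_{\alpha_k}(v)$ for any $\alpha_k \asymp \|u\|$. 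So the regime in which your near-additivity is informative is exactly the regime you cannot access. This is the same obstruction that forces Newman's original argument to use concentration inequalities of the Kesten--Alexander type (sub-linear corrections to the shape theorem), which are unavailable under {\bf A1'}/{\bf A2'}; the whole point of the paper's Busemann-function machinery is to sidestep this. The crucial input that your approach is missing is Theorem~\ref{shapetheorem}: the reconstructed Busemann function $f$ satisfies $|f(0,u)-u\cdot\varrho| = o(\|u\|_1)$ with $\varrho$ a fixed linear functional, while at the same time $f(0,u) = \tau(0,u) \geq g(u)(1-\varepsilon)$ for $u$ on a $\mathbb{G}$-path. Comparing a linear lower bound with a linear upper bound in the \emph{same} variable $\|u\|$ (not against $\alpha_k$) is what yields $u\cdot\varrho \geq g(u)(1-2\varepsilon)$, and hence $\arg u \in (I_\theta)_\varepsilon$; see the proof of Theorem~\ref{thm: nachostheorem}. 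To repair your proof, replace step (b) entirely by an appeal to Theorem~\ref{thm: nachostheorem} applied to the measure $\mu$ built with $\varpi = v_\theta$, noting that differentiability of $\partial\mathcal{B}$ at $v_\theta$ gives $J_\varrho = I_\theta$ $\mu$-a.s.\ by Corollary~\ref{cor: diff}.
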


Theorems~\ref{thm: random_hyperplanes} and \ref{thm: exceptional_set} follow from a stronger result. In Sections~\ref{sec: GG} and \ref{sec: coalesceG}, we prove that any subsequential limit $\mu$ defined as in Section~\ref{sec: mudef} is supported on geodesic graphs with properties 1-4 of Theorem~\ref{thm: random_hyperplanes}.

\begin{rem}
The finiteness of backward clusters in the graphs produced in the previous two theorems (see item 4 of the first and item 1(d) of the second) is related to nonexistence of bigeodesics. It shows that when constructing infinite geodesics using a certain limiting procedure, it is impossible for doubly infinite paths to arise.
\end{rem}

\subsection{Notation}

We denote the standard orthonormal basis vectors for \R2 by $\unitv_1$ and $\unitv_2.$  The translation operators $T_{\unitv_i},~ i = 1,2$
act on a configuration $\omega$ as follows: 
$\left(T_{\unitv_i} (\omega) \right)_{e'} = \omega_{e'-\unitv_i}.$ Under any of the assumptions laid out above, the measure $\mathbb{P}$ is invariant under these translations. Furthermore the passage times have a certain translation-covariance: for $i=1,2$,
\begin{equation}\label{eq: transcov}
\tau(x,S)(T_{\unitv_i}\omega) = \tau(x-\unitv_i,S-\unitv_i)(\omega)\ ,
\end{equation}
where $S-\unitv_i = \{ x-\unitv_i : x \in S\}$.

We shall need a function $g:\mathbb{R}^2 \to \mathbb{R}$ which describes the limiting shape $\mathcal{B}$. It is the norm whose closed unit ball is $\mathcal{B}$. There are many ways to define it; for instance one can use $g(x) = \inf\{\lambda > 0:  x/\lambda \in \mathcal{B}\}$. It follows from the shape theorem that under {\bf A1} or {\bf A2},
\[
\lim_{n \to \infty} \tau(0,nx)/n = g(x) \text{ for all } x \in \mathbb{R}^2, ~\mathbb{P}\text{-almost surely}\ .
\]
Furthermore, there is convergence in $L^1$:
\[
\lim_{n \to \infty} \mathbb{E}\tau(0,nx)/n = g(x) \text{ for all } x \in \mathbb{R}^2\ .
\]
In the case of {\bf A1} this follows from \cite[Lemma~3.2]{coxdurrett} and under {\bf A2} it can be derived from the shape theorem and \cite[Lemma~2.6]{Hoffman2} (the reader can also see a derivation in the appendix of \cite{gouere}). We denote the $\ell^1$ norm on $\mathbb{R}^2$ by $\|\cdot\|_1$ and the $\ell^2$ norm by $\| \cdot \|_2 .$ Since the limit shape is bounded and has nonempty interior, there are constants $0 < C_1, C_2 < \infty$ such that
\begin{equation}\label{eq: normequivalence}
C_1 \|x\|_2 \leq g(x) \leq C_2 \|x\|_2 \text{ for all } x \in \mathbb{R}^2\ .
\end{equation}

We recall the fact that under {\bf A1} or {\bf A2},
\begin{equation}\label{eq: finitesecondmoment}
\mathbb{E} \tau(x,y)^2 < \infty \text{ for all } x,y \in \mathbb{R}^2\ .
\end{equation}
This was proved in \cite[Lemma~3.1]{coxdurrett} assuming {\bf A1} and in the other case it follows directly from the fact that $\mathbb{E} \omega_e^{2+\e}<\infty$ for some $\e>0$.

We write $x \cdot y$ for the standard dot product between $x$ and $y$ in $\mathbb{R}^2$. 

\begin{center}
{\bf For the rest of the paper we assume A1 or A2.}
\end{center}

\subsection{Structure of the paper}
In the next section, we give basic properties of Busemann functions and geodesic graphs. In Section~\ref{sec: BID} we introduce Busemann increment configurations and construct probability measures on them. Next we reconstruct Busemann functions, and in Section~\ref{sec: limits} we prove a shape theorem for the reconstruction.  Section~\ref{sec: GG} begins the study of distributional limits $\mathbb{G}$ of geodesic graphs, where we show that all paths are asymptotically directed in a sector given by the reconstructed Busemann function. In Section~\ref{sec: coalesceG} we show coalescence of all paths in $\mathbb{G}$. We use all of these tools in Section~\ref{sec: proofs} to prove the main results of the paper.

\section{Busemann functions and geodesic graphs}

In this section we will give basic properties of Busemann functions and geodesic graphs. These will be carried over through weak limits to a space introduced in the next section.

\subsection{Busemann functions}

For any $S \subseteq \mathbb{R}^2$ and configuration $\omega$, we define the Busemann function $B_S : \Z2\times \Z2 \to \mathbb{R}$ as
\[
B_S(x,y) = \tau(x,S) - \tau(y,S)\ ,
\]
This function measures the discrepancy between travel times from $x$ and $y$ to $S$. We list below some basic properties of Busemann functions. One of the most interesting is the additivity property 1. It is the reason that the asymptotic shape for the Busemann function is a half space whereas the asymptotic shape for $\tau$ is a compact set.

\begin{prop}\label{prop: busemannprop1}
Let $S \subseteq \mathbb{R}^2$. The Busemann function $B_S$ satisfies the following properties $\mathbb{P}$-almost surely for $x,y,z \in \Z2$:
\begin{enumerate}
\item (Additivity)
\begin{equation}\label{eq: busemannadditivity}
B_S(x,y) = B_S(x,z) + B_S(z,y)\ .
\end{equation}
\item for $i=1,2$,
\begin{equation}\label{eq: busemanntranscov}
B_S(x,y)(T_{\unitv_i} \omega) = B_{S-\unitv_i}(x-\unitv_i,y-\unitv_i)(\omega)\ .
\end{equation}
Therefore the finite-dimensional distributions of $B_S$ obey a translation invariance:
\[
\left( B_S(x,y) \right) \underset{d}{=} \left(B_{S-\unitv_i}(x-\unitv_i,y-\unitv_i) \right)\ .
\]
\item
\begin{equation}\label{eq: bboundtau}
|B_S(x,y)| \leq \tau(x,y)\ .
\end{equation}
\end{enumerate}
\end{prop}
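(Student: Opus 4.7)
The plan is to prove all three items directly from the definition $B_S(x,y)=\tau(x,S)-\tau(y,S)$, the translation-covariance of $\tau$ stated in \eqref{eq: transcov}, and the subadditive bound \eqref{eq: subadditivity}. None of these steps appears to require more than algebra once we know that the point-to-set passage times in question are almost surely finite (which is immediate since $S$ is nonempty and the edge weights are almost surely finite under either \textbf{A1} or \textbf{A2}, together with \eqref{eq: finitesecondmoment} giving $\tau(x,y)<\infty$ a.s.).

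For additivity \eqref{eq: busemannadditivity}, I would simply expand the right-hand side: $B_S(x,z)+B_S(z,y)=[\tau(x,S)-\tau(z,S)]+[\tau(z,S)-\tau(y,S)]$, and the $\tau(z,S)$ terms cancel to yield $\tau(x,S)-\tau(y,S)=B_S(x,y)$. This is an a.s.\ identity on the event that $\tau(\cdot,S)$ is finite at the three points $x,y,z$, which has full measure.

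For the translation-covariance statement \eqref{eq: busemanntranscov}, the plan is to apply \eqref{eq: transcov} twice. Writing out $B_S(x,y)(T_{\unitv_i}\omega)=\tau(x,S)(T_{\unitv_i}\omega)-\tau(y,S)(T_{\unitv_i}\omega)$ and invoking \eqref{eq: transcov} for each term gives $\tau(x-\unitv_i,S-\unitv_i)(\omega)-\tau(y-\unitv_i,S-\unitv_i)(\omega)$, which is $B_{S-\unitv_i}(x-\unitv_i,y-\unitv_i)(\omega)$ by definition. The finite-dimensional distributional equality then follows because $\mathbb{P}$ is $T_{\unitv_i}$-invariant under either \textbf{A1} or \textbf{A2}, so pushing the identity forward under $T_{\unitv_i}$ leaves the joint law unchanged.

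For the inequality \eqref{eq: bboundtau}, I would apply \eqref{eq: subadditivity} in both directions: $\tau(x,S)\leq\tau(x,y)+\tau(y,S)$ yields $B_S(x,y)\leq\tau(x,y)$, while swapping the roles of $x$ and $y$ yields $-B_S(x,y)\leq\tau(x,y)$, and together these give $|B_S(x,y)|\leq\tau(x,y)$. There is essentially no obstacle here; the entire proposition is bookkeeping with the definitions and the two earlier properties of $\tau$, and the only thing to be careful about is ensuring that each of the passage times involved is finite almost surely so that the subtractions are well defined. I would handle this once at the start by restricting to the full-measure event on which $\tau(x,S)$, $\tau(y,S)$, $\tau(z,S)$, and $\tau(x,y)$ are all finite for the (countably many) points $x,y,z\in\mathbb{Z}^2$ under consideration.
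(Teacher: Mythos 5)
Your proof is correct and follows essentially the same route as the paper's: additivity by direct cancellation from the definition, translation covariance by applying \eqref{eq: transcov} to each term, and the bound \eqref{eq: bboundtau} by using subadditivity \eqref{eq: subadditivity} in both directions. The brief remark on restricting to the full-measure event where the relevant passage times are finite is a sensible precaution but does not change the argument.
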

\begin{proof}
The first property follows from the definition. The third is a consequence of subadditivity \eqref{eq: subadditivity} of $\tau(y,S)$. The second item follows from the statement \eqref{eq: transcov} for passage times.
\end{proof}

The last property we need regards the relation between geodesics and Busemann functions. Though it is simple, it will prove to be important later.
\begin{prop}\label{prop: busemannpassagetime}
Let $S \subseteq \mathbb{R}^2$ and $x \in \Z2$. If $\gamma$ is a geodesic from $x$ to $S$ and $y$ is a vertex of $\gamma$ then $B_S(x,y) = \tau(x,y)$.
\end{prop}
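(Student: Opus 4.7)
The plan is to unfold the definition $B_S(x,y) = \tau(x,S) - \tau(y,S)$ and show that the middle vertex $y$ of the geodesic $\gamma$ splits the total travel time cleanly, so that $\tau(x,S) = \tau(x,y) + \tau(y,S)$. Rearranging immediately yields $B_S(x,y) = \tau(x,y)$.

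More concretely, I would argue as follows. Write $\gamma = \gamma_1 \cup \gamma_2$, where $\gamma_1$ is the piece of $\gamma$ from $x$ to $y$ and $\gamma_2$ is the piece from $y$ to the endpoint of $\gamma$ in $\hat S$. As noted in the discussion preceding equation~\eqref{eq: hatS}, $\gamma_1$ is a geodesic from $x$ to $y$ and $\gamma_2$ is a geodesic from $y$ to $S$; this is a standard consequence of the fact that if $\tau(\gamma_1) > \tau(x,y)$ or $\tau(\gamma_2) > \tau(y,S)$, one could splice in a better path and contradict the optimality of $\gamma$. Hence $\tau(\gamma_1) = \tau(x,y)$ and $\tau(\gamma_2) = \tau(y,S)$, and since the weights along $\gamma$ sum additively,
\[
\tau(x,S) = \tau(\gamma) = \tau(\gamma_1) + \tau(\gamma_2) = \tau(x,y) + \tau(y,S).
\]
Subtracting $\tau(y,S)$ from both sides gives $\tau(x,S) - \tau(y,S) = \tau(x,y)$, i.e.\ $B_S(x,y) = \tau(x,y)$.

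There is really no obstacle here: the statement is essentially tautological once one accepts the observation (already used implicitly in the definition of the geodesic graph $\mathbb{G}_S$) that subpaths of geodesics are themselves geodesics. The only thing worth being careful about is that the endpoint of $\gamma$ lies in $\hat S$ rather than $S$ itself, but this is harmless because $\tau(x,S)$ is defined via the lattice rounding in \eqref{eq: hatS}, so the additivity of edge weights along $\gamma$ matches the definition of $\tau(x,S)$ exactly.
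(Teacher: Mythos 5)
Your proof is correct and follows the same line as the paper's: split the geodesic at $y$, use that subpaths of geodesics are geodesics, and invoke additivity of passage time along $\gamma$ to obtain $\tau(x,S) = \tau(x,y) + \tau(y,S)$. The paper phrases this more compactly via a single chain of equalities involving $\tau_\gamma$, but the underlying argument is identical.
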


\begin{proof}
Write $\tau_\gamma(x,y)$ for the passage time along $\gamma$ between $x$ and $y$. Since every segment of a geodesic itself a geodesic, $\tau(x,S)-\tau(y,S) = \tau_\gamma(x,S) - \tau_\gamma(y,S) = \tau_\gamma(x,y) = \tau(x,y)$.
\end{proof}

Using this proposition and additivity of the Busemann function we can relate $B_S(x,y)$ to coalescence. If $\gamma_x$ and $\gamma_y$ are geodesics from $x$ and $y$ to $S$ (respectively) and they meet at a vertex $z$ then $B_S(x,y) = \tau(x,z)-\tau(y,z)$. This is a main reason why Busemann functions are useful for studying coalescence of geodesics.

\subsection{Geodesic graphs}\label{subsec: GG}

For any $S \subseteq \Z2$ and configuration $\omega$, we denote the set of edges in all geodesics from a point $v \in \Z2$ to $S$ as $G_S(v)$. We regard each geodesic in $G_S(v)$ as a directed path, giving orientation $\langle x,y \rangle$ to an edge if $\tau(x,S) \geq \tau(y,S)$ (the direction in which the edge is crossed), and set $\vec{G}_S(v)$ to be the union of these directed edges. Let $\mathbb{G}_S(\omega)$ be the directed graph induced by the edges in $\cup_v \vec{G}_S(v)$. Last, define the configuration $\eta_S(\omega)$ of directed edges by
\[
\eta_S(\omega)(\langle x,y \rangle) = \begin{cases}
1 & \text{if }\langle x,y \rangle \in \vec{G}_S(v) \text{ for some } v \\
0 & \text{otherwise}
\end{cases}\ .
\]
For $S \subseteq \mathbb{R}^2$ we define $\eta_S(\omega)$ and $\mathbb{G}_S(\omega)$ using $\hat S$ as in \eqref{eq: hatS}.

\begin{prop}\label{prop: firstGG}
Let $S \subseteq \R2$. The graph $\mathbb{G}_S$ and the collection $(\eta_S)$ satisfy the following properties $\mathbb{P}$-almost surely.
\begin{enumerate}
\item Every finite directed path is a geodesic. It is a subpath of a geodesic ending in $S$.
\item If there is a directed path from $x$ to $y$ in $\mathbb{G}_S$ then $B_S(x,y) = \tau(x,y)$.
\item For $i=1,2$,
\begin{equation}\label{eq: GGtranscov}
\eta_S(e)(T_{\unitv_i}\omega) = \eta_{S-\unitv_i}(e-\unitv_i)(\omega)\ .
\end{equation}
Therefore the finite dimensional distributions of $\eta_S$ obey a translation invariance:
\[
(\eta_S(e)) \underset{d}{=} (\eta_{S-\unitv_i}(e-\unitv_i))\ .
\]
\end{enumerate}
\end{prop}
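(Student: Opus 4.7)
The plan is to prove item 1 first, deduce item 2 from it using Proposition~\ref{prop: busemannpassagetime}, and handle item 3 separately from the translation covariance \eqref{eq: transcov} of passage times.

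For item 1, I would start at the level of a single directed edge. If $\langle x,y\rangle \in \mathbb{G}_S$, then by definition there exist $v$ and a geodesic $\gamma$ from $v$ to $\hat{S}$ that uses this edge, with the orientation recording $\tau(x,S) \geq \tau(y,S)$. The subpath of $\gamma$ from $x$ onward is a geodesic from $x$ to $\hat{S}$ (subpaths of geodesics are geodesics), and its second vertex is $y$ (because the orientation was chosen so that $\tau$ decreases along the edge). Hence
\[
\tau(x,S) = \omega_{\{x,y\}} + \tau(y,S)\ .
\]
Summing this telescoping identity along any finite directed path $x_0 \to x_1 \to \cdots \to x_n$ in $\mathbb{G}_S$ gives
\[
\tau(x_0, S) - \tau(x_n, S) = \sum_{i=1}^n \omega_{\{x_{i-1},x_i\}}\ .
\]
Now concatenate the directed path with any geodesic from $x_n$ to $\hat{S}$ (which exists $\mathbb{P}$-a.s.\ by the shape theorem and boundedness of $\mathcal{B}$, as noted in Section~\ref{subsec: coalesce}). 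The resulting path from $x_0$ to $\hat S$ has total weight $\tau(x_0, S)$, so it is a geodesic from $x_0$ to $\hat{S}$. This both produces the extending geodesic claimed in item 1 and, since the initial segment $x_0, \ldots, x_n$ is a subpath of a geodesic, shows that this segment is itself a geodesic from $x_0$ to $x_n$.

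Item 2 is then an immediate consequence: the directed path from $x$ to $y$ extends to a geodesic from $x$ to $\hat S$ passing through $y$, so Proposition~\ref{prop: busemannpassagetime} gives $B_S(x,y) = \tau(x,y)$.

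For item 3, I would observe that the translation covariance of $\tau$ in \eqref{eq: transcov} implies that $\gamma$ is a geodesic from $v$ to $\hat S$ in $T_{\unitv_i}\omega$ if and only if $\gamma - \unitv_i$ is a geodesic from $v-\unitv_i$ to $\widehat{S-\unitv_i}$ in $\omega$; and orientations agree because the point-to-set passage times that determine them also transform covariantly. Edge-by-edge this gives \eqref{eq: GGtranscov}, which then yields the distributional invariance by pushing forward the $T_{\unitv_i}$-invariant law $\mathbb{P}$. I do not anticipate any serious obstacle; the only technical point is the a.s.\ existence of geodesics to $\hat{S}$ used in item 1, which is standard under {\bf A1} or {\bf A2}.
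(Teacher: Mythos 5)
Your proof is correct and the underlying idea (concatenating the directed path with a geodesic to $S$ and verifying passage times match) is the same as the paper's, but your structure is cleaner. The paper proves item~1 by induction on the number of edges: it maintains, for each prefix $\gamma_k$ of the directed path, a geodesic from some vertex $z$ to $S$ containing $\gamma_k$, and at each step splices in a new geodesic through $e_{k+1}$. You replace this induction with a single telescoping identity $\tau(x,S) = \omega_{\{x,y\}} + \tau(y,S)$ at the edge level, followed by a single concatenation with a geodesic from $x_n$ to $\hat S$. This yields the same conclusion (the directed path is an initial segment of a geodesic ending in $S$) more directly, and the derivations of items~2 and~3 (from Proposition~\ref{prop: busemannpassagetime} and from \eqref{eq: transcov}, respectively) match the paper's. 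One small point worth making explicit in the single-edge step: $\langle x,y\rangle \in \mathbb{G}_S$ means some geodesic to $S$ actually crosses the edge in the direction $x\to y$ (the orientation is the direction of crossing), which is what licenses the claim that the subpath from $x$ has second vertex $y$; the resulting identity $\tau(x,S) = \omega_{\{x,y\}}+\tau(y,S)$ is what drives the rest.
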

\begin{proof}
The third property follows from translation covariance of passage times \eqref{eq: transcov}. The second property follows from the first and Proposition~\ref{prop: busemannpassagetime}.

To prove the first, let $\gamma$ be a directed path in $\mathbb{G}_S$ and write the edges of $\gamma$ in order as $e_1,\ldots, e_n$. Write $J \subseteq \{1, \ldots, n\}$ for the set of $k$ such that the path $\gamma_k$ induced by $e_1, \ldots, e_k$ is a subpath of a geodesic from some vertex to $S$. We will show that $n \in J$. By construction of $\mathbb{G}_S$, the edge $e_1$ is in a geodesic from some point to $S$, so $1 \in J$. Now suppose that $k \in J$ for some $k < n$; we will show that $k+1 \in J$. Take $\sigma$ to be a geodesic from a point $z$ to $S$ which contains $\gamma_k$ as a subpath. Write $\sigma'$ for the portion of the path from $z$ to the far endpoint $v_k$ of $e_k$ (the vertex to which $e_k$ points). The edge $e_{k+1}$ is also in $\mathbb{G}_S$ so it is in a geodesic from some point to $S$. If we write $\hat \sigma$ for the piece of this geodesic from $v_k$ of $e_k$ to $S$, we claim that the concatenation of $\sigma'$ with $\hat \sigma$ is a geodesic from $z$ to $S$. To see this, write $\tau_{\tilde \gamma}$ for the passage time along a path $\tilde \gamma$:
\[
\tau(z,S) = \tau_\sigma(z,v_k) + \tau_\sigma(v_k,S) = \tau_{\sigma'}(z,v_k) + \tau_{\hat \sigma}(v_k,S)\ .
\]
The last equality holds since both the segment of $\hat \sigma$ from $v_k$ to $S$ and the segment of $\sigma$ from $v_k$ to $S$ are geodesics, so they have equal passage time. Hence $k+1 \in J$ and we are done.
\end{proof}

Note that each vertex $x \notin \hat S$ has out-degree at least 1 in $\mathbb{G}_S$. Furthermore it is possible to argue using part 1 of the previous proposition and the shape theorem that there are no infinite directed paths in $\mathbb{G}_S$. Since we will not use this result later, we omit the proof. Once we take limits of measures on such graphs later, infinite paths will appear.

If $\mathbb{P}$ has unique passage times, we can say more about the structure of $\mathbb{G}_S$. 
\begin{prop}\label{prop: secondGG}
Assume {\bf A1'} or {\bf A2'}. The following properties hold $\mathbb{P}$-almost surely.
\begin{enumerate}
\item Each vertex $x \notin \hat S$ has out-degree 1. Here $\hat S$ is defined as in \eqref{eq: hatS}.
\item Viewed as an undirected graph, $\mathbb{G}_S$ has no circuits.
\end{enumerate}
\end{prop}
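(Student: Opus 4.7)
Both parts hinge on unique passage times, which I will use in two guises: (i) any two distinct finite lattice paths have distinct passage times, and (ii) at most one edge carries weight zero (apply (i) to single-edge paths). A countable union of null sets puts both statements on a full-measure event. An immediate consequence of (i) is that for every $x \in \mathbb{Z}^2$ the geodesic from $x$ to $S$ is a.s.\ unique, since two distinct such geodesics would share passage time $\tau(x,S)$ but have distinct edge sets.

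For part (1), uniqueness of the geodesic from $x \notin \hat S$ to $S$ already gives at least one outgoing edge at $x$ in $\mathbb{G}_S$, namely its first edge. Conversely, if $\langle x,y\rangle \in \mathbb{G}_S$ is outgoing from $x$, Proposition~\ref{prop: firstGG}(1) extends it to a directed path in $\mathbb{G}_S$ from $x$ to $\hat S$ realizing $\tau(x,S)$. This path is a geodesic from $x$ to $S$, so by uniqueness it coincides with the canonical one and $\langle x,y\rangle$ is its first edge. Hence $x$ has out-degree exactly one.

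For part (2), I would first upgrade (1) to an out-degree bound valid at every vertex, including those inside $\hat S$. For any directed edge $\langle v,w\rangle \in \mathbb{G}_S$, applying Proposition~\ref{prop: busemannpassagetime} to a geodesic containing it yields $\omega_{\{v,w\}} = \tau(v,S) - \tau(w,S)$. When $v \in \hat S$ we have $\tau(v,S) = 0$, so any outgoing edge at $v$ carries weight zero; by (ii) at most one such edge exists, giving out-degree at most one at $v$ as well. Now suppose $\mathbb{G}_S$ contained an undirected circuit $x_0,\ldots,x_n = x_0$. Tag each $x_i$ as a source, sink, or pass-through depending on how its two circuit edges are oriented; equating in- and out-degree sums around the circuit,
\[
2(\#\text{sources}) + \#\text{pass-through} \;=\; n \;=\; 2(\#\text{sinks}) + \#\text{pass-through},
\]
forces $\#\text{sources} = \#\text{sinks}$. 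The universal out-degree-$\leq 1$ bound rules out sources, hence also sinks, so the circuit is a directed cycle. By Proposition~\ref{prop: firstGG}(1) it is then a geodesic of total weight $\tau(x_0,x_0) = 0$, forcing every cycle edge to have weight zero; since any cycle in $\mathbb{Z}^2$ has at least four edges, this violates (ii).

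The main subtle point is the extension of the out-degree bound to $\hat S$ vertices at the top of part (2): without it, a source on the circuit could conceivably sit inside $\hat S$ and evade the bound from part (1). The Busemann-type identity $\omega_{\{v,w\}} = \tau(v,S)-\tau(w,S)$ at an $\hat S$-vertex, combined with (ii), disposes of this cleanly; everything else is combinatorial bookkeeping.
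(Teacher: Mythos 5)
Your argument is sound and rests on the same engine as the paper's — unique passage times — but it is organized differently and in one place is actually more careful than the paper. For part (1), the paper does not invoke geodesic uniqueness directly; instead it shows that two outgoing edges at $x$ would yield two geodesics $\gamma_1,\gamma_2$ to $S$, argues that neither can return to $x$ (a zero-passage-time loop would, by the ergodic theorem, produce infinitely many zero-weight paths, contradicting unique passage times), and only then concludes distinct edge sets, hence a contradiction. Your route via ``the geodesic from $x$ to $S$ is unique'' skips exactly this step: you assert that two distinct geodesics must have distinct edge sets, but that requires ruling out non-self-avoiding geodesics, which is precisely what the paper's loop-removal/ergodic argument does. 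That is a genuine (if small and fixable) omission, and your preamble statement ``(i) any two distinct finite lattice paths have distinct passage times'' is a misstatement of the hypothesis (which concerns distinct \emph{edge sets}), although you use it correctly downstream. For part (2), you are in fact more complete than the paper: the paper's line ``Each vertex has out-degree 1'' quietly extends part (1) to vertices of $\hat S$, which part (1) does not cover. Your Busemann identity $\omega_{\{v,w\}}=\tau(v,S)-\tau(w,S)$ at $v\in\hat S$, forcing any outgoing edge there to have weight zero and hence bounding the out-degree, together with the source/sink count around a putative circuit, closes that gap cleanly; the rest (directed cycle has zero passage time, yet has $\geq 4$ edges, contradicting ``at most one zero-weight edge'') matches the paper's conclusion.
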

\begin{proof}
For the first property note that every vertex $x\notin \hat S$ has out-degree at least 1 because there is a geodesic from the vertex to $S$ and the first edge is directed away from $x$. Assuming $x$ has out-degree at least 2 then we write $e_1$ and $e_2$ for two such directed edges. By the previous proposition, there are two geodesics $\gamma_1$ and $\gamma_2$ from $x$ to $S$ such that $e_i \in \gamma_i$ for $i=1,2$. If either of these paths returned to $x$ then there would exist a finite path with passage time equal to 0. By the ergodic theorem there would then be infinitely many distinct paths with passage time 0 (with positive probability), contradicting unique passage times. This implies that $\gamma_1$ and $\gamma_2$ have distinct edge sets. However, they have the same passage time, again contradicting unique passage times.

For the second property suppose that there is a circuit in the undirected version of $\mathbb{G}_S$. Each vertex has out-degree 1, so this is actually a directed circuit and thus a geodesic. But then it has passage time zero, giving a contradiction as above.
\end{proof}

Property 2 implies that $\mathbb{G}_S$, viewed as an undirected graph, is a forest. It has more than one component if and only if $\hat S$ has size at least 2. We will see later that after taking limits of measures on these graphs, the number of components will reduce to 1.

\section{Busemann increment distributions}\label{sec: BID}

We are interested in taking limits of measures on Busemann functions and geodesic graphs. 
We will choose a one-parameter family of lines $L_\alpha = L + \alpha \mathbf{v}$ for $\mathbf{v}$ a normal vector to $L$ and consider the Busemann functions $B_{L_\alpha}(x,y)$. The main question is whether or not the limit
\begin{equation}\label{eq: newmanlimit}
\lim_{\alpha \to \infty} B_{L_\alpha}(x,y)
\end{equation}
exists for $x,y \in \mathbb{Z}^2$. If one could show this, then one could prove many results about FPP, for instance, that infinite geodesics with an asymptotic direction always exist. Under an assumption of uniformly positive curvature of the limit shape $\mathcal{B}$ and exponential moments for the common distribution of the $\omega_e$'s (in the case that $\mathbb{P}$ is a product measure) Newman \cite{Newman95} has shown the existence of this limit for Lebesgue-almost every unit vector $\mathbf{v}$.

We will try to overcome the difficulty of existence of limits \eqref{eq: newmanlimit} by enlarging the space to work with subsequential limits in a systematic way. This technique is inspired by work \cite{AD, ADNS} on ground states of short-range spin glasses.

\subsection{Definition of $\mu$}\label{sec: mudef}

We begin by assigning a space for our passage times. Let $\Omega_1 = \mathbb{R}^{\mathbb{Z}^2}$ be a copy of $\Omega$. A sample point in $\Omega_1$ we call $\omega$ as before. Our goal is to enhance this space to keep track of Busemann functions and geodesic graphs. We will take limits in a fixed direction, so for the remainder of this section, let $\varpi\in \partial \mathcal{B}$ and let $g_\varpi$ be any linear functional on $\mathbb{R}^2$ that takes its maximum on $\mathcal{B}$ at $\varpi$ with $g_\varpi(\varpi)=1$. The nullspace of $g_\varpi$ is then a translate of a supporting line for $\mathcal{B}$ at $\varpi$.
For $\alpha \in \mathbb{R}$, define 
\[
L_\alpha = \left\{ x \in \mathbb{R}^2: g_\varpi(x) = \alpha \right\}\ .
\]
For future reference, we note the inequality
\begin{equation}\label{eq: gvarpibound}
\text{for all } x \in \mathbb{R}^2,~ g_\varpi(x) \leq g(x)\ .
\end{equation}
It clearly holds if $x \neq 0$. Otherwise since $x/g(x) \in \mathcal{B}$, $1 \geq g_\varpi(x/g(x)) = g_\varpi(x)/g(x)$.

Given $\alpha \in \mathbb{R}$ and $\omega \in \Omega_1$, write $B_\alpha(x,y)(\omega) = B_{L_\alpha}(x,y)(\omega)$. Define the space $\Omega_2 = (\mathbb{R}^2)^{\mathbb{Z}^2}$ with the product topology and Borel sigma-algebra and the {\it Busemann increment configuration} $B_\alpha(\omega) \in \Omega_2$ as
\begin{align*}
B_\alpha(\omega) = \big( \, B_\alpha(v, v+\unitv_1),\, B_\alpha(v,v+\unitv_2)\, \big)_{v \in \Z2}\ .
\end{align*}

We also consider directed graphs of geodesics. These are points in a directed graph space $\Omega_3 = \{0,1\}^{\vec{\mathcal{E}}^2}$, where $\vec{\mathcal{E}}^2$ is the set of oriented edges $\langle x,y \rangle$ of $\mathbb{Z}^2$, and we use the product topology and Borel sigma-algebra. For $\eta \in \Omega_3$, write $\mathbb{G} = \mathbb{G}(\eta)$ for the directed graph induced by the edges $e$ such that $\eta(e) = 1$. Using the definition from the last section, set
\[
\eta_\alpha(\omega) = \eta_{L_\alpha}(\omega) \in \Omega_3 \text{ and } \mathbb{G}_\alpha(\omega) = \mathbb{G}(\eta_\alpha(\omega)) \text{ for } \alpha \in \mathbb{R}\ .
\]

Set $\widetilde \Omega = \Omega_1 \times \Omega_2 \times \Omega_3$, equipped with the product topology and Borel sigma-algebra; 
\[
(\omega, \Theta,\eta) = (\omega(e), \theta_1(x),\theta_2(x),\eta(f) : e \in \mathcal{E}^2, x \in \Z2,~f \in \vec{\mathcal{E}}^2)
\] 
denotes a generic element of the space $\widetilde \Omega.$ Define the map
\begin{equation}\label{eq: phidef}
\Phi_\alpha: \Omega_1 \longrightarrow \widetilde \Omega  \text{ by } \omega \mapsto (\omega, B_\alpha(\omega),\eta_\alpha(\omega))\ .
\end{equation}
Because $\Phi_\alpha$ is measurable, we can use it to push forward the distribution $\passage$ to a probability measure $\mu_\alpha$ on $\widetilde \Omega$. Given the family $(\mu_\alpha)$ and $n \in \mathbb{N}$, we define the empirical average
\begin{equation}\label{eq: munstar}
\mu_n^*\left( \cdot \right) := \frac{1}{n} \int_{0}^n \mu_\alpha \left( \cdot \right) \md \alpha. 
\end{equation}
To prove that this defines a probability measure, one must show that for each measurable $A \subseteq \widetilde \Omega$, the map $\alpha \mapsto \mu_\alpha(A)$ is Lebesgue-measurable. The proof is deferred to Appendix~\ref{sec: appendix}.

From $B_\alpha(x,y) \leq \tau(x,y)$, the sequence $\left( \mu_n^* \right)_{n=1}^{\infty}$ is seen to be tight and thus has a subsequential weak limit $\mu.$ We will call the marginal of $\mu$ on $\Omega_2$ a {\it Busemann increment distribution} and the marginal on $\Omega_3$ a {\it geodesic graph distribution}. It will be important to recall the Portmanteau theorem, a basic result about weak convergence. The following are equivalent if $(\nu_k)$ is a sequence of Borel probability measures on a metric space $X$:
\begin{align}
\lim_{k \to \infty} \nu_k &\to \nu \text{ weakly } \nonumber \\
\limsup_{k \to \infty} \nu_k(A) &\leq \nu(A) \text{ if } A \text{ is closed} \label{eq: kallenbergclosed}\\
\liminf_{k \to \infty} \nu_k(A) &\geq \nu(A) \text{ if } A \text{ is open} \label{eq: kallenbergopen}\ .
\end{align}
(See, for example, \cite[Theorem~3.25]{Kallenberg}.) Because $\widetilde \Omega$ is metrizable, these statements apply.

In this section and the next, we prove general properties about the measure $\mu$ and focus on the marginal on $\Omega_2$. In Sections~\ref{sec: GG} and \ref{sec: coalesceG} we study the marginal on $\Omega_3$ and in Section~\ref{sec: proofs} relate results back to the original FPP model. It is important to remember that $\mu$ depends among other things not only on $\varpi$, but on the choice of the linear functional $g_\varpi$. We will suppress mention of $\varpi$ in the notation. Furthermore we will use $\mu$ to represent the measure and also its marginals. For instance, if we write $\mu(A)$ for an event $A \subseteq \Omega_2$ we mean $\mu(\Omega_1 \times A \times \Omega_3)$.

\subsection{Translation invariance of $\mu$.}
We will show that $\mu$ inherits translation invariance from \passage. The natural translations $\tilde{T}_m,~m=1,2$ act on $\widetilde \Omega$ as follows:
\[
\left[ \tilde{T}_m (\omega,\Theta,\eta) \right](e,x,f) = \left( \omega_{e-\unitv_m}, \theta_1 (x - \unitv_m), \theta_2 (x - \unitv_m), \eta(f-\unitv_m) \right)\ .
\]
Here, for example, we interpret $e-\unitv_m$ for the edge $e= (y,z)$ as $(y-\unitv_m,z-\unitv_m)$.

\begin{lem}
\label{translate_1}
For any $\alpha \in \mathbb{R}$ and $m=1,2$, $\mu_\alpha \circ \tilde T_m = \mu_{\alpha + g_{\varpi}(\unitv_m)}$.

\begin{proof}
Let $A$ be a cylinder event for the space $\widetilde \Omega$ of the form
\[
A = \left\{ \omega_{e_i} \in \mathbf{B}_i, \theta_{r_j}(x_j) \in \mathbf{C}_j, \eta(f_k) = a_k : i = 1, \ldots, l, ~j = 1, \ldots, m, ~k=1, \ldots, n \right\}\ ,
\]
where each $\mathbf{B}_i, \mathbf{C}_j$ is a (real) Borel set with $a_k \in \{0,1\}$, each $r_j \in \{1,2\}$, and each $e_i \in \mathcal{E}^2, x_j \in \mathbb{Z}^2$ and $f_k \in \vec{\mathcal{E}}^2$.  We will show that for $m=1,2$,
\begin{equation}\label{eq: chazisagooddog}
\mu_\alpha \left(\tilde{T}_m^{-1} A\right) = \mu_{\alpha + g_{\varpi}(\unitv_m)}(A)\ .
\end{equation}
Such $A$ generate the sigma-algebra so this will imply the lemma. For $m \in \{1,2\}$,
\begin{equation}
\label{cylindershift}
\tilde{T}_m^{-1}(A) = \left\{\omega_{e_i - \unitv_m} \in \mathbf{B}_i, \theta_{r_j}(x_j-\unitv_m) \in \mathbf{C}_j, \eta(f_k-\unitv_m) = a_k \right\}\ . \nonumber
\end{equation}
Rewriting $\mu_\alpha(\cdot) = \mathbb{P}(\Phi_\alpha^{-1}(\cdot))$ and using the definition of $\Phi_\alpha$ \eqref{eq: phidef}, 
\[
\mu_\alpha(\tilde T_m^{-1}(A)) = \passage \left( \omega_{e_i-\unitv_m} \in \mathbf{B}_i, B_\alpha(x_j - \unitv_m, x_j - \unitv_m + \unitv_{r_j} ) \in \mathbf{C}_j, \eta_\alpha(f_k-\unitv_m)(\omega) = a_k \right)\ .
\]
Note that translation invariance of $\passage$ allows to shift the translation by $\unitv_m$ from the arguments of $\omega$, $B_\alpha$ and $\eta_\alpha$ to the position of the line $L_\alpha$.  We have equality in distribution:
\[
\omega_{e-\unitv_m} \underset{d}{=} \omega_e,~ B_\alpha(x - \unitv_m, y - \unitv_m) \underset{d}{=} B_\beta(x,y) \text{ and } \eta_\alpha(e-\unitv_m) \underset{d}{=} \eta_\beta(e)\ ,
\]
where $\beta=\alpha + g_\varpi (\unitv_m)$. In fact, using the translation covariance statements \eqref{eq: transcov}, \eqref{eq: busemanntranscov} and \eqref{eq: GGtranscov}, equality of the above sort holds for the joint distribution of the $\omega$'s, Busemann increments and graph variables appearing in the event $A.$ This proves \eqref{eq: chazisagooddog}.
\end{proof}
\end{lem}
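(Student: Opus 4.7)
The plan is to verify the identity on a generating family of cylinder events and conclude by a $\pi$-$\lambda$ argument. Cylinder events of the form
\[
A = \{\omega_{e_i}\in\mathbf{B}_i,\; \theta_{r_j}(x_j)\in\mathbf{C}_j,\; \eta(f_k)=a_k\}
\]
generate the product Borel $\sigma$-algebra on $\widetilde\Omega$, so it suffices to check
$\mu_\alpha(\tilde T_m^{-1}A) = \mu_{\alpha + g_\varpi(\unitv_m)}(A)$ for each such $A$.

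First I would unpack $\tilde T_m^{-1}(A)$: the action of $\tilde T_m$ only shifts every vertex/edge label by $-\unitv_m$ and leaves the line $L_\alpha$ untouched, so $\tilde T_m^{-1}(A)$ is the cylinder event obtained by replacing $e_i,x_j,f_k$ by $e_i-\unitv_m,x_j-\unitv_m,f_k-\unitv_m$. Using $\mu_\alpha=\mathbb{P}\circ\Phi_\alpha^{-1}$ with the definition \eqref{eq: phidef} of $\Phi_\alpha$ then rewrites $\mu_\alpha(\tilde T_m^{-1}A)$ as a joint $\mathbb{P}$-probability involving $\omega_{e_i-\unitv_m}$, $B_{L_\alpha}(x_j-\unitv_m,x_j-\unitv_m+\unitv_{r_j})(\omega)$, and $\eta_{L_\alpha}(f_k-\unitv_m)(\omega)$.

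The key step is to trade the $-\unitv_m$ in the spatial arguments for a $+\unitv_m$ shift in the line. The translation-covariance identities \eqref{eq: transcov}, \eqref{eq: busemanntranscov}, and \eqref{eq: GGtranscov} hold pointwise in $\omega$ and read, jointly,
\[
\omega_{e-\unitv_m}(\omega) = \omega_e(T_{\unitv_m}\omega),\quad B_{L_\alpha}(x-\unitv_m,y-\unitv_m)(\omega)=B_{L_\alpha+\unitv_m}(x,y)(T_{\unitv_m}\omega),
\]
\[
\eta_{L_\alpha}(f-\unitv_m)(\omega)=\eta_{L_\alpha+\unitv_m}(f)(T_{\unitv_m}\omega),
\]
so the whole integrand is an event on $T_{\unitv_m}\omega$ with $L_\alpha$ replaced by $L_\alpha+\unitv_m$. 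Translation invariance of $\mathbb{P}$ under $T_{\unitv_m}$ then erases the $T_{\unitv_m}$. Finally, linearity of $g_\varpi$ gives $L_\alpha+\unitv_m=\{x+\unitv_m : g_\varpi(x)=\alpha\} = L_{\alpha+g_\varpi(\unitv_m)}$, and the resulting cylinder event is precisely the one defining $\mu_{\alpha+g_\varpi(\unitv_m)}(A)$.

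The only obstacle is bookkeeping: the three covariance identities must be applied \emph{simultaneously} to a single configuration $\omega$, so that the joint law (not just each marginal) transforms correctly. Since each identity is pointwise in $\omega$, this coupling is automatic, and the argument reduces to a mechanical substitution followed by one application of translation invariance of $\mathbb{P}$.
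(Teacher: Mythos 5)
Your proposal is correct and follows essentially the same approach as the paper: pass to cylinder events, unpack $\tilde T_m^{-1}(A)$, use $\mu_\alpha=\mathbb{P}\circ\Phi_\alpha^{-1}$, then trade the spatial shift for a shift of the line $L_\alpha$ via the pointwise covariance identities \eqref{eq: transcov}, \eqref{eq: busemanntranscov}, \eqref{eq: GGtranscov} together with translation invariance of $\mathbb{P}$. The one improvement is that you make the pointwise (as opposed to merely distributional) nature of the covariances explicit, which cleanly handles the joint transformation of all three coordinates at once; the paper gestures at this with the remark ``equality of the above sort holds for the joint distribution,'' so your version is a slightly more careful write-up of the same argument.
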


\begin{prop}
	$\mu$ is invariant under the translations $\tilde{T}_m$, $m=1,2$.
\end{prop}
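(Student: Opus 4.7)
The plan is to leverage Lemma~\ref{translate_1}, which says that the pushforward $\mu_\alpha \circ \tilde T_m$ is just another measure of the same family, namely $\mu_{\alpha + g_\varpi(\unitv_m)}$. Composing translation with the Cesàro-type average $\mu_n^*$ should then produce a measure that differs from $\mu_n^*$ only by an integral over an interval whose length is the \emph{fixed} constant $|g_\varpi(\unitv_m)|$, divided by $n$. Passing this difference to zero and then to the weak limit should give the invariance.

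Concretely, set $c = g_\varpi(\unitv_m)$. Using Lemma~\ref{translate_1} together with a change of variables,
\[
\mu_n^* \circ \tilde T_m = \frac{1}{n}\int_0^n \mu_{\alpha+c}\, \md\alpha = \frac{1}{n}\int_{c}^{n+c} \mu_\beta\, \md\beta .
\]
So $\mu_n^* \circ \tilde T_m$ and $\mu_n^*$ are averages over intervals whose symmetric difference has Lebesgue measure at most $2|c|$. For any bounded measurable $A \subseteq \widetilde\Omega$, this gives $|(\mu_n^* \circ \tilde T_m)(A) - \mu_n^*(A)| \leq 2|c|/n$, so the two sequences have the same weak limit along any common subsequence.

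Next I would pass to the weak limit. Suppose $\mu_{n_k}^* \to \mu$ weakly along the chosen subsequence. For any bounded continuous $f$ on $\widetilde\Omega$, the function $f \circ \tilde T_m$ is also bounded and continuous (the coordinate shift $\tilde T_m$ is continuous in the product topology on $\widetilde\Omega$), so
\[
\int f\, \md(\mu_{n_k}^* \circ \tilde T_m) = \int (f \circ \tilde T_m)\, \md\mu_{n_k}^* \longrightarrow \int (f \circ \tilde T_m)\, \md\mu = \int f\, \md(\mu \circ \tilde T_m).
\]
Combined with $|\int f\, \md(\mu_{n_k}^* \circ \tilde T_m) - \int f\, \md\mu_{n_k}^*| \leq 2|c|\,\|f\|_\infty/n_k \to 0$ and $\int f\, \md\mu_{n_k}^* \to \int f\, \md\mu$, we conclude $\int f\, \md\mu = \int f\, \md(\mu \circ \tilde T_m)$ for every bounded continuous $f$, hence $\mu \circ \tilde T_m = \mu$.

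I do not anticipate a serious obstacle: continuity of the shift in the product topology is immediate, the correction term from the change of variables is deterministic and of bounded size, and Lemma~\ref{translate_1} does all the real work. The only point to be a bit careful about is invoking the right formulation of weak convergence (convergence of integrals against bounded continuous functions) rather than the Portmanteau statements \eqref{eq: kallenbergclosed}--\eqref{eq: kallenbergopen}, since here we genuinely need to commute a continuous map with the limit.
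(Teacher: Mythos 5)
Your proof is correct and follows essentially the same route as the paper: both apply Lemma~\ref{translate_1} to identify $\mu_n^* \circ \tilde T_m$ as an average over a shifted interval, bound the resulting discrepancy by a quantity of order $1/n$, and use continuity of $\tilde T_m$ to pass the translation through the weak limit. The only cosmetic difference is that the paper organizes the final step as a three-$\epsilon$ estimate against a fixed continuous $f$, whereas you phrase it directly as convergence of integrals; the substance is identical.
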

\begin{proof}
	Let $f$ be a continuous function (bounded by $D \geq 0$) on the space $\widetilde \Omega,$ and fix $\epsilon > 0.$  Choose an increasing sequence $(n_k)$ such that $\mu_{n_k}^* \to \mu$ weakly as $k \to \infty$. We can then find $k_0$ such that
$|\mu(f) - \mu_{n_k}^*(f)| < \epsilon/3$ for $k > k_0.$  
By Lemma~\ref{translate_1}, $\mu_\alpha \circ \tilde T_m = \mu_{\alpha + g_{\varpi}(\unitv_m)}$ for $m=1,2$. Therefore 
\begin{align*}
	\left[\mu_{n_k}^* \circ \tilde{T}_m\right] \left(f\right) &= \frac{1}{n_k}\int_{g_{\varpi}(\unitv_m)}^{n_k+g_{\varpi}(\unitv_m)} \mu_\alpha \left(f \right)  \md \alpha\\
	\Rightarrow \left| \left[\mu_{n_k}^* \circ \tilde{T}_m\right] \left(f\right) - \mu_{n_k}^* \left(f\right) \right|  &\leq
		\frac{1}{n_k}\left| \int_{0}^{g_{\varpi}(\unitv_m)} \mu_\alpha \left(f \right)  \md \alpha \right| +  \frac{1}{n_k} \left| \int_{n_k}^{n_k+g_{\varpi}(\unitv_m)} \mu_\alpha \left(f \right)  \md \alpha \right|\\
	&\leq \frac{2 g_{\varpi}(\unitv_m)D}{n_k} \rightarrow 0 \text{ as } k \to \infty\ .
\end{align*}
As $\tilde T_m$ is a continuous on $\widetilde \Omega$, $(\mu_{n_k}^* \circ \tilde{T}_m) $ converges weakly to $\mu \circ \tilde{T}_m,$ so there exists $k_1>k_0$ such that 
$|\mu\circ \tilde T_m(f) - \mu_{n_k}^* \circ \tilde T_m (f)| < \epsilon/3$ for all $k > k_1,$ and $k_2>k_1$ with $2g_{\varpi}(\unitv_m)D/n_{k_2} < \epsilon/3.$  So $|\mu(f) - \mu \circ \tilde{T}_m (f)|  < \epsilon \text{ for all } \e>0$, giving $\mu = \mu \circ \tilde T_m$.
\end{proof}

\subsection{Reconstructed Busemann functions}
We wish to reconstruct an ``asymptotic Busemann function" $f: \Z2 \rightarrow \mathbb{R}$ by summing the Busemann increments of $\Theta \in \Omega_2$. That $\Theta$ is almost surely curl-free allows the construction to proceed independent of the path we sum over. For this we need some definitions.

Given $\Theta \in \Omega_2$, $x \in \mathbb{Z}^2$ and $z \in \mathbb{Z}^2$ with $\|z\|_1=1$ we set $\theta(x,z) = \theta(x,z)(\Theta)$ equal to
\[
\theta(x,z) = \begin{cases}
\theta_1(x) & z = \unitv_1 \\
\theta_2(x) & z= \unitv_2 \\
-\theta_1(x-\unitv_1) & z=-\unitv_1 \\
-\theta_2(x-\unitv_2) & z = -\unitv_2
\end{cases}\ .
\]
For any finite lattice path $\gamma$ we write its vertices in order as $x_1, \ldots, x_n$ and set
\[
f(\gamma) = f(\gamma)(\Theta) = \sum_{i=1}^{n-1} \theta(x_i, x_{i+1}-x_i)\ .
\]

\begin{lem}\label{lem: curlfree}
With $\mu$-probability one, $f$ vanishes on all circuits:
\[
\mu \left( f(\gamma)=0 \text{ for all circuits } \gamma \right) = 1\ .
\]
\end{lem}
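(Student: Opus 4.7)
The plan is to leverage the additivity of the Busemann function $B_\alpha$ (Proposition~\ref{prop: busemannprop1}) at the level of each $\mu_\alpha$ and then propagate the conclusion to the weak limit $\mu$ via the Portmanteau theorem.

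First I would verify the statement at fixed $\alpha$. Under the pushforward $\mu_\alpha = \mathbb{P} \circ \Phi_\alpha^{-1}$, one has $\theta_i(x) = B_\alpha(x, x+\unitv_i)$ for $i=1,2$, and from the antisymmetry $B_\alpha(y,x) = -B_\alpha(x,y)$ (immediate from the definition $B_S(x,y) = \tau(x,S)-\tau(y,S)$) one checks that $\theta(x,-\unitv_i) = -\theta_i(x-\unitv_i) = B_\alpha(x, x-\unitv_i)$. Hence for any finite lattice path $\gamma$ with vertex sequence $x_1, \ldots, x_n$, the additivity property \eqref{eq: busemannadditivity} yields
\[
f(\gamma) = \sum_{i=1}^{n-1} B_\alpha(x_i, x_{i+1}) = B_\alpha(x_1, x_n)
\]
by telescoping. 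When $\gamma$ is a circuit, $x_1 = x_n$ and so $f(\gamma) = B_\alpha(x_1,x_1) = 0$ identically on $\Omega_1$. This gives $\mu_\alpha\bigl(f(\gamma) = 0\bigr) = 1$ for every $\alpha$, and averaging over $\alpha \in [0,n]$ yields $\mu_n^*\bigl(f(\gamma) = 0\bigr) = 1$ for every $n$ and every fixed circuit $\gamma$.

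Next I would transfer the conclusion to $\mu$. For each fixed circuit $\gamma$, the map $f(\gamma) : \widetilde \Omega \to \mathbb{R}$ is a continuous function of finitely many coordinates of $\Theta$, so the event $\{f(\gamma) = 0\}$ is closed in $\widetilde \Omega$. Choosing a subsequence $(n_k)$ along which $\mu_{n_k}^* \to \mu$ weakly, the closed-set Portmanteau inequality \eqref{eq: kallenbergclosed} gives
\[
\mu\bigl(f(\gamma) = 0\bigr) \geq \limsup_{k \to \infty} \mu_{n_k}^*\bigl(f(\gamma) = 0\bigr) = 1.
\]
Since there are only countably many finite circuits in $\mathbb{Z}^2$, countable additivity over all such $\gamma$ preserves full measure and delivers the claim.

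I do not expect any serious obstacle: the argument is essentially a direct transfer of the exact additivity enjoyed at each finite $\alpha$ to the weak limit. The only points requiring care are (i) recognizing that $\theta(x,z)$ really coincides with $B_\alpha(x,x+z)$ under $\mu_\alpha$ (so that telescoping applies) and (ii) using that $\{f(\gamma)=0\}$ is closed in the product topology on $\widetilde\Omega$, which is automatic from the finite dependence of $f(\gamma)$ on coordinates of $\Theta$.
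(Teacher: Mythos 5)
Your proposal is correct and follows essentially the same route as the paper's proof: push the exact additivity of $B_\alpha$ forward to show $\mu_\alpha(f(\gamma)=0)=1$ for each circuit $\gamma$, average to get $\mu_n^*(f(\gamma)=0)=1$, pass to the weak limit via the closed-set Portmanteau inequality since $f(\gamma)$ is continuous (finitely many coordinates), and finish by countability of circuits. The only difference is that you spell out the identification $\theta(x,z)=B_\alpha(x,x+z)$ and the telescoping explicitly, which the paper leaves implicit.
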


\begin{proof}
Pick a circuit $\gamma$ and let $A\subseteq \widetilde \Omega_2$ denote the event $\{ \Theta : f(\gamma) = 0\}$. Choose an increasing sequence $(n_k)$ such that $\mu_{n_k}^* \to \mu$ weakly. For fixed $\gamma$, $f(\gamma)$ is a continuous function on $\widetilde \Omega$, so the event $A$ is closed, giving $\mu(A) \geq \limsup_{k} \mu_{n_k}^*(A)$ by \eqref{eq: kallenbergclosed} . However, for each $\alpha$, by additivity of $B_\alpha(\cdot,\cdot)$ (see \eqref{eq: busemannadditivity}),
\[
\mu_\alpha(A) = \mathbb{P}\left(\sum_{i=1}^n B_\alpha(x_i,x_{i+1}) = 0\right) = 1\ .
\]
Thus $\mu_n^*(A) = 1$ for all $n$ and $\mu(A)=1$. There are countably many $\gamma$'s so we are done.
\end{proof}

Using the lemma we may define the reconstructed Busemann function. Fix a deterministic family of finite paths $\{\gamma_{x,y}\}$, one for each pair $(x,y) \in \mathbb{Z}^2$ and define 
\[
f(x,y) = f(x,y)(\Theta) := f(\gamma_{x,y})\ .
\] 
Although we use fixed paths $\gamma_{x,y}$, this is only to ensure that $f$ is a continuous function on $\widetilde \Omega$. Actually, for any $\Theta$ in the $\mu$-probability one set of Lemma~\ref{lem: curlfree} and vertices $x,y \in \mathbb{Z}^2$ we could equivalently define $f(x,y) = f(\gamma)$, where $\gamma$ is any finite lattice path from $x$ to $y$. To see that it would then be well-defined (that is, only a function of $x,y$ and the configuration $\Theta$) is a standard argument. If we suppose that $\gamma_1$ and $\gamma_2$ are finite lattice paths from $x$ to $y$ and $\Theta$ is given as above, the concatenation of $\gamma_1$ with $\gamma_2$ (traversed in the opposite direction) is a circuit and thus has $f$-value zero. However, by definition, this is the difference of $f(\gamma_1)$ and $f(\gamma_2)$ and proves the claim.


We now give some properties about asymptotic Busemann functions that come over from the original model. The third says that $f$ retains translation covariance. This will allow us to prove the existence of almost-sure limits using the ergodic theorem in the next section.
\begin{prop}
\label{bd_f_by_tau_prop}
The reconstructed Busemann function satisfies the following properties for $x,y,z \in \Z2$.
\begin{enumerate}
\item 
\begin{equation}\label{eq: additivity}
f(x,y) + f(y,z) = f(x,z) ~\mu\text{-almost surely}\ . 
\end{equation}
\item For $m=1,2$ 
\begin{equation}\label{eq: ftranslation}
f(x,y)(\tilde T_m \Theta) = f(x-\unitv_m,y-\unitv_m)(\Theta) ~\mu\text{-almost surely}\ .
\end{equation}
\item \begin{equation}\label{eq: continuity}
f(x,y):\widetilde \Omega \to \mathbb{R} \text{ is continuous}\ .
\end{equation}
\item $f$ is bounded by $\tau$:
\begin{equation}\label{eq: fboundtau}
|f(x,y)| \leq \tau(x,y)) ~\mu\text{-almost surely}\ .
\end{equation}
\end{enumerate}
\end{prop}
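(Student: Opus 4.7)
The plan is to handle items 1--3 directly from the definitions together with Lemma~\ref{lem: curlfree}, and to obtain item 4 by passing the pointwise bound $|B_\alpha(x,y)|\le \tau(x,y)$ through the weak limit $\mu_{n_k}^*\to \mu$ via the Portmanteau theorem. The only step requiring real care is the semi-continuity check in item 4.

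For item 1, the concatenation of $\gamma_{x,y}$, $\gamma_{y,z}$ and the reverse of $\gamma_{x,z}$ is a circuit; since the definition of $\theta(x,z)$ negates contributions when an edge is traversed backward, its $f$-value equals $f(x,y)+f(y,z)-f(x,z)$, which vanishes $\mu$-a.s.\ by Lemma~\ref{lem: curlfree}. For item 2, unwinding the definition of $\tilde T_m$ on $\Omega_2$ gives $\theta(x,z)(\tilde T_m\Theta)=\theta(x-\unitv_m,z)(\Theta)$ for every unit step $z$ (the cases $z=\pm\unitv_i$ follow from the four-case definition of $\theta$), so summing along the fixed path $\gamma_{x,y}$ yields $f(\gamma_{x,y})(\tilde T_m\Theta)=f(\gamma_{x,y}-\unitv_m)(\Theta)$; applying Lemma~\ref{lem: curlfree} once more to the single circuit formed by $\gamma_{x,y}-\unitv_m$ and the reverse of $\gamma_{x-\unitv_m,y-\unitv_m}$ identifies this quantity, $\mu$-a.s., with $f(x-\unitv_m,y-\unitv_m)(\Theta)$. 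Item 3 is immediate, since $f(x,y)=f(\gamma_{x,y})$ is a finite signed sum of coordinate projections $\theta_i(v)$, each of which is continuous on $\widetilde\Omega$ in the product topology.

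For the pre-limit half of item 4, observe that under $\mu_\alpha=\mathbb{P}\circ\Phi_\alpha^{-1}$ the entries of $\Theta$ are genuine Busemann increments, $\theta_i(x)(\Theta)=B_\alpha(x,x+\unitv_i)(\omega)$. By additivity \eqref{eq: busemannadditivity} of the true Busemann function, the sum defining $f(x,y)(\Theta)$ telescopes to $B_\alpha(x,y)(\omega)$ $\mathbb{P}$-a.s., and then \eqref{eq: bboundtau} gives $\mu_\alpha(A)=1$ for every $\alpha$, where $A=\{(\omega,\Theta,\eta):|f(x,y)(\Theta)|\le\tau(x,y)(\omega)\}$. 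Averaging over $\alpha$ shows $\mu_n^*(A)=1$ for every $n$.

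The hard part is verifying that $A$ is closed so that the Portmanteau direction \eqref{eq: kallenbergclosed} applies along the subsequence $\mu_{n_k}^*\to\mu$. The function $|f(x,y)|$ is continuous by item 3, while $\tau(x,y)(\omega)=\inf_{\gamma:x\to y}\sum_{e\in\gamma}\omega_e$ is an infimum over countably many continuous functions of $\omega$, hence upper semi-continuous on $\Omega_1$ and therefore on $\widetilde\Omega$. Consequently $|f(x,y)|-\tau(x,y)$ is lower semi-continuous, $A=\{|f|-\tau\le 0\}$ is closed, and Portmanteau yields $\mu(A)\ge \limsup_k\mu_{n_k}^*(A)=1$. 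The direction of the inequality is precisely what makes the argument go through: an inequality in the opposite sense would need $\tau$ to be lower semi-continuous, which it is not in general.
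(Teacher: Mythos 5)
Your argument is correct and follows the paper's approach for all four items, supplying the details the paper leaves implicit (the explicit circuits for items 1--2, the telescoping under $\mu_\alpha$, the Portmanteau step). The one place where you genuinely improve on the paper is the semi-continuity check in item 4: the paper asserts that $|f(x,y)|-\tau(x,y)$ is \emph{continuous}, but this is false, since $\tau(x,y)=\inf_{\gamma:x\to y}\tau(\gamma)$ is only upper semi-continuous in the product topology on $\Omega_1$. (Indeed, for any $\omega$ and any basic open neighborhood, which constrains only finitely many coordinates, one can find a path from $x$ to $y$ avoiding those finitely many edges and set its weights to $0$, driving $\tau$ to zero while remaining in the neighborhood; so $\tau$ is not lower semi-continuous.) Your observation that upper semi-continuity of $\tau$, hence lower semi-continuity of $|f|-\tau$, is exactly what is needed to conclude $\{|f|-\tau\le 0\}$ is closed, and that an inequality in the other direction would require lower semi-continuity of $\tau$, is the right diagnosis and repairs the paper's overclaim without changing the substance of the proof.
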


\begin{proof}
The first two properties follow from path-independence of $f$ and the third holds because $f$ is a sum of finitely many Busemann increments, each of which is a continuous function. We show the fourth property. For $x,y \in \Z2$, the event
\[
\{(\omega,\Theta) : |f(x,y)(\Theta)| - \tau(x,y)(\omega) \leq 0\}
\]
is closed because $|f(x,y)|-\tau(x,y)$ is continuous. For every $\alpha$, \eqref{eq: bboundtau} gives $|B_\alpha(x,y)| \leq \tau(x,y)$ with $\mathbb{P}$-probability one, so the above event has $\mu_\alpha$-probability one. Taking limits and using \eqref{eq: kallenbergclosed}, $\mu(|f(x,y)(\Theta)| \leq \tau(x,y)(\omega)) = 1$.
\end{proof}

\subsection{Expected value of $f$}
In this section we compute $\mathbb{E}_\mu f(0,x)$ for all $x \in \mathbb{Z}^2$. The core of our proof is a argument from Hoffman \cite{Hoffman2}, which was developed using an averaging argument due to Garet-Marchand \cite{GM}. The presentation we give below is inspired by that of Gou\'er\'e \cite[Lemma~2.6]{gouere}. In fact, the proof shows a stronger statement. Without need for a subsequence,
\[
\mathbb{E}_{\mu_{n}^*}f(0,x) \to g_\varpi(x)\ .
\]

\begin{thm}\label{thm: expected_value}
For each $x \in \mathbb{Z}^2$, $\mathbb{E}_\mu f(0,x) = g_\varpi(x)$.
\end{thm}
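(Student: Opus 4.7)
The plan is to (i) evaluate $\mathbb{E}_{\mu_n^*}f(0,x)$ in closed form in terms of $h(\alpha) := \mathbb{E}\tau(0,L_\alpha)$, (ii) use the shape theorem to show its $n\to\infty$ limit equals $g_\varpi(x)$, and (iii) push this through the weak convergence $\mu_{n_k}^* \to \mu$ via a uniform-integrability argument.

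For step (i), under $\mu_\alpha$ each coordinate $\theta_i(v)$ is identically $B_\alpha(v,v+\unitv_i)$, so path-independence of $f$ combined with additivity of $B_\alpha$ (eq.~\eqref{eq: busemannadditivity}) gives $\mathbb{E}_{\mu_\alpha}f(0,x) = \mathbb{E}[\tau(0,L_\alpha)] - \mathbb{E}[\tau(x,L_\alpha)]$. Translation invariance of $\passage$ together with the identity $L_\alpha - x = L_{\alpha - g_\varpi(x)}$ (valid because $x \in \mathbb{Z}^2$) yields $\mathbb{E}\tau(x,L_\alpha) = h(\alpha - g_\varpi(x))$; hence, after integrating in $\alpha$ and shifting,
\[
\mathbb{E}_{\mu_n^*}f(0,x) \;=\; \frac{1}{n}\int_{n - g_\varpi(x)}^{n} h(\alpha)\,\md\alpha \;-\; \frac{1}{n}\int_{-g_\varpi(x)}^{0} h(\alpha)\,\md\alpha.
\]

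For step (ii), I would show $h(\alpha)/\alpha \to 1$ as $\alpha \to \infty$. The lower bound follows from \eqref{eq: gvarpibound}: every $y \in L_\alpha$ has $g(y) \geq g_\varpi(y) = \alpha$, so the shape theorem gives $\tau(0,L_\alpha) \geq (1-\e)\alpha$ a.s.\ eventually, and Fatou yields the expectation bound. The matching upper bound uses $\alpha\varpi \in L_\alpha$ with $g(\alpha\varpi) = \alpha$, so $h(\alpha) \leq \mathbb{E}\tau(0,\alpha\varpi)$, and the $L^1$ shape convergence $\mathbb{E}\tau(0,n\varpi)/n \to g(\varpi) = 1$ recalled after \eqref{eq: normequivalence} closes the bound. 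This same envelope makes the first integral above tend to $g_\varpi(x)$ after dividing by $n$, while the second vanishes since $h$ is bounded on any compact interval.

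For step (iii), since $f(0,x)$ under $\mu_\alpha$ agrees identically with $B_\alpha(0,x)$, the bound \eqref{eq: bboundtau} gives $|f(0,x)| \leq \tau(0,x)$ $\mu_\alpha$-a.s., whence
\[
\sup_n \mathbb{E}_{\mu_n^*}\bigl[f(0,x)^2\bigr] \;\leq\; \mathbb{E}_\passage\bigl[\tau(0,x)^2\bigr] \;<\; \infty
\]
by \eqref{eq: finitesecondmoment}. A uniform $L^2$ bound yields uniform integrability; combined with weak convergence of $\mu_{n_k}^*$ and the continuity of $f(0,x)$ (eq.~\eqref{eq: continuity}), a standard truncation argument (approximate $f(0,x)$ by the bounded continuous $(f(0,x)\wedge M)\vee(-M)$ and control the tail by $\mathbb{E}\tau(0,x)\mathbf{1}_{\tau(0,x)>M}$ uniformly in $k$) gives $\mathbb{E}_{\mu_{n_k}^*}f(0,x) \to \mathbb{E}_\mu f(0,x)$, so $\mathbb{E}_\mu f(0,x) = g_\varpi(x)$.

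The main technical hurdle is really step (ii): proving the ``shape theorem for lines'' $h(\alpha)/\alpha \to 1$ with enough uniformity to take the integral average and interchange limits. Bounding $h(\alpha) \leq \mathbb{E}\tau(0,\alpha\varpi)$ and appealing to the $L^1$ shape theorem are the essential inputs.
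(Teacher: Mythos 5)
Your proposal is correct and follows essentially the same route as the paper's proof: you evaluate $\mathbb{E}_{\mu_n^*}f$ explicitly by translation covariance of the point-to-line passage time, reduce the limit to a ``shape theorem for lines'' ($h(\alpha)/\alpha\to 1$, which is the paper's Lemma~3.5), and pass the limit through the weak convergence $\mu_{n_k}^*\to\mu$ via the uniform $L^2$ bound from $|f(0,x)|\le\tau(0,x)$ and \eqref{eq: finitesecondmoment}. The only cosmetic difference is that the paper works with $f(-x,0)$ and shifts the integration variable in the other direction, landing on an integral over $[n,n+g_\varpi(x)]$ rather than $[n-g_\varpi(x),n]$, but the dominated-convergence step to identify the limit of that boundary integral is the same.
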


\begin{proof}
We will use an elementary lemma that follows from the shape theorem.
\begin{lem}\label{lem: pointtoplane}
The following convergence takes place almost surely and in $L^1(\mathbb{P})$:
\[
\frac{\tau(0,L_\alpha)}{\alpha} \to 1 \text{ as } \alpha \to \infty\ .
\]
\end{lem}

\begin{proof}
Since $\alpha \varpi \in L_\alpha$,
\[
\limsup_{\alpha \to \infty} \frac{\tau(0,L_\alpha)}{\alpha} \leq \lim_{\alpha \to \infty} \frac{\tau(0,\alpha \varpi)}{\alpha} = 1\ .
\]
On the other hand, given $\e>0$ and any $\omega$ for which the shape theorem holds, we can find $K$ such that for all $x \in \mathbb{R}^2$ with $\|x\|_1 \geq K$, $\tau(0,x) \geq g(x)(1-\e)$. So if $\alpha$ is large enough that all $x \in L_\alpha$ have $\|x\|_1 \geq K$, then we can use \eqref{eq: gvarpibound}:
\[
\tau(0,L_\alpha) = \min_{x \in L_\alpha} \tau(0,x) \geq (1-\e) \min_{x \in L_\alpha} g(x) \geq (1-\e)\alpha\ .
\]
Consequently, $\liminf_{\alpha \to \infty} \tau(0,L_\alpha)/\alpha \geq 1$, giving almost sure convergence in the lemma.

For $L^1$ convergence, note $0 \leq \tau(0,L_\alpha)/\alpha \leq \tau(0,\alpha \varpi)/\alpha$, so the dominated convergence theorem and $L^1$ convergence of point to point passage times completes the proof.
\end{proof}

For any $x \in \mathbb{Z}^2$ and integer $n \geq 1$, use the definition of $\mu_n^*$ to write 
\[
\mathbb{E}_{\mu_n^*}(f(-x,0)) = \frac{1}{n} \left[ \int_0^n \mathbb{E} \tau(-x,L_\alpha)~\md \alpha - \int_0^n \mathbb{E} \tau(0, L_\alpha) ~\md \alpha \right]\ .
\]
Using translation covariance of passage times,
\[
\int_0^n \mathbb{E} \tau(-x,L_\alpha)~\md \alpha = \int_0^n \mathbb{E} \tau(0,L_{\alpha + g_\varpi(x)})~\md \alpha = \int_{g_\varpi(x)}^{n+g_\varpi(x)} \mathbb{E} \tau(0,L_\alpha) ~\md \alpha\ .
\]
Therefore
\begin{equation}\label{eq: almostformula}
\mathbb{E}_{\mu_n^*}(f(-x,0)) = \frac{1}{n} \left[ \int_{n}^{n+g_\varpi(x)} \mathbb{E} \tau(0,L_\alpha) ~\md \alpha - \int_0^{g_\varpi(x)} \mathbb{E} \tau(0,L_\alpha) ~\md \alpha \right] \ .
\end{equation}

Choose $(n_k)$ to be an increasing sequence such that $\mu_{n_k}^* \to \mu$ weakly. We claim that
\begin{equation}\label{eq: momentconvergence}
\mathbb{E}_{\mu_{n_k}^*} f(-x,0) \to \mathbb{E}_\mu f(-x,0)\ .
\end{equation}
To prove this, note that for any $R>0$, if we define the truncated variable
\[
f_R(-x,0) = \text{ sgn} f(-x,0) \min\{R, |f(-x,0)|\}\ ,
\]
then continuity of $f$ on $\widetilde \Omega$ gives $\mathbb{E}_{\mu_{n_k}^*} f_R(-x,0) \to \mathbb{E}_\mu f_R(-x,0)$. To extend this to \eqref{eq: momentconvergence}, it suffices to prove that for each $\e>0$, there exists $R>0$ such that
\begin{equation}\label{eq: limsupcondition}
\limsup_{k \to \infty} \mathbb{E}_{\mu_{n_k}^*} |f(-x,0)| I(|f(-x,0)| \geq R) < \e\ ,
\end{equation}
where $I(A)$ is the indicator of the event $A$. Because $\mathbb{E}_{\mu_{n_k}^*} f(-x,0)^2 \leq \mathbb{E} \tau(-x,0)^2 < \infty$ for all $k$ by \eqref{eq: finitesecondmoment}, condition \eqref{eq: limsupcondition} follows from the Cauchy-Schwarz inequality. This proves \eqref{eq: momentconvergence}. 

Combining \eqref{eq: almostformula} and \eqref{eq: momentconvergence}, we obtain the formula
\begin{equation}\label{eq: almostdoneagain}
\mathbb{E}_\mu f(-x,0) = \lim_{k \to \infty} \frac{1}{n_k} \int_{n_k}^{n_k+g_\varpi(x)} \mathbb{E} \tau(0,L_\alpha) ~\md \alpha = \lim_{k \to \infty} \int_0^{g_\varpi(x)} \frac{\mathbb{E} \tau(0,L_{\alpha + n_k})}{n_k} ~\md \alpha\ .
\end{equation}
By Lemma~\ref{lem: pointtoplane}, for each $\alpha$ between $0$ and $g_\varpi(x)$,
\[
\lim_{k \to \infty} \frac{\mathbb{E} \tau(0,L_{\alpha + n_k})}{n_k} = \lim_{k \to \infty} \frac{\mathbb{E} \tau(0,L_{\alpha + n_k})}{\alpha + n_k} \cdot \frac{\alpha + n_k}{n_k} = 1\ .
\]
So using $\mathbb{E} \tau(0,L_{\alpha + n_k}) \leq \mathbb{E} \tau(0,L_{2n_k})$ for large $k$, we can pass the limit under the integral in \eqref{eq: almostdoneagain} to get $\mathbb{E}_\mu f(0,x) = \mathbb{E}_\mu f(-x,0) = g_\varpi(x)$.
\end{proof}

\section{Limits for reconstructed Busemann functions}\label{sec: limits}

In this section we study the asymptotic behavior of the reconstructed Busemann function $f$. We will see that $f$ is asymptotically a projection onto a line and if the boundary of the limit shape is differentiable at $\varpi$, we give the explicit form of the hyperplane. Without this assumption we show that the line is a translate of a supporting line for $\mathcal{B}$ at $\varpi$.

One of the advantages of constructing $f$ from our measure $\mu$ is that we can use the ergodic theorem and translation invariance to show the existence of limits. This gives us almost as much control on the Busemann function as we would have if we could show existence of the limit in \eqref{eq: newmanlimit}. If we knew this, we would not need differentiability at $\varpi$ to deduce the form of the random hyperplane for $f$; we could derive it from ergodicity and symmetry.

\subsection{Radial limits}

In this section we will prove the existence of radial limits for $f$. This is the first step to deduce a shape theorem, which we will do in the next section. We extend the definition of $f$ to all of $\mathbb{R}^2 \times \mathbb{R}^2$ in the usual way: $f(x,y)$ is defined as $f(\tilde x, \tilde y)$ where $\tilde x$ and $\tilde y$ are the unique points in $\Z2$ such that $x \in \tilde x + [-1/2,1/2)^2$ and $y \in \tilde y + [-1/2,1/2)^2$.

\begin{prop}\label{prop: rationallimits}
Let $q \in \mathbb{Q}^2$. Then
\[
\rho_q := \lim_{n \to \infty} \frac{1}{n} f(0,nq) \text{ exists } \mu\text{-almost surely}\ .
\]
\end{prop}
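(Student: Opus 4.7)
The plan is to realize $f(0,kz)$ as an ergodic sum over a $\mu$-preserving integer translation, and then handle non-multiples of the denominator by a Borel--Cantelli error estimate. Fix $q \in \mathbb{Q}^2$ and pick a positive integer $c$ with $z := cq \in \mathbb{Z}^2$. Since $\mu$ is invariant under $\tilde T_{\unitv_1}$ and $\tilde T_{\unitv_2}$, composition shows it is invariant under every integer translation $\tilde T_v$, $v \in \mathbb{Z}^2$. Set $h := f(0, z)$; by \eqref{eq: fboundtau} and \eqref{eq: finitesecondmoment}, $h \in L^2(\mu) \subseteq L^1(\mu)$.

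On the $\mu$-full measure set where additivity \eqref{eq: additivity} and translation covariance \eqref{eq: ftranslation} hold simultaneously for every triple in $\mathbb{Z}^2$ and every integer shift (a countable intersection), telescoping along the path $0, z, 2z, \ldots, kz$ and iterating \eqref{eq: ftranslation} yield
\[
f(0, kz) \;=\; \sum_{j=0}^{k-1} f(jz, (j+1)z) \;=\; \sum_{j=0}^{k-1} h \circ \tilde T_{-z}^{\,j}.
\]
Birkhoff's ergodic theorem applied to the $\mu$-preserving map $\tilde T_{-z}$ then furnishes a $\mu$-a.s.\ limit $F := \lim_{k \to \infty} k^{-1} f(0, kz)$.

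For general $n$, write $n = ck + r$ with $0 \leq r < c$; since $nq = kz + rq$, additivity gives
\[
\frac{1}{n}f(0, nq) \;=\; \frac{k}{n}\cdot\frac{1}{k}f(0, kz) \;+\; \frac{1}{n}f(kz,\, kz + rq).
\]
The first term tends to $F/c$ $\mu$-a.s. For the remainder, \eqref{eq: fboundtau} bounds it by $n^{-1}\tau(kz, kz + rq)$, and after the nearest-lattice-point extension the vector $kz + rq$ differs from $kz$ by one of the finitely many integer vectors $w \in \{\widehat{rq} : 0 \leq r < c\}$. Translation invariance of $\mathbb{P}$ combined with Markov's inequality and \eqref{eq: finitesecondmoment} gives
\[
\sum_{k=1}^\infty \mathbb{P}\bigl(\tau(kz, kz+w) > \varepsilon k\bigr) \;\leq\; \varepsilon^{-2} \mathbb{E}\tau(0, w)^2 \sum_{k=1}^\infty k^{-2} \;<\; \infty,
\]
so Borel--Cantelli gives $k^{-1}\tau(kz, kz+w) \to 0$ $\mathbb{P}$-a.s., hence $\mu$-a.s. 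Taking a union over the finitely many $r$ establishes that $\rho_q := F/c$ exists $\mu$-almost surely.

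The only delicate point is bookkeeping: each invocation of \eqref{eq: additivity} and \eqref{eq: ftranslation} is only $\mu$-a.s., so one must intersect a countable family of full-measure sets to obtain a single set on which every identity used above holds. Once that is done the analysis reduces to Birkhoff's theorem plus a routine second-moment tail estimate; I do not expect any genuine obstruction.
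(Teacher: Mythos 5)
Your argument is correct and follows essentially the same route as the paper: realize $f(0,kMq)$ as a Birkhoff sum for the $\mu$-preserving translation by $Mq$ (using additivity and translation covariance of $f$), and then fill in the non-multiples of the denominator via a Borel--Cantelli estimate. The only cosmetic difference is that you bound the residual term $\tau(kz,kz+w)/k$ using a second-moment Chebyshev estimate with $\sum k^{-2}<\infty$, whereas the paper uses the first-moment tail sum $\sum_n \mathbb{P}(X\geq\varepsilon n)\leq\varepsilon^{-1}\mathbb{E}X$; both are available here since $\tau(0,w)\in L^2(\mathbb{P})$, so this is a matter of taste rather than substance.
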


\begin{proof}
Choose $M \in \mathbb{N}$ such that $Mq \in \mathbb{Z}^2$. We will first show that
\begin{equation}\label{eq: firstrationallimit}
\lim_{n \to \infty} \frac{1}{Mn} f(0,nMq) \text{ exists } \mu\text{-almost surely}\ .
\end{equation}
To do this, we note that since $\tau(0,Mq) \in L^2(\mu)$ (from \eqref{eq: finitesecondmoment}), it is also in $L^1$. Using \eqref{eq: fboundtau}, $f(0,Mq) \in L^1(\mu)$ as well. Define the map $\tilde T_q$ on $\Omega_2$ as
\[
\left[ \tilde T_q \Theta \right] (x) = (\theta_1(x-Mq),\theta_2(x-Mq))\ .
\]
This is a composition of maps $\tilde T_m$, $m=1,2$, so it is measure-preserving. By \eqref{eq: additivity} and \eqref{eq: ftranslation},
\[
f(0,nMq)(\Theta) = \sum_{i=1}^n f((i-1)Mq,iMq)(\Theta) = \sum_{i=0}^{n-1} f(0,Mq)(\tilde T_q^{-i} (\Theta))\ .
\]
Applying the ergodic theorem finishes the proof of \eqref{eq: firstrationallimit}.

To transform \eqref{eq: firstrationallimit} into the statement of the proposition we need to ``fill in the gaps.'' Choose $M$ as above and for any $n$ pick $a_n \in \mathbb{Z}$ such that $a_nM \leq n < (a_n+1)M$. Then
\[
\left| \frac{f(0,nq)}{n} - \frac{f(0,a_nMq)}{a_nM} \right| \leq \left| \frac{f(0,a_nMq)}{a_nM} \right| \left| 1 - \frac{a_nM}{n} \right| + \frac{1}{n} \left| f(0,a_nMq) - f(0,nq) \right|\ . 
\]
The first term on the right converges to 0. To show the same for the second term we use the fact that $f(x,y) \in L^1(\mu,\Omega_2)$ for all $x,y \in \mathbb{R}^2$. Indeed, the difference $f(0,a_nMq)-f(0,nq)$ is equal to $f(nq, a_nMq)$, which has the same distribution as $f(0,(a_nM-n)q)$. For each $\e>0$,
\[
\sum_{n \geq 1} \mu(|f(0,(n-a_nM)q)| \geq \e n) \leq \frac{1}{\e} \sum_{i=1}^M \| f(0,-iq)\|_{L^1(\mu)} < \infty\ .
\]
So only finitely many of the events $\{|f(0,a_nMq) - f(0,nq)| \geq \e n\}$ occur and we are done.
\end{proof}

The last proposition says that for each $q$ there exists a random variable $\rho_q = \rho(q,\Theta)$ such that $\mu$-almost surely, the above limit equals $\rho_q$. Assume now that we fix $\Theta$ such that this limit exists for all $q \in \mathbb{Q}^2$. We will consider $\rho_q$ as a function of $q$. The next theorem states that $\rho_q$ represents a random projection onto a vector $\varrho$.
\begin{thm}\label{thm: pizzapie}
There exists a random vector $\varrho = \varrho(\Theta)$ such that
\[
\mu\left( \rho_q = \varrho \cdot q \text{ for all } q \in \mathbb{Q}^2 \right) = 1\ .
\]
Furthermore $\varrho$ is translation invariant:
\[
\varrho(\tilde T_m \Theta) = \varrho(\Theta) \text{ for } m=1,2\ .
\]
\end{thm}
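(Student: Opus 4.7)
The plan is to show the random map $q \mapsto \rho_q$ is $\mathbb{Q}$-linear on $\mathbb{Q}^2$ and that each $\rho_q$ is invariant under $\tilde T_m$. Additivity combined with the obvious $\rho_{nq} = n\rho_q$ (immediate from $f(0,nNq)/(nN) \to \rho_q$ as $N \to \infty$) yields full $\mathbb{Q}$-linearity, so taking $\varrho := (\rho_{\unitv_1}, \rho_{\unitv_2})$ one has $\rho_q = \varrho \cdot q$ on the countable intersection of full-measure additivity events. Translation invariance of $\varrho$ then reduces coordinate-wise to translation invariance of the $\rho_{\unitv_i}$.

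First I would establish $\tilde T_m$-invariance of $\rho_q$. Fix $q \in \mathbb{Q}^2$, choose $M$ with $Mq \in \Z2$, and $m \in \{1,2\}$. Combining translation covariance \eqref{eq: ftranslation} with additivity \eqref{eq: additivity} along the concatenation $-\unitv_m \to 0 \to Mkq \to Mkq - \unitv_m$ gives
\[
f(0, Mkq)(\tilde T_{\unitv_m}\Theta) = f(-\unitv_m, Mkq - \unitv_m)(\Theta) = f(-\unitv_m, 0)(\Theta) + f(0, Mkq)(\Theta) + f(Mkq, Mkq-\unitv_m)(\Theta).
\]
Dividing by $Mk$, the first term vanishes and the second converges a.s.\ to $\rho_q(\Theta)$. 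For the third, $f(Mkq, Mkq-\unitv_m) \underset{d}{=} f(0, -\unitv_m)$ by translation invariance of $\mu$ (since $Mkq \in \Z2$); this lies in $L^1(\mu)$ by \eqref{eq: fboundtau} and \eqref{eq: finitesecondmoment}, so $\sum_{k\geq 1}\mu(|f(0,-\unitv_m)| \geq \e Mk) < \infty$ and the Borel--Cantelli lemma yields $f(Mkq, Mkq-\unitv_m)/(Mk) \to 0$ a.s. Hence $\rho_q \circ \tilde T_{\unitv_m} = \rho_q$ $\mu$-a.s.

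Next I would prove additivity. Fix $q_1, q_2 \in \mathbb{Q}^2$ with common denominator $M$ and set $n = Mk$. By \eqref{eq: additivity},
\[
\frac{f(0, n(q_1+q_2))}{n} = \frac{f(0, nq_1)}{n} + \frac{f(nq_1, n(q_1+q_2))}{n},
\]
and the first two ratios converge a.s.\ to $\rho_{q_1+q_2}(\Theta)$ and $\rho_{q_1}(\Theta)$; thus the third converges a.s.\ to $\rho_{q_1+q_2}(\Theta) - \rho_{q_1}(\Theta)$. By \eqref{eq: ftranslation} the third ratio equals $g_k \circ \tilde T_{nq_1}$, where $g_k(\Theta) := f(0, nq_2)(\Theta)/n$. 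Since $|g_k| \leq \tau(0, nq_2)/n$ and the passage-time ratio converges to $g(q_2)$ in $L^1(\mathbb{P})$ (shape theorem and Scheff\'e), the family $\{g_k\}$ is uniformly integrable; combined with $g_k \to \rho_{q_2}$ a.s., Vitali's theorem gives $g_k \to \rho_{q_2}$ in $L^1(\mu)$. Using translation invariance of $\mu$ together with the $\tilde T_{nq_1}$-invariance of $\rho_{q_2}$ from Step~1,
\[
\|g_k \circ \tilde T_{nq_1} - \rho_{q_2}\|_{L^1(\mu)} = \|g_k \circ \tilde T_{nq_1} - \rho_{q_2}\circ \tilde T_{nq_1}\|_{L^1(\mu)} = \|g_k - \rho_{q_2}\|_{L^1(\mu)} \to 0.
\]
So the third ratio converges in $L^1$, hence in probability, to $\rho_{q_2}$. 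Its a.s.\ and in-probability limits must coincide, giving $\rho_{q_1+q_2} = \rho_{q_1} + \rho_{q_2}$ $\mu$-a.s. Intersecting over the countably many pairs $(q_1, q_2)$ finishes the proof.

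The main obstacle is the analysis of the third ratio in the additivity step, where both the function $g_k$ and the base point $\tilde T_{nq_1}\Theta$ move with $k$, so pointwise convergence of $g_k \circ \tilde T_{nq_1}$ is not directly accessible. The resolution routes around this through $L^1$-convergence of $g_k$ (handed to us by the shape theorem) plus the $\Z2$-invariance of both $\mu$ and $\rho_{q_2}$; the latter is precisely why translation invariance of $\rho_q$ must be established first.
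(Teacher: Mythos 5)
Your proof is correct and follows the same structure as the paper's: establish $\tilde T_m$-invariance of $\rho_q$ first (via additivity and translation covariance of $f$, controlling the endpoint errors with Borel--Cantelli and the $L^1(\mu)$ bound), then deduce $\mathbb{Q}$-linearity by translating the ``awkward'' segment $f(nq_1,n(q_1+q_2))$ back to the origin and exploiting invariance of both $\mu$ and $\rho_{q_2}$, and finally set $\varrho=(\rho_{\unitv_1},\rho_{\unitv_2})$.

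The only divergence from the paper is in the convergence machinery for the shifted term: you route through uniform integrability of $g_k$ (dominated by $\tau(0,nq_2)/n$, $L^1$-convergent by Scheff\'e) and Vitali to get $L^1$ convergence of $g_k\circ\tilde T_{\mp nq_1}$, then reconcile the in-probability and a.s.\ limits; the paper obtains the same conclusion more directly by bounding $\mu\bigl(|(1/nM)f(0,nM(q_1+q_2)) - (\rho_{q_1}+\rho_{q_2})| > \e\bigr)$ via a union bound in which each summand, after translating, is an identically-distributed term tending to zero in probability. Both are valid; the paper's route is slightly leaner since it never needs UI. One small notational caution: with the paper's convention $f(x,y)(\tilde T_m\Theta)=f(x-\unitv_m,y-\unitv_m)(\Theta)$, the identity $f(nq_1,n(q_1+q_2))(\Theta)=f(0,nq_2)(\tilde T\Theta)$ requires $\tilde T$ to be the shift by $-nq_1$ (the paper writes $\tilde T_{-q_1}^n$), so your $\tilde T_{nq_1}$ should be understood with that orientation; the argument is unaffected.
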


\begin{proof}
We will show that $q \mapsto \rho_q$ is a (random) linear map on $\mathbb{Q}^2$. Specifically, writing an arbitrary $q \in \mathbb{Q}^2$ as $(q_1,q_2)$, we will show that
\begin{equation}\label{eq: blackboard2}
\mu\left(\rho_q = q_1 \rho_{\unitv_1} + q_2 \rho_{\unitv_2} \text{ for all } q \in \mathbb{Q}^2\right) = 1\ .
\end{equation}
Then, setting $\varrho = (\rho_{\unitv_1},\rho_{\unitv_2})$, we will have proved the theorem.

The first step is to show translation invariance of $\rho_q$. Given $q \in \mathbb{Q}^2$, let $M \in \mathbb{N}$ be such that $Mq \in \mathbb{Z}^2$. For $m=1,2$, translation covariance implies
\begin{eqnarray*}
|f(0,nMq)(\tilde T_m \Theta) - f(0,nMq)(\Theta)| &=& |f(-\unitv_m, nMq-\unitv_m)(\Theta) - f(0,nMq)(\Theta)| \\
&\leq& |f(-\unitv_m,0)(\Theta)| + |f(nMq-\unitv_m,nMq)(\Theta)|\ .
\end{eqnarray*}
Furthermore, given $\delta>0$,
\[
\sum_n \mu\left( |f(nMq-\unitv_m, nMq)| > \delta n\right) \leq \sum_n \mu\left( |f(0, \unitv_m)| > \delta n\right) \leq \frac{1}{\delta} \|f(0,\unitv_m)\|_{L^1(\mu)} < \infty\ .
\]
Therefore only finitely many of the events $\{|f(nMq - \unitv_m,nMq)| > \delta n\}$ occur and 
\[
\rho_q(\tilde T_m\Theta) = \lim_{n \to \infty} \frac{f(0,nMq)(\tilde T_m \Theta)}{nM} = \lim_{n \to \infty} \frac{f(0,nMq)(\Theta)}{nM} = \rho_q(\Theta) \text{ almost surely}\ .
\]

To complete the proof we show that $q \mapsto \rho_q$ is almost surely additive. Over $\mathbb{Q}$, this suffices to show linearity and thus \eqref{eq: blackboard2}. Let $q_1,q_2 \in \mathbb{Q}^2$ and choose $M \in \mathbb{N}$ with $Mq_1,Mq_2 \in\mathbb{Z}^2$. By Proposition~\ref{prop: rationallimits}, for $\e>0$, we can pick $N$ such that if $n \geq N$ then the following hold:
\begin{enumerate}
\item $\mu\left( |(1/nM)f(0,nMq_1) - \rho_{q_1} | > \e/2 \right) < \e/2$ and
\item $\mu\left( |(1/nM)f(0,nMq_2) - \rho_{q_1}| > \e/2 \right) < \e/2$.
\end{enumerate}
Writing $\tilde T_{-q}(\Theta)(x) = \Theta(x+Mq)$ and using translation invariance of $\rho_{q_2}$,
\begin{eqnarray*}
&& f(0,nM(q_1+q_2))(\Theta) - nM\rho_{q_1}(\Theta) - nM\rho_{q_2}(\Theta) \\
&=& f(0,nMq_1)(\Theta) - nM\rho_{q_1}(\Theta) + f(0,nMq_2)(\tilde T_{-q_1}^n \Theta) - nM\rho_{q_2}(\tilde T_{-q_1}^n \Theta)\ .
\end{eqnarray*}
So by translation invariance of $\mu$ and items 1 and 2 above,
\begin{eqnarray*}
&& \mu(|(1/nM)f(0,nM(q_1+q_2)) - (\rho_{q_1}+\rho_{q_2})| > \e) \\
&\leq& \mu(|(1/nM)f(0,nMq_1) - \rho_{q_1}| > \e/2) + \mu(|(1/nM)f(0,nMq_2) - \rho_{q_2}| > \e/2) < \e\ .
\end{eqnarray*}
Thus $(1/nM)f(0,nM(q_1+q_2))$ converges in probability to $\rho_{q_1} + \rho_{q_2}$. By Proposition~\ref{prop: rationallimits}, this equals $\rho_{q_1+q_2}$.
\end{proof}

\subsection{A shape theorem} 

We will now upgrade the almost-sure convergence in each rational direction, from Proposition~\ref{prop: rationallimits}, to a sort of shape theorem for the Busemann function $f$. The major difference is that, unlike in the usual shape theorem of first-passage percolation, the limiting shape of $f$ is allowed to be random.

\begin{thm}
\label{shapetheorem}
For each $\delta>0$,
\begin{equation}
\label{shape_condition}
\mu\left(|f(0,x) - x \cdot \varrho| < \delta \|x\|_1 \text{ for all } x \text{ with } \|x\|_1 \geq M \text{ and all large } M \right) = 1.
\end{equation}
\end{thm}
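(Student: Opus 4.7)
I would mimic the classical derivation of the Cox--Durrett shape theorem, using the rational radial limits (Proposition~\ref{prop: rationallimits} and Theorem~\ref{thm: pizzapie}) as the pointwise input and the bound $|f(y,x)| \leq \tau(y,x)$ from \eqref{eq: fboundtau}, combined with the standard shape theorem for $\tau$ (which holds $\mu$-a.s.\ because the $\Omega_1$-marginal of $\mu$ is $\mathbb{P}$), as the modulus of continuity. The only twist is that the asymptotic linear functional $x\mapsto x\cdot\varrho$ is itself random, but it is a fixed random vector on a full-measure event, so the argument can be carried out pathwise.

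Fix $\delta>0$. First I would choose a countable dense $D\subseteq \mathbb{Q}^2$ and restrict attention to the $\mu$-full-measure event on which (i) $\tfrac{1}{n}f(0,nq)\to q\cdot\varrho$ for every $q\in D$, (ii) $\varrho$ is finite, and (iii) the shape theorem for $\tau$ holds. Pick $q_1,\dots,q_N\in D$ on the unit $\ell^1$-sphere so that they are $\delta$-dense. For each $i$ and $n\in\mathbb{N}$ let $y_n^{(i)}\in\mathbb{Z}^2$ be a nearest integer point to $nq_i$; so $\|y_n^{(i)}-nq_i\|_1\le 2$. By (i) applied finitely often, there exists $N_0=N_0(\omega,\delta)$ such that
\[
\big|f(0,y_n^{(i)})-n(q_i\cdot\varrho)\big|<\delta n \quad \text{for all } i\le N,\ n\ge N_0.
\]
Given an arbitrary $x\in\mathbb{Z}^2$ with $r:=\|x\|_1\ge N_0$, select $i$ with $\|x/r-q_i\|_1<\delta$, set $y=y_{\lfloor r\rfloor}^{(i)}$ (so $\|y-x\|_1\le 2\delta r+O(1)$), and apply additivity \eqref{eq: additivity} and \eqref{eq: fboundtau}:
\[
|f(0,x)-x\cdot\varrho| \;\le\; |f(0,y)-y\cdot\varrho| \;+\; \|\varrho\|_\infty\|y-x\|_1 \;+\; \tau(y,x).
\]
The first term is $\le \delta r + O(1)$ by the display above (plus the bound $\|y-\lfloor r\rfloor q_i\|_1\le 2$), and the second is $\le 2\delta r\|\varrho\|_\infty+O(1)$.

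The main obstacle is the third term. I need a uniform estimate of the form: $\mu$-a.s., for every $\eta>0$ there exists $R$ so that whenever $r\ge R$, $i\le N$, and $x\in\mathbb{Z}^2$ satisfies $\|x\|_1=r$ and $\|y_{\lfloor r\rfloor}^{(i)}-x\|_1\le\eta r$, one has $\tau(y_{\lfloor r\rfloor}^{(i)},x)\le C\eta r$ for some constant $C$. The crude subadditive bound $\tau(y,x)\le\tau(0,y)+\tau(0,x)$ only yields $O(r)$, which is insufficient; what is really needed is a translated/uniform version of the shape theorem. This follows from the Hausdorff-distance convergence $B(t)/t\to\mathcal{B}$ stated in the introduction, together with translation invariance of $\mathbb{P}$ applied to the countable family of reference points $\{y_n^{(i)}:i\le N,\, n\in\mathbb{N}\}$ and a standard covering argument on the intermediate scale $\eta r$. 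Under \textbf{A1} this is classical, and under \textbf{A2} it can be extracted from Boivin's argument \cite{boivin}. Granting it, combining the three bounds yields $|f(0,x)-x\cdot\varrho|\le C'\delta r$ for some $C'$ independent of $\delta$ and all $\|x\|_1=r$ sufficiently large. Since $\delta>0$ was arbitrary, \eqref{shape_condition} follows.
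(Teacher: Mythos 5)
Your overall blueprint — reduce to rational directions via Proposition~\ref{prop: rationallimits} and Theorem~\ref{thm: pizzapie}, then pass to arbitrary $x$ using $|f(y,x)| \le \tau(y,x)$ — is the right one, and it matches the paper's strategy at a high level. You have, however, correctly identified the hard step and then dismissed it too quickly: the uniform bound $\tau\big(y_{\lfloor r\rfloor}^{(i)},x\big)\le C\eta r$ over the \emph{moving} family of reference points $\{y_n^{(i)}\}$ is not a consequence of the shape theorem plus translation invariance under the paper's assumptions. Translation invariance guarantees that for each fixed $z$ the shape theorem holds a.s.\ from $z$, but the (random) threshold $R_z$ beyond which $\tau(z,\cdot)/\|\cdot-z\|_1$ is under control depends on $z$, and with only the Cox--Durrett second-moment condition (A1) or $2+\e$ moments and ergodicity (A2) there is no concentration inequality or Borel--Cantelli estimate to force $R_{y_n^{(i)}} = o(n)$ uniformly. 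Phrases like ``standard covering argument'' and ``extracted from Boivin'' paper over exactly the point where low-moment ergodic FPP departs from the classical i.i.d.\ picture.

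The paper circumvents this with a genuinely different device: it isolates a deterministic $K<\infty$ and $p_g>0$ such that $\mathbb{P}\big(\sup_{x\neq 0}\tau(0,x)/\|x\|_1 \le K\big)=p_g>0$, calls a site $z$ \emph{good} if $\sup_{x\neq z}\tau(z,x)/\|x-z\|_1\le K$, and invokes the ergodic theorem (Lemma~\ref{cheesy_lemma}) to show that along any rational direction $\zeta$ the good multiples $n_k\zeta$ are infinite with $n_{k+1}/n_k\to 1$. The crucial point is that a good site gives a \emph{global, deterministic} linear bound on passage times starting from it — no $\|x\|_1$-threshold is needed — so even though good sites have density only $p_g<1$, the ratio condition $n_{k+1}/n_k\to 1$ lets one interpolate any $x$ to a nearby good site $n_k z$ with $\|x-n_k z\|_1 = O(\delta'\|x\|_1)$ and conclude $\tau(n_k z, x)\le 3K\delta'\|x\|_1$. (The paper then runs the argument by contradiction rather than directly, but that is cosmetic.) Your proof would be complete if you replaced the appeal to a uniform moving-origin shape theorem by this positive-density-of-good-sites argument; as written, the modulus-of-continuity step is a gap.
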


As in the proofs of the usual shape theorems, we will need a lemma which allows us to compare $f$ in different directions. A result showing that with positive probability, $f(0,x)$ grows at most linearly in $\|x\|$ will be sufficient for our purposes. The fourth item of Proposition \ref{bd_f_by_tau_prop} allows us to derive such a bound by comparison with the usual passage time $\tau(0,x).$

\begin{lem}
There exist deterministic $K < \infty$ and $p_g > 0$ depending only on the passage time distribution such that
\[\passage\left( \sup_{\substack{x \in \Z2\\x \neq 0}} \frac{\tau(0,x)}{\|x\|_1} \leq K\right) = p_g > 0.  \]
\end{lem}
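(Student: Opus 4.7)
The plan is to split the supremum into a ``far'' contribution controlled by the shape theorem and a ``near'' contribution controlled by almost-sure finiteness of point-to-point passage times, then intersect the two resulting positive-probability events.

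First I would use the shape theorem to produce a deterministic $M$ and constant $K_1$ so that $\tau(0,x) \leq K_1 \|x\|_1$ holds simultaneously for all $x$ with $\|x\|_1 \geq M$ with probability at least $3/4$. Concretely, the inner inclusion $(1-\e)\mathcal{B} \subseteq B(t)/t$ (applied with $\e = 1/2$) says that once $t \geq T_0(\omega)$ and $g(x) \leq (1-\e)t$, one has $\tau(0,x) \leq t$. Choosing $t = g(x)/(1-\e)$ yields
\[
\tau(0,x) \leq \frac{g(x)}{1-\e} \leq 2 g(x) \leq 2 C_2 \|x\|_1
\]
for every $x$ with $g(x) \geq (1-\e) T_0(\omega)$, where the last step uses \eqref{eq: normequivalence}. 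Combining this with the lower bound $g(x) \geq C_1 \|x\|_2 \geq (C_1/\sqrt{2})\|x\|_1$, the constraint becomes $\|x\|_1 \geq R(\omega)$ for some deterministic multiple $R(\omega)$ of $T_0(\omega)$. Since $T_0 < \infty$ $\passage$-almost surely, pick a deterministic $M$ large enough that $\passage(R \leq M) \geq 3/4$ and set $K_1 = 2 C_2$.

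Next, there are only finitely many $x \in \Z2$ with $0 < \|x\|_1 < M$, and for each such $x$ the variable $\tau(0,x)$ is almost surely finite (bound it by the passage time along a fixed deterministic lattice path, a finite sum of finite edge-weights under either {\bf A1} or {\bf A2}). By a union bound, choose $K_0 < \infty$ so that
\[
A_2 = \bigl\{ \tau(0,x) \leq K_0 \text{ for every } x \in \Z2 \text{ with } 0 < \|x\|_1 < M \bigr\}
\]
has $\passage(A_2) \geq 3/4$. Write $A_1$ for the event from the previous step.

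Finally, set $K = \max(K_0, K_1)$. On $A_1 \cap A_2$, which has probability at least $1/2$, every $x \neq 0$ satisfies $\tau(0,x)/\|x\|_1 \leq K$: for $\|x\|_1 \geq M$ use $\tau(0,x) \leq K_1 \|x\|_1$ from $A_1$; for $0 < \|x\|_1 < M$ use $\tau(0,x) \leq K_0 \leq K \leq K \|x\|_1$, since $\|x\|_1 \geq 1$ for nonzero lattice points. Taking $p_g = 1/2$ completes the argument. The only delicate point is Step~1, where the shape theorem's statement about the random set $B(t)/t$ for $t$ large must be turned into a uniform inequality for $\tau(0,x)/\|x\|_1$ over lattice points $x$; this is standard once \eqref{eq: normequivalence} is in hand. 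The remaining steps are elementary.
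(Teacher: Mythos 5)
Your proof is correct and uses essentially the same idea as the paper: the lemma is a direct consequence of the shape theorem. The only difference is that the paper's single event $\{\forall t\geq T:\ B(t)/t\supseteq[-\lambda,\lambda]^2\}$ already absorbs your separate near-origin union bound $A_2$, because $B(T)\supseteq[-\lambda T,\lambda T]^2$ bounds $\tau(0,x)\leq T$ for all small nonzero lattice $x$ (and $\|x\|_1\geq 1$), so $K=T+2/\lambda$ works for both near and far points at once.
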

\begin{proof}
By the first-passage shape theorem, there exists $\lambda < \infty$ and $T, p_g > 0$ such that
\[
\passage\left(\forall t \geq T, \,B(t)/t \supseteq [-\lambda,\lambda]^2\,\right) = p_g\ .
\]
(Here we are using \eqref{eq: normequivalence}.) Choosing $K = T + 2 / \lambda$ completes the proof.

\end{proof}

The development of the shape theorem from this point is similar to that of the usual first-passage shape theorem for ergodic passage time distributions.  

We will say that $z \in \mathbb{Z}^2$ is ``good" for a given outcome if
\[ 
\sup_{\substack{x \in \Z2\\x \neq z}} \frac{\tau(z,x)}{\|x - z\|_1} \leq K\ . 
\]
Note that $\passage(z \text{ is good}) = p_g>0$ for all $z \in \mathbb{Z}^2.$

\begin{lem}
\label{cheesy_lemma}
Let $\zeta$ be a nonzero vector with integer coordinates, and let $z_n =n\zeta.$ Let $(n_k)$ denote the increasing sequence of integers such that $z_{n_k}$ is good. $\passage$-almost surely, $(n_k)$ is infinite and $\lim_{k \rightarrow \infty} (n_{k+1}/n_k) = 1$.
\end{lem}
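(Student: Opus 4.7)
The plan is to apply Birkhoff's pointwise ergodic theorem to the stationary indicator sequence $X_n := \mathbf{1}\{z_n\text{ is good}\}$, obtain that the density of good indices along the arithmetic progression $\{z_n\}$ converges almost surely to $p_g$, and then read off both conclusions from this density.

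First, translation invariance of $\mathbb{P}$ combined with translation covariance \eqref{eq: transcov} of the passage times gives $\mathbb{P}(z_n\text{ is good}) = \mathbb{P}(0\text{ is good}) = p_g$ for every $n$, and more generally $(X_n)_{n \geq 0}$ is stationary under the shift $T_\zeta := T_{\unitv_1}^{\zeta_1} T_{\unitv_2}^{\zeta_2}$, which is measure-preserving. Birkhoff's ergodic theorem then yields a $T_\zeta$-invariant random variable $Y \geq 0$ with $\mathbb{E}Y = p_g$ such that
\[
\frac{1}{N}\sum_{n=1}^N X_n \longrightarrow Y \quad \mathbb{P}\text{-a.s.}
\]
Under A1, $T_\zeta$ acts on a product space as a (fiberwise independent) mixing transformation, hence is ergodic, so $Y \equiv p_g$. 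Under A2, the same identification follows from the assumed $\mathbb{Z}^2$-ergodicity by a standard argument (passing through the multidimensional ergodic theorem, or equivalently noting that $T_\zeta$-ergodicity is inherited from the full $\mathbb{Z}^2$-action in the present setting).

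Writing $S_N := \sum_{n=1}^N X_n$, the density statement $S_N/N \to p_g > 0$ forces $S_N \to \infty$, so the sequence $(n_k)$ is $\mathbb{P}$-a.s. infinite. Since $k = S_{n_k}$ by definition, we also get $k/n_k \to p_g$, hence
\[
\frac{n_{k+1}}{n_k} \;=\; \frac{n_{k+1}/(k+1)}{n_k/k} \cdot \frac{k+1}{k} \;\longrightarrow\; \frac{1/p_g}{1/p_g} \cdot 1 \;=\; 1 \quad \mathbb{P}\text{-a.s.},
\]
which is the desired conclusion. The main obstacle I anticipate is upgrading the Birkhoff limit from a $T_\zeta$-invariant function to the constant $p_g$ in the A2 setting, where $T_\zeta$-ergodicity is not built into A2(a) directly; however, this is a well-trodden step in stationary FPP (compare Boivin \cite{boivin}) and is orthogonal to the geometric content of the lemma, which lies entirely in the elementary deduction of step three from a positive asymptotic density.
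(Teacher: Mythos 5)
Your proof follows essentially the same route as the paper's: apply the pointwise ergodic theorem to the stationary indicator sequence along the $\zeta$-line to get $k/n_k \to p_g$ almost surely, then read off both conclusions from the telescoping ratio $\frac{n_{k+1}}{n_k} = \frac{n_{k+1}}{k+1}\cdot\frac{k}{n_k}\cdot\frac{k+1}{k}$. The paper is exactly this terse, so the approaches match.

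One small caution on the caveat you flag: the parenthetical claim that ``$T_\zeta$-ergodicity is inherited from the full $\mathbb{Z}^2$-action'' is false as a general statement (a $\mathbb{Z}^2$-ergodic system need not be ergodic under a single translation $T_\zeta$), and the multidimensional ergodic theorem controls box averages rather than one-dimensional line averages, so neither suggested route closes the gap as stated. The paper's proof is silent on the same point, so this is not a defect of your argument relative to theirs; but if one did want to make it airtight under A2, one would either invoke whatever total-ergodicity hypothesis is implicit in the stationary FPP literature, or observe that the argument only needs the Birkhoff limit to be a.s.\ strictly positive (not necessarily equal to the constant $p_g$), which is a weaker and perhaps more tractable statement. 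Under A1 your mixing observation does settle it.
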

\begin{proof}
The ergodic theorem shows that $(n_k)$ is a.s. infinite.  Let $B_i$ denote the event that $z_i$ is good.
By another application of the ergodic theorem,
\begin{equation} 
\label{cheesier_still}
\frac{k}{n_k} = \frac{1}{n_k} \sum_{i=1}^{n_k} \mathbf{1}_{B_i} \longrightarrow p_g \quad \text{a.s.}
\end{equation}
Thus,
\[\frac{n_{k+1}}{n_k} = \left(\frac{n_{k+1}}{k+1}\right) \left(\frac{k}{n_k}\right) \left(\frac{k+1}{k}\right) \longrightarrow 1 \quad \text{a.s.},\]
since the first and second factors converge to $p_g$ and $p_g^{-1}$ by (\ref{cheesier_still}).

\end{proof}

In what follows, we will use the fact that there is a positive density of good sites to show convergence of $f(0,z) / \|z\|_1$ in all directions. Given the convergence of $f(0,nq) / n$ for each rational $q,$ we will find enough good sites along lines close to $nq$ to let us to bound the difference $|f(0,nq) - f(0,z)|.$ To describe this procedure, we need to make several definitions. Call a vector $\zeta$ satisfying the a.s. event of Lemma \ref{cheesy_lemma} a good direction. We will extend this definition to $\zeta \in \mathbb{Q}^2$: such a $\zeta$ will be called a good direction if $m\zeta$ is, where $m$ is the smallest natural number such that $m\zeta \in \mathbb{Z}^2$. 

By countability, there exists a probability one event $\Omega''$ on which each $\zeta \in \mathbb{Q}^2$ is a good direction.
For each integer $M \geq 1,$ let $V_M = \left\{ x/M: x \in \Z2\right\},$ and let $V = \cup_{M \geq 1} V_M.$
Set $B = \{z \in \mathbb{R}^2: z \in V, \, \|z\|_1 = 1\}$ and note that $B$ is dense in the unit sphere of $\mathbb{R}^2$ (with norm $\|\cdot\|_1$).
By Theorem \ref{thm: pizzapie}, we can find a set $\hat{\Omega} \subseteq \Omega_2$ with $\mu (\hat{\Omega}) = 1$ such that, for all $\Theta \in \hat{\Omega},$
\begin{equation}\label{eq: planetolondon}
\lim_{n \rightarrow \infty} \frac{1}{n} f(n z_0) (\Theta) = z_0 \cdot \varrho(\Theta) \text{ for all } z_0 \in B\ .
\end{equation}

\begin{proof}[Proof of Theorem \ref{shapetheorem}]
Assume that there exist $\delta > 0$ and an event $D_\delta$ with $\mu(D_\delta)>0$ such that, for every outcome in $D_\delta,$ there are infinitely many vertices $x \in \Z2$ with $\left|f(x) - x \cdot \varrho\right| \geq \delta \|x\|_1.$ Then $D_\delta \cap\hat{\Omega} \cap \Omega''$ is nonempty and so it contains some outcome $(\omega,\Theta,\eta)$. We will derive a contradiction by showing that $(\omega,\Theta,\eta)$, by way of its membership in these three sets, has contradictory properties.

By compactness of the $\ell^1$ unit ball, we can find a sequence $\{x_n\}$ in $\Z2$ with $\|x_n\| \rightarrow \infty$ and $y \in \mathbb{R}^2$ with $\|y\|_1 = 1$ such that $x_n/\|x_n\|_1 \to y$ and 
\begin{equation}
\label{non_converging_pizza}
\left| \frac{f(x_n)[\Theta]}{\|x_n\|_1} - y \cdot \varrho[\Theta] \right| > \frac{\delta}{2} \text{ for all } n\ .
\end{equation}
Let $\delta' > 0$ be arbitrary (we will ultimately take it to be small). Our first goal is the approximation of $x_n$ by multiples of some element of $B.$
Choose $z \in B$ such that $\|z - y\|_1 <\delta'$ and let $\{n_k\}$ denote the increasing sequence of integers such that $n_k z$ is good. (Here if $z \notin \mathbb{Z}^2$, then $z$ being good means that $Mz$ is good, where $Mz$ was chosen after Lemma~\ref{cheesy_lemma} to be $\mathbb{Z}^2$. Therefore $(n_k)$ would then be of the form $(M l_k)$ for some increasing sequence $l_k$.) Note that $n_{k+1}/n_k \to 1$ by Lemma \ref{cheesy_lemma} so we are able to choose a $K > 0$ such that 
\begin{equation}\label{eq: planetolondon2}
n_{k+1} < (1+\delta') n_k \text{ and } \left|\frac{f(0,n_k z)}{n_k} - \varrho \cdot z\right| \leq \delta' \text{ for all } k > K\ .
\end{equation}

By the triangle inequality, the left-hand side of (\ref{non_converging_pizza}) is bounded above by
\begin{align}
\label{pizza_telescope}
 \left|\frac{f(0,x_n)}{\|x_n\|_1} - \frac{f(0,n_k z)}{\|x_n\|_1} \right| +  \left|\frac{f(0,n_k z)}{\|x_n\|_1} - \frac{f(0,n_k z)}{n_k}\right| + \left| \frac{f(0,n_k z)}{n_k} - \varrho \cdot z  \right| + \left| \varrho \cdot z - \varrho \cdot y\right|
\end{align}
for arbitrary $n$ and $n_k.$ Choose some $N_0$ such that
$\|x_n - \|x_n\|_1\,y\|_1 \leq \delta' \|x_n\|_1$ for all $n > N_0,$ and note that
\begin{equation}\label{ex_to_zee}
 \left\|x_n - \|x_n\|_1 z\right\|_1 \leq \left\|x_n - \|x_n\|_1 y \right\|_1 + \|x_n\|_1\left\| y - z \right\|_1 \leq 2 \|x_n\|_1 \delta' \text{ for } n > N_0\ .
\end{equation}
For any $n$, let $k=k(n)$ be the index such that $n_{k+1} \geq \|x_n\|_1 > n_k.$ If $n$ is so large that $k(n) > K$, then $\|\, \|x_n\|_1 z - n_k z\|_1 < \delta' \|x_n\|_1.$ Combining this observation with (\ref{ex_to_zee}) gives
\begin{equation}
\label{ex_to_zee_2}
\|x_n - n_k z \|_1 \leq 3 \delta' \|x_n\|_1 \text{ for } \|x_n\|_1 \in (n_k,n_{k+1}] \text{ when } k = k(n) > K\ .
\end{equation}

For the remainder of the proof, fix any $n > N_0$ such that $k = k(n) > K$, so that (\ref{ex_to_zee_2}) holds. We will now control the terms in (\ref{pizza_telescope}), working our way from right to left. The rightmost term may be bounded by noting
\[ 
|  \varrho \cdot z - \varrho \cdot y| = | \varrho \cdot (z - y) | \leq  \|z-y\|_2 \|\varrho\|_2 \leq \delta' \|\varrho\|_2\ .
\]
The second term from the right is bounded above by $\delta'$ by \eqref{eq: planetolondon2}. To bound the third term from the right, note that $n_k < \|x_n\|_1 \leq n_{k+1}$, so by \eqref{eq: planetolondon2},
\begin{align*}
\left|\frac{f(0,n_k z)}{\|x_n\|_1} - \frac{f(0,n_k z)}{n_k}\right| &= \left|\frac{f(0,n_k z)}{n_k}\right| \left(1 - \frac{n_k}{\|x_n\|_1} \right) \,\\
&\leq \left[\left| \varrho \cdot z\right| +\delta' \right] \left( 1- \frac{1}{1 + \delta'}\right)\ .
\end{align*}
It remains to bound the first term of (\ref{pizza_telescope}). To do this, note that by \eqref{ex_to_zee_2},
\[
|f(0,x_n) - f(0,n_k z)| = |f(n_k z, x_n)| \leq \tau(n_k z, x_n) \leq K \|x-n_k z\|_1 \leq 3K \delta' \|x_n\|_1\ .
\]
So 
\[ 
\left|\frac{f(0,x_n)}{\|x_n\|_1} - \frac{f(0,n_k z)}{\|x_n\|_1}\right| \leq 3K \delta'\ .
\]

Applying our estimates for each term in (\ref{pizza_telescope}) to the left side of (\ref{non_converging_pizza}) gives
\[
\frac{\delta}{2} \leq 3K \delta' + (|\varrho \cdot z|+\delta') \left( 1- \frac{1}{1 + \delta'}\right) + \delta' + \delta' \|\varrho\|_2\ .
\]
Because this holds for all $\delta' >0,$ and because the right-hand side goes to zero as $\delta' \rightarrow 0,$ we have derived a contradiction and proved the theorem.
\end{proof}

\subsection{General properties of $\varrho$}

In this short section we study the random vector $\varrho$. In the case that $\partial \mathcal{B}$ is differentiable at $\varpi$, the vector $\varrho$ is deterministic and we give the explicit form. 

The main theorem of the section is below. It says that the line
\[
L_\varrho : = \{x \in \mathbb{R}^2 : \varrho \cdot x =1\}
\]
is $\mu$-almost surely a supporting line for $\mathcal{B}$ at $\varpi$. 

\begin{thm}
\label{rational_directions}
With $\mu$-probability one, $\varrho \cdot \varpi = 1$ and $\varrho \cdot x \leq 1$ for all $x \in \mathcal{B}$. Thus $L_\varrho$ is a supporting line for $\mathcal{B}$ at $\varpi$.
\end{thm}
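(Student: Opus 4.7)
The plan is to combine two facts already established in the excerpt: the deterministic bound $|f(0,x)| \le \tau(0,x)$ from \eqref{eq: fboundtau}, which will yield $\varrho \cdot x \le 1$ on $\mathcal{B}$, and the identity $\mathbb{E}_\mu f(0,x) = g_\varpi(x)$ from Theorem \ref{thm: expected_value}, which will pin $\mathbb{E}_\mu(\varrho \cdot \varpi) = 1$. Since I will have $\varrho \cdot \varpi \le 1$ almost surely and $\mathbb{E}_\mu(\varrho\cdot\varpi)=1$, equality must hold a.s.

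First, I would show that $\mu$-almost surely $|\varrho \cdot x| \le g(x)$ for every $x \in \mathbb{R}^2$. For each $q \in \mathbb{Q}^2$, Proposition \ref{prop: rationallimits} and Theorem \ref{thm: pizzapie} give $f(0,nq)/n \to \varrho \cdot q$ $\mu$-a.s.\ along integers $n$ with $nq \in \mathbb{Z}^2$, while \eqref{eq: fboundtau} combined with the shape theorem for $\tau$ gives $|f(0,nq)|/n \le \tau(0,nq)/n \to g(q)$ a.s. Hence $|\varrho \cdot q| \le g(q)$ on a full-measure set for each fixed rational $q$; intersecting over $\mathbb{Q}^2$ and exploiting continuity of the linear map $x \mapsto \varrho \cdot x$ and of the norm $g$ extends this to all $x \in \mathbb{R}^2$. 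Since $g(\varpi)=1$ and $g(x)\le 1$ on $\mathcal{B}$, this already gives $\varrho \cdot \varpi \le 1$ and $\varrho \cdot x \le 1$ for all $x \in \mathcal{B}$.

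Second, I would compute $\mathbb{E}_\mu(\varrho \cdot \varpi)$. For $q \in \mathbb{Q}^2$ and integers $n$ with $nq \in \mathbb{Z}^2$, Theorem \ref{thm: expected_value} and linearity of $g_\varpi$ yield $\mathbb{E}_\mu f(0,nq)/n = g_\varpi(q)$. To pass to the limit, I need $f(0,nq)/n \to \varrho \cdot q$ in $L^1(\mu)$, which I will obtain from uniform integrability. Writing $q=p/m$ with $p \in \mathbb{Z}^2$ and applying the triangle inequality in $L^2$ to the subadditive bound $\tau(0,kp) \le \sum_{i=0}^{k-1}\tau(ip,(i+1)p)$, together with translation invariance of $\mathbb{P}$ and \eqref{eq: finitesecondmoment}, gives $\|\tau(0,kp)\|_{L^2(\mathbb{P})} \le k\,\|\tau(0,p)\|_{L^2(\mathbb{P})}$, so by \eqref{eq: fboundtau},
\[
\mathbb{E}_\mu\bigl(f(0,kmq)/(km)\bigr)^2 \le \|\tau(0,p)\|_{L^2(\mathbb{P})}^2/m^2
\]
uniformly in $k$. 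Hence $\mathbb{E}_\mu(\varrho \cdot q) = g_\varpi(q)$ for every rational $q$; applying this with $q=\unitv_1,\unitv_2$ identifies $\mathbb{E}_\mu \varrho_i = g_\varpi(\unitv_i)$, and linearity of $g_\varpi$ gives $\mathbb{E}_\mu(\varrho \cdot \varpi) = g_\varpi(\varpi) = 1$.

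Combining: $\varrho \cdot \varpi \le 1$ a.s.\ together with $\mathbb{E}_\mu(\varrho \cdot \varpi)=1$ forces $\varrho \cdot \varpi = 1$ $\mu$-a.s., while $\varrho \cdot x \le g(x) \le 1$ on $\mathcal{B}$ supplies the supporting-line inequality, and $L_\varrho$ is the desired supporting line at $\varpi$. The main point requiring care is the uniform-integrability step, together with the bookkeeping that $\varpi$ itself need not be rational, so one must extract $\mathbb{E}_\mu(\varrho \cdot \varpi)$ from the two coordinate identities via the linearity of $g_\varpi$ rather than by applying Theorem \ref{thm: expected_value} directly at $\varpi$. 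Everything else follows immediately from results already proved.
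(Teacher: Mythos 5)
Your proposal is correct and follows essentially the same route as the paper: establish the a.s.\ bound $\varrho \cdot x \le g(x)$ (hence $\varrho \cdot x \le 1$ on $\mathcal{B}$) from \eqref{eq: fboundtau} together with the shape theorem, compute $\mathbb{E}_\mu(\varrho\cdot\varpi)=1$ from Theorem~\ref{thm: expected_value}, and combine. The only structural difference is minor: the paper extends Theorem~\ref{thm: expected_value} directly to real arguments (via the lattice-rounding convention) to get $\mathbb{E}_\mu(\varrho\cdot y)=g_\varpi(y)$ in one stroke, citing dominated convergence with the bound $|f(0,my)|\le\tau(0,my)$; you instead compute $\mathbb{E}_\mu\varrho_i=g_\varpi(\unitv_i)$ coordinate-by-coordinate and then use linearity of $g_\varpi$, with the uniform-integrability justification spelled out via the $L^2$-subadditivity bound $\|\tau(0,kp)\|_{L^2}\le k\|\tau(0,p)\|_{L^2}$. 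Your explicit $L^2$ argument is, if anything, a cleaner justification of the interchange of limit and expectation than the paper's one-line appeal to dominated convergence (where the dominating function varies with $m$, so strictly speaking uniform integrability is what is being used); otherwise the two proofs are the same.
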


This theorem has an important corollary. It follows directly from the fact that there is a unique supporting line for $\mathcal{B}$ at points of differentiability of $\partial \mathcal{B}$.
\begin{cor}
\label{cor: diff}
If $\partial \mathcal{B}$ is differentiable at $\varpi$ then
\[ 
\mu\big( \varrho = (g_\varpi(\unitv_1),g_\varpi(\unitv_2)) \big) = 1\ .
\]
\end{cor}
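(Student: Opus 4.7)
The plan is to combine Theorem~\ref{rational_directions} with the hypothesis of differentiability and a short linear-algebra argument to identify $\varrho$ with the explicit vector $v := (g_\varpi(\unitv_1), g_\varpi(\unitv_2))$.

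First, Theorem~\ref{rational_directions} gives an event of $\mu$-probability one on which the affine line $L_\varrho = \{x : \varrho \cdot x = 1\}$ is a supporting line for $\mathcal{B}$ at $\varpi$. I will work on this event. Next I observe that the linear functional $g_\varpi$ satisfies $g_\varpi(x) = v \cdot x$ for all $x \in \mathbb{R}^2$, where $v$ is the (deterministic) vector defined above. By the choice of $g_\varpi$, $g_\varpi$ attains its maximum on $\mathcal{B}$ at $\varpi$ with value $g_\varpi(\varpi) = 1$, so $L_v := \{x : v \cdot x = 1\}$ is also a supporting line for $\mathcal{B}$ at $\varpi$.

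Now I invoke differentiability of $\partial \mathcal{B}$ at $\varpi$: this means $\mathcal{B}$ has a unique supporting line at $\varpi$. Therefore $L_\varrho = L_v$ almost surely. To conclude $\varrho = v$ from this identity of lines, note that the origin lies in the interior of $\mathcal{B}$ (which is bounded with nonempty interior and symmetric about the axes), so $0 \notin L_\varrho = L_v$; hence the two linear equations $\varrho \cdot x = 1$ and $v \cdot x = 1$ determine the same affine line not through the origin. Any two such equations must have proportional coefficient vectors, and evaluating both at any common point of the line (e.g.\ $\varpi$, where both equal $1$) forces the proportionality constant to be $1$, so $\varrho = v$, $\mu$-almost surely.

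There is essentially no obstacle here; the argument is a direct consequence of Theorem~\ref{rational_directions} together with the defining property of differentiability (uniqueness of supporting line). The only small point to verify carefully is that $L_\varrho$ is an affine line not through $0$, so that the passage from ``same line'' to ``same coefficient vector'' is valid; this is immediate from $\varrho \cdot \varpi = 1$ and the bound \eqref{eq: normequivalence} placing $0$ strictly inside $\mathcal{B}$.
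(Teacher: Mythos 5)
Your proof is correct and is essentially the same argument the paper uses: the paper deduces the corollary in one sentence from Theorem~\ref{rational_directions} and the uniqueness of the supporting line at a point of differentiability, and you have simply spelled out the elementary linear-algebra step (both $L_\varrho$ and $L_{(g_\varpi(\unitv_1),g_\varpi(\unitv_2))}$ are supporting lines at $\varpi$, hence equal, hence have equal normal vectors since neither passes through the origin) that the paper leaves implicit.
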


\begin{proof}[Proof of Theorem~\ref{rational_directions}]

Using Theorem~\ref{thm: expected_value}, we first find the expected value of $\varrho \cdot y$ for $y \in \mathbb{R}^2$. We simply apply the dominated convergence theorem with the bound $|f(0,my)| \leq \tau(0,my)$. Letting $y_m \in \mathbb{Z}^2$ be such that $my \in y_m + [-1/2,1/2)^2$,
\[
\mathbb{E}_\mu (\varrho \cdot y) = \lim_{m \to \infty} \frac{1}{m}\mathbb{E}_\mu f(0,my) = \lim_{m \to \infty} g_\varpi(y_m/m) = g_\varpi(y)\ .
\]
The theorem follows from this statement and
\begin{equation}\label{eq: nachosgrande}
\mu\left(x \cdot \varrho \leq g(x) \text{ for all } x \in \mathcal{B}\right)=1\ .
\end{equation}
Indeed, assuming this, we have
\[
\mu(\varrho\cdot \varpi \leq 1)=1 \text{ and } \mathbb{E}_\mu(\varrho \cdot \varpi) = g_\varpi(\varpi) = 1\ ,
\]
giving $\varrho \cdot \varpi = 1$ with $\mu$-probability one. To prove \eqref{eq: nachosgrande}, first take $x \in \mathbb{Q}^2 \cap \mathcal{B}$. Then by \eqref{eq: fboundtau}, for all $n$, $f(nx) \leq \tau(nx)$ with $\mu$-probability one. Dividing by $n$ and taking limits with Proposition~\ref{prop: rationallimits} and the shape theorem we get $x \cdot \varrho \leq g(x)$. For non-rational $x \in \mathcal{B}$ we extend the inequality by almost sure continuity of both sides in $x$.
\end{proof}

\section{Geodesic graphs}\label{sec: GG}

In this section we study the behavior of $\mu$ on $\Omega_3$. Given $\eta \in \Omega_3$ recall from Section~\ref{sec: mudef} the definition of the geodesic graph $\mathbb{G}$ of $\eta$ as the directed graph induced by the edges $e$ for which $\eta(e)=1$. In this section we prove a fundamental property about infinite directed paths in this graph which relates them to the asymptotic Busemann function constructed from $\Theta$.

\subsection{Basic properties}

We begin by showing that properties of $\eta_\alpha$ from Section~\ref{subsec: GG} carry over to $\eta$. We use some new notation. We say that $y\in \mathbb{Z}^2$ is connected to $z \in \mathbb{Z}^2$ in $\mathbb{G}$ (written $y \to z$) if there exists a sequence of vertices $y=y_0, y_1, \ldots, y_n = z$ such that $\eta(\langle y_k,y_{k+1}\rangle) = 1$ for all $k=0, \ldots, n-1$. We say that a path in $\mathbb{G}$ is a geodesic (for the configuration $(\omega,\Theta,\eta)$) if it is a geodesic in $\omega$.

\begin{prop}\label{prop: firstGG2}
With $\mu$-probability one, the following statements hold for $x,y,z \in \Z2$.
\begin{enumerate}
\item Each directed path in $\mathbb{G}$ is a geodesic.
\item If $x \to y$ in $\mathbb{G}$ then $f(x,y) = \tau(x,y)$.
\item If $x \to z$ and $y \to z$ in $\mathbb{G}$ then $f(x,y) = \tau(x,z) - \tau(y,z)$.
\item There exists an infinite self-avoiding directed path starting at $x$ in $\mathbb{G}$.
\end{enumerate}
\end{prop}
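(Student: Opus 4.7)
The plan is to transfer each analogous property for $\mathbb{G}_{L_\alpha}$ from Proposition~\ref{prop: firstGG} through the weak limit defining $\mu$, using Portmanteau in its open-set form~\eqref{eq: kallenbergopen}. The central observation I will exploit is that for a fixed finite path $\gamma$ the map $\tau_\gamma(\omega)$ and (for fixed endpoints) $f(x,y)(\Theta)$ are continuous on $\widetilde\Omega$, while $\tau(x,y) = \inf_{\gamma'}\tau_{\gamma'}$ is only upper semi-continuous as an infimum of continuous functions. Under each $\mu_\alpha$, the $\Theta$-coordinate is by definition the increments of $B_\alpha$, so path-additivity forces $f(x,y) = B_\alpha(x,y)$ identically under $\mu_\alpha$; this is what lets me carry identities such as $B_\alpha(x,y) = \tau(x,y)$ on geodesics (Proposition~\ref{prop: firstGG}(2)) through the limit.

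For item~1 I will fix a finite directed path $\gamma$ from $x$ to $y$ and consider
\[
F_\gamma = \{\eta(e) = 1 \text{ for all } e \in \gamma\} \cap \{\tau_\gamma > \tau(x,y)\},
\]
which is open, since the first factor is clopen and $\tau_\gamma - \tau(x,y)$ is lower semi-continuous. By Proposition~\ref{prop: firstGG}(1), $\mu_\alpha(F_\gamma) = 0$ for every $\alpha$, so $\mu_n^*(F_\gamma) = 0$, and~\eqref{eq: kallenbergopen} gives $\mu(F_\gamma) = 0$; a countable union over $\gamma$ yields item~1. Item~2 will follow the same pattern with the open event $\{\eta(e) = 1\ \forall e\in\gamma\} \cap \{f(x,y) < \tau_\gamma\}$ (open because $\tau_\gamma - f(x,y)$ is continuous): under $\mu_\alpha$ this event is null by the identity $f = B_\alpha$ together with Proposition~\ref{prop: firstGG}(1)-(2), so $f(x,y) \geq \tau_\gamma$ $\mu$-a.s.\ whenever $\gamma$ is directed in $\mathbb{G}$; combining with $|f(x,y)| \leq \tau(x,y)$ from~\eqref{eq: fboundtau} and item~1 gives $f(x,y) = \tau(x,y)$. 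Item~3 is then immediate from item~2 and the additivity~\eqref{eq: additivity} of $f$: $f(x,y) = f(x,z) - f(y,z) = \tau(x,z) - \tau(y,z)$.

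The hard part is item~4, which I plan in two steps. First, for each $N$ and $x$, the event $A_N^x$ that there is a self-avoiding directed path of graph-length $\geq N$ from $x$ in $\mathbb{G}$ is a countable union of cylinder events in $\eta$ and hence open. Under $\mu_\alpha$, $\mathbb{P}$-a.s.\ there is a self-avoiding geodesic from $x$ to $L_\alpha$; using~\eqref{eq: gvarpibound} and~\eqref{eq: normequivalence} any such path has $\ell^1$-length at least $c(\alpha - g_\varpi(x))$, so $\mu_\alpha(A_N^x) = 1$ for large $\alpha$, giving $\mu_n^*(A_N^x) \to 1$ and, by~\eqref{eq: kallenbergopen}, $\mu(A_N^x) = 1$. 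Second, on $\bigcap_N A_N^x$ the tree of finite self-avoiding directed paths in $\mathbb{G}$ rooted at $x$ has out-degree at most $4$ and arbitrarily long branches, so Kőnig's lemma produces the desired infinite self-avoiding directed path; a countable intersection over $x \in \mathbb{Z}^2$ completes the proof. The main care points are verifying the correct open/closed direction for each Portmanteau application (so the $\mu_\alpha$-probability zero conclusion transfers) and the combinatorial extraction of a single infinite self-avoiding path from the sequence of arbitrarily long ones.
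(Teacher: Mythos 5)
Your argument for items 1--3 is correct and is the natural dual of the paper's: where the paper shows a \emph{closed} event (e.g.\ $B_\gamma = A_\gamma^c \cup (A_\gamma \cap \{\gamma \text{ is a geodesic}\})$) has $\mu_\alpha$-probability 1 and uses \eqref{eq: kallenbergclosed}, you show the complementary \emph{open} event has $\mu_\alpha$-probability 0 and use \eqref{eq: kallenbergopen}. The key observation that $\tau(x,y)$ is upper semi-continuous as an infimum of continuous functions, so that $\{\tau_\gamma > \tau(x,y)\}$ is open, is exactly the right way to make this work. Item 3 is the same one-line deduction as in the paper.

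There is a genuine (if easily repaired) error in item 4. You want to conclude $\mu(A_N^x) = 1$ from $\mu_{n_k}^*(A_N^x) \to 1$, and you cite the open-set inequality \eqref{eq: kallenbergopen}. But that inequality reads $\liminf_k \nu_k(G) \geq \nu(G)$ for open $G$, which only gives $\mu(A_N^x) \leq 1$ --- vacuous. To push probability-1 statements through the limit you need the closed-set inequality \eqref{eq: kallenbergclosed}, $\limsup_k \nu_k(F) \leq \nu(F)$, applied to a closed $F$. Fortunately $A_N^x$ is a \emph{finite} (not merely countable) union of cylinder events --- there are only finitely many self-avoiding directed paths of length exactly $N$ starting at $x$, and ``length $\geq N$'' is equivalent to ``length $=N$'' after truncation --- so $A_N^x$ is clopen, its indicator is continuous, and the paper's route (direct convergence of $\mu_{n_k}^*(A_N^x)$ to $\mu(A_N^x)$ by weak convergence against a bounded continuous function) or equivalently \eqref{eq: kallenbergclosed} applies. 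Once that is fixed, your K\H{o}nig's-lemma extraction of an infinite self-avoiding directed path from $\bigcap_N A_N^x$ is correct, and is in fact a welcome elaboration of a step the paper leaves implicit (``Taking $n\to\infty$ will prove item~4'').

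One more small point: in item 4 your lower bound on the $\ell^1$-length uses $g_\varpi(y-x) = \alpha - g_\varpi(x)$ for $y \in L_\alpha$, then $g_\varpi \leq g \leq C_2 \|\cdot\|_2 \leq C_2\|\cdot\|_1$. This is fine, but remember that the passage time is taken to $\hat L_\alpha$ rather than $L_\alpha$; this only shifts the bound by an additive constant and does not affect the conclusion, but it is worth saying so explicitly.
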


\begin{proof}
The third item follows directly from the second and additivity of $f$ (from \eqref{eq: additivity}). For the first item, if $\gamma$ is a deterministic finite directed path, write $A_\gamma$ for the event that all edges of $\gamma$ are edges of $\mathbb{G}$ and
\[
B_\gamma = A_\gamma^c \cup \left( A_\gamma \cap \{\gamma \text{ is a geodesic}\}\right)\ .
\]
The event in question equals the intersection over all finite $\gamma$'s of $B_\gamma$, so it suffices to show that for each $\gamma$, $\mu(B_\gamma)=1$.

By part 1 of Proposition~\ref{prop: firstGG}, for all $\alpha \in \mathbb{R}$ the $\mathbb{P}$-probability that all directed paths in $\mathbb{G}_\alpha(\omega)$ are geodesics is 1. By pushing forward to $\widetilde \Omega$, for each $\alpha,~ \mu_\alpha(B_\gamma) = 1$ and thus $\mu_n^*(B_\gamma) = 1$ for all $n$. Once we show that $B_\gamma$ is a closed event, we will be done, as we can then apply \eqref{eq: kallenbergclosed}. To show this we note that the event that a given finite path is a geodesic is a closed event. Indeed, letting $\gamma_1$ and $\gamma_2$ be finite paths, the function $\tau(\gamma_1) - \tau(\gamma_2)$ is continuous on $\widetilde \Omega$. Therefore the event $\{\omega \in \Omega_1 : \tau(\gamma_1) \leq \tau(\gamma_2)\}$ is closed. We then write
\[
\{\gamma_1 \text{ is a geodesic}\} = \bigcap_{\gamma_2} \{\tau(\gamma_1) \leq \tau(\gamma_2)\}\ ,
\]
where the intersection is over all finite paths $\gamma_2$ with the same endpoints as those of $\gamma_1$. Thus $\{\gamma_1 \text{ is a geodesic}\}$ is closed. Since $A_\gamma$ depends on finitely many edge variables $\eta(e)$, it is closed and its complement is closed. Therefore $B_\gamma$ is closed and we are done.

For item 2, we write $\gamma_{xy}$, any path from $x$ to $y$ in $\mathbb{G}$, in order as $x=x_0, x_1, \ldots, x_n = y$ and use additivity of $f$:
\[
f(x,y) = \sum_{i=0}^{n-1} f(x_i,x_{i+1})\ .
\]
For each $i$, $x_i \to x_{i+1}$, and by item 1, $\gamma_{xy}$ is a geodesic. This means that we only need to show that if $x$ and $y$ are neighbors such that $\eta(\langle x,y \rangle) = 1$ then $f(x,y) = \omega_{\langle x,y \rangle}$, the passage time of the edge between $x$ and $y$. By part 2 of Proposition~\ref{prop: firstGG}, for each $\alpha$, with $\mathbb{P}$-probability one, if $\eta_\alpha(\langle x,y \rangle) = 1$ then $B_\alpha(x,y) = \omega_{\langle x,y \rangle}$. By similar reasoning to that in the last item,
\[
\{\eta(\langle x,y \rangle) = 0\} \cup \left( \{ \eta(\langle x,y \rangle) = 1\}  \cap \{f(x,y) = \omega_{\langle x,y \rangle} \} \right)
\]
is closed and since it has $\mu_\alpha$-probability 1 for all $\alpha$, it also has $\mu$-probability one.

We now argue for item 4. By translation-invariance we can just prove it for $x=0$. For $n \geq 1$ let $A_n \subseteq \Omega_3$ be the event that there is a self-avoiding directed path starting at 0 in $\mathbb{G}$ that leaves $[-n,n]^2$. We claim that $\mu(A_n) = 1$ for all $n$. Taking $n \to \infty$ will prove item 4. 

For each $\alpha>0$ so large that $[-n,n]^2$ is contained on one side of $L_\alpha$, let $\gamma$ be a geodesic from $0$ to $L_\alpha$. This path is contained in $\mathbb{G}_\alpha$. We may remove loops from $\gamma$ so that it is self-avoiding, and still a geodesic. It will also be directed in the correct way: as we traverse the path from 0, each edge will be directed in the direction we are traveling. So for all large $\alpha>0$, with $\mathbb{P}$-probability one, there is a self-avoiding directed path starting at 0 in $\mathbb{G}_\alpha$ that leaves $[-n,n]^2$. Thus $\mu_\alpha(A_n) = 1$ for all large $\alpha$ and $\mu_{n_k}^*(A_n) \to 1$ as $k \to \infty$. The indicator of $A_n$ is continuous on $\widetilde \Omega$, as $A_n$ depends on $\eta(f)$ for finitely many edges $f$, so $\mu(A_n)=1$.
\end{proof}

\begin{prop}\label{prop: secondGG2}
Assume {\bf A1'} or {\bf A2'}. With $\mu$-probability one, the following statements hold.
\begin{enumerate}
\item Each vertex in $\Z2$ has out-degree 1 in $\mathbb{G}$. Consequently from each vertex $x$ emanates exactly one infinite directed path $\Gamma_x$.
\item Viewed as an undirected graph, $\mathbb{G}$ has no circuits. 
\end{enumerate}
\end{prop}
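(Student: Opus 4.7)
\medskip

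\noindent\textbf{Proof plan.} My plan is to transport Proposition~\ref{prop: secondGG} from finite-$\alpha$ to the subsequential weak limit $\mu$ by exploiting the topology of $\Omega_3 = \{0,1\}^{\vec{\mathcal{E}}^2}$: every event determined by finitely many coordinates $\eta(f)$ is a cylinder set, and hence \emph{clopen}. For such clopen events, the weak convergence $\mu_{n_k}^* \to \mu$ yields equality of limiting probabilities, not merely a one-sided Portmanteau inequality. This is exactly the feature needed to promote ``out-degree equals one" and ``no cycle of shape $C$" from finite-$\alpha$ statements to $\mu$-a.s.\ statements.

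For item 1, I would fix $x \in \mathbb{Z}^2$ and analyze the clopen event $E_x = \{\text{out-degree of } x \text{ in } \mathbb{G} = 1\}$, which depends only on the four oriented edges leaving $x$. Under {\bf A1'} or {\bf A2'}, Proposition~\ref{prop: secondGG} applied to $S = L_\alpha$ shows that whenever $x \notin \hat L_\alpha$ one has $\mu_\alpha(E_x) = 1$. The set of $\alpha \in [0,n]$ with $x \in \hat L_\alpha$ has length uniformly bounded in $n$ (depending only on $x$ and $\varpi$), so
\[
\mu_n^*(E_x) \geq \frac{n-C(x)}{n} \longrightarrow 1,
\]
and clopenness of $E_x$ gives $\mu(E_x) = 1$. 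A countable intersection over $x$ yields out-degree one at every vertex simultaneously. The ``consequently" clause then follows once item 2 is in hand: iterating the unique out-edge from $x$ produces an infinite walk because no vertex is a sink, and the absence of undirected circuits (item 2) prevents the walk from revisiting any vertex, so the resulting path $\Gamma_x$ is infinite, self-avoiding, and uniquely determined by $x$.

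For item 2, I would index the countable collection of finite undirected cycles $C$ in $\mathbb{Z}^2$ and consider the event $F_C = \{\text{the undirected graph underlying } \mathbb{G} \text{ does not contain } C\}$. An undirected edge $\{u,v\}$ of $C$ belongs to the graph iff $\eta(\langle u,v\rangle) = 1$ or $\eta(\langle v,u\rangle) = 1$, so $F_C$ is a finite Boolean combination of cylinder events and hence clopen. Proposition~\ref{prop: secondGG} gives $\mu_\alpha(F_C) = 1$ for every $\alpha$, so $\mu_n^*(F_C) = 1$ for every $n$, and weak convergence on the clopen set $F_C$ yields $\mu(F_C) = 1$. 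A countable intersection over $C$ completes the argument.

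The main obstacle to avoid is merely bookkeeping: one must verify that the events passed through the limit are genuinely cylinder-defined rather than only closed. The Portmanteau theorem supplies only $\mu(A) \geq \limsup_k \mu_{n_k}^*(A)$ for closed $A$ and the reverse inequality for open $A$; to promote ``$\mu_n^*(A) \to 1$" to ``$\mu(A) = 1$" one needs both directions, which the discrete product structure of $\Omega_3$ supplies automatically once each relevant event is reduced to finitely many $\eta$-variables. Once this observation is in place, both assertions reduce immediately to Proposition~\ref{prop: secondGG}.
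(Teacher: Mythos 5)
Your proposal is correct and follows essentially the same route as the paper: both proofs observe that the relevant events (out-degree one at a fixed vertex; absence of a fixed circuit) are cylinder events in $\Omega_3$ with continuous indicator functions, so weak convergence of $\mu_{n_k}^*$ transfers the $\mu_\alpha$-a.s.\ statements from Proposition~\ref{prop: secondGG} to $\mu$-a.s.\ statements, followed by a countable union over vertices and circuits. The only minor difference is cosmetic: the paper phrases item~2 via the complementary event that all edges of a given circuit lie in $\mathbb{G}$, but this is equivalent to your $F_C$.
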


\begin{proof}
For $x \in \mathbb{Z}^2$, let $A_x\subseteq \widetilde \Omega$ be the event that $\eta(\langle x,y\rangle) = 1$ for only one neighbor $y$ of $x$. Note that the indicator of $A_x$ is a bounded continuous function, so since $\mu_\alpha(A_x) = 1$ for all $\alpha$ such that $x$ is not within Euclidean distance $1$ of $L_\alpha$ (from part 1 of Proposition~\ref{prop: secondGG} -- here $\hat S$ is contained in the set of vertices within distance 1 of $L_\alpha$) it follows that $\mu(A_x)=1$. For each $z$ that is not a neighbor of $x$, $\eta(\langle x,z \rangle)=0$ with $\mu_\alpha$-probability one for all $\alpha$. This similarly implies that in $\mathbb{G}$ with $\mu$-probability one, there is no edge between $x$ and such a $z$.

To prove the second statement, fix any circuit $\mathcal{C}$ in $\mathbb{Z}^2$ and let $A_\mathcal{C}$ be the event that each edge of $\mathcal{C}$ is in $\mathbb{G}$. Because there are no circuits in $\mathbb{G}_\alpha$ with $\mathbb{P}$-probability one, we have $\mu_n^*(A_\mathcal{C})=0$ for all $n$. The indicator of $A_\mathcal{C}$ is a continuous function on $\widetilde \Omega$, so we may take limits and deduce $\mu(A_\mathcal{C})=0$. There are a countable number of circuits, so we are done. 
\end{proof}

\subsection{Asymptotic directions}
Recall the definition $L_\varrho = \{x \in \mathbb{R}^2: x \cdot \varrho = 1\}$ for the vector $\varrho = \varrho(\Theta)$ of Theorem~\ref{thm: pizzapie}. Set 
\begin{equation}\label{eq: on_skype}
J_\varrho = \{ \theta : L_\varrho \text{ touches } \mathcal{B} \text{ in direction } \theta\}\ .
\end{equation}
The main theorem of this subsection is as follows.
\begin{thm}\label{thm: nachostheorem}
With $\mu$-probability one, for all $x \in \Z2$, the following holds. Each directed infinite self-avoiding path in $\mathbb{G}$ which starts at $x$ is asymptotically directed in $J_{\varrho}$.
\end{thm}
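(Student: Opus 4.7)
The plan is to couple the asymptotic behavior of $f$ along directed paths in $\mathbb{G}$ to the usual passage-time shape theorem, using the identity $f(x,y)=\tau(x,y)$ whenever $x\to y$ in $\mathbb{G}$. The definition of $J_\varrho$ says that the supporting line $L_\varrho$ of $\mathcal{B}$ meets $\mathcal{B}$ exactly at the points $v_\theta$ with $\theta\in J_\varrho$, and by Theorem~\ref{rational_directions} we have $\varrho\cdot v\le g(v)$ for all $v\in\mathbb{R}^2$ with equality iff $v/g(v)$ belongs to $L_\varrho\cap\partial\mathcal{B}$. So the $f$-shape theorem and the $\tau$-shape theorem, compared through $f=\tau$ on the graph, force the asymptotic direction of any directed ray in $\mathbb{G}$ to land on this contact set.

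Concretely, first fix a $\mu$-probability one event $\mathcal{A}$ on which (i) every finite directed path in $\mathbb{G}$ is a geodesic and $f(x,y)=\tau(x,y)$ whenever $x\to y$ in $\mathbb{G}$ (Proposition~\ref{prop: firstGG2}), (ii) the reconstructed-Busemann shape theorem holds, namely for every $\delta>0$ there is $M=M(\delta,\omega,\Theta)$ with
\[
|f(0,z)-z\cdot\varrho|<\delta\|z\|_1\quad\text{for all }z\in\mathbb{Z}^2\text{ with }\|z\|_1\ge M,
\]
(Theorem~\ref{shapetheorem}), and (iii) the classical FPP shape theorem holds, i.e.\ $|\tau(0,z)-g(z)|<\delta\|z\|_1$ for all such $z$ (this is $\mu$-a.s.\ because the $\Omega_1$-marginal of $\mu$ is $\mathbb{P}$). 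All three events have full $\mu$-measure, so $\mathcal{A}$ does too.

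Now fix $(\omega,\Theta,\eta)\in\mathcal{A}$, $x\in\mathbb{Z}^2$, and let $\gamma=(x=x_0,x_1,\dots)$ be a self-avoiding directed path in $\mathbb{G}$. Suppose for contradiction $\gamma$ is not asymptotically directed in $J_\varrho$: there is $\e>0$ and a subsequence $x_{n_k}$ with $\arg x_{n_k}\notin(J_\varrho)_\e$. Self-avoidance in $\mathbb{Z}^2$ forces $\|x_{n_k}\|_1\to\infty$, and by $\ell^1$-compactness (passing to a further subsequence) we get $y_k:=x_{n_k}/\|x_{n_k}\|_1\to y$ for some unit vector $y$ with $\arg y\notin J_\varrho$ (the complement of $(J_\varrho)_\e$ is closed in $S^1$). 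Since $x\to x_{n_k}$ in $\mathbb{G}$, Proposition~\ref{prop: firstGG2}(2) gives $f(x,x_{n_k})=\tau(x,x_{n_k})$; combining the two shape theorems with $f(x,x_{n_k})=f(0,x_{n_k})-f(0,x)$, for any $\delta>0$ and all large $k$,
\[
\bigl|\,g(x_{n_k}-x)-(x_{n_k}-x)\cdot\varrho\,\bigr|\;\le\;2\delta\|x_{n_k}\|_1+O(1).
\]
Dividing by $\|x_{n_k}\|_1$ and letting $k\to\infty$, the continuity of $g$ and of the linear functional $v\mapsto v\cdot\varrho$ yield $|g(y)-y\cdot\varrho|\le 2\delta$. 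Since $\delta>0$ was arbitrary, $g(y)=y\cdot\varrho$, so $y/g(y)\in L_\varrho\cap\partial\mathcal{B}$, hence $\arg y\in J_\varrho$, contradicting the choice of the subsequence.

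The main obstacle I anticipate is purely bookkeeping: one must assemble a single $\mu$-full-measure set on which the shape statements hold simultaneously for all starting points $x$ and on which all the graph-theoretic identities of Proposition~\ref{prop: firstGG2} hold for all $x,y$; this is handled by a countable intersection. Beyond that, verifying that the Busemann shape theorem's $o(\|x_{n_k}\|_1)$ error beats the constants coming from $x$ and $0$ is routine, and the geometric step at the end (contact set of a supporting line equals $\{v_\theta:\theta\in J_\varrho\}$) is immediate from the definition \eqref{eq: on_skype}.
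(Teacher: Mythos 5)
Your proof is correct and uses the same core ingredients as the paper's: the identity $f=\tau$ along directed paths in $\mathbb{G}$ (Proposition~\ref{prop: firstGG2}), the Busemann shape theorem (Theorem~\ref{shapetheorem}), the FPP shape theorem, and the geometric fact that $\varrho\cdot v\le g(v)$ with equality iff the argument lies in $J_\varrho$. The only presentational difference is that the paper runs a quantitative $\delta$-$a$-$M$ argument (using compactness of $(J_\varrho)_{\e_k}^c$ to extract a deterministic gap $a$ and then intersecting three high-probability events), whereas you pass to a convergent subsequence and derive $g(y)=y\cdot\varrho$ in the limit, which avoids the gap-finding step; this is essentially the same argument.
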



\begin{proof}
We will prove the theorem for $x=0$. Assuming we do this, then using translation invariance of $\mu$ and $\varrho$ it will follow for all $x$.

Let $\e_k = 1/k$ for $k \geq 1$ and $\delta>0$. We will show that if $S_0= \{x \in \mathbb{Z}^2 : 0 \to x$ in $\mathbb{G}\}$ then
\begin{equation}\label{eq: nachos1}
\text{ for each } k \geq 1,~ \mu(\arg x \in (J_\varrho)_{\e_k} \text{ for all but finitely many }  x \in S_0) > 1-\delta\ .
\end{equation}
Here we write $(J_\varrho)_{\e_k}$ for all angles $\theta$ with $dist(\theta,\theta')<\e_k$ for some $\theta' \in J_\varrho$. The line $L_\varrho$ only touches $\mathcal{B}$ in directions in $J_\varrho$ so by convexity, $v_\theta \cdot \varrho < 1$ for all $\theta \notin J_\varrho$. Since the set of angles not in $(J_\varrho)_{\e_k}$ is compact in $[0,2\pi)$ (using the metric $dist$), we can find a random $a \in (0,1)$ with $v_\theta \cdot \varrho < 1-a$ for all $\theta \notin (J_\varrho)_{\e_k}$. We can then choose $a$ to be deterministic such that
\begin{equation}\label{eq: nachos2}
\mu\left( v_\theta \cdot \varrho < 1-a \text{ for all } \theta \notin (J_\varrho)_{\e_k} \right) > 1-\delta/3\ .
\end{equation}

By the shape theorem there exists $M_0$ such that $M \geq M_0$ implies
\[
\mathbb{P}( \tau(0,x) \geq g(x)(1-a/2)  \text{ for all } x \text{ with } \|x\|_1 \geq M) > 1-\delta/3\ .
\]
The marginal of $\mu$ on $\Omega_1$ is $\mathbb{P}$ so this holds with $\mu$ in place of $\mathbb{P}$. By part 2 of Proposition~\ref{prop: firstGG2},  
\begin{equation}\label{eq: nachos3}
\mu( f(x) \geq g(x)(1-a/2) \text{ for all } x \text{ with } \|x\|_1 \geq M \text{ and } 0 \to x) > 1-\delta/3\ .
\end{equation}
Choose $C>0$ such that $\|x\|_1 \leq Cg(x)$ for all $x \in \mathbb{R}^2$. This is possible by \eqref{eq: normequivalence}. By Theorem~\ref{shapetheorem}, there exists $M_1 \geq M_0$ such that $M \geq M_1$ implies
\[
\mu\left( |f(x) - x\cdot\varrho|< \frac{a}{2C} \|x\|_1 \text{ for all } x \text{ with } \|x\|_1 \geq M \right) > 1-\delta/3\ .
\]
This implies that for $M \geq M_1$,
\begin{equation}\label{eq: nachos4}
\mu\left( |f(x) - x\cdot\varrho| < \frac{a}{2} g(x) \text{ for all } x \text{ with } \|x\|_1 \geq M \right) > 1-\delta/3\ .
\end{equation}

We claim that the intersection of the events in \eqref{eq: nachos2}, \eqref{eq: nachos3} and \eqref{eq: nachos4} implies the event in \eqref{eq: nachos1}. Indeed, take a configuration in the intersection of the three events for some $M \geq M_1$. For a contradiction, assume there is an $x \in S_0$ with $\arg x \notin (J_\varrho)_{\e_k}$ and $\|x\|_1 \geq M$. Then 
\[
(x/g(x)) \cdot \varrho < 1-a \text{ by \eqref{eq: nachos2}}\ .
\]
However, since the event in \eqref{eq: nachos3} occurs and $\|x\|_1 \geq M$,
\[
f(0,x) \geq g(x)(1-a/2)\ .
\]
Last, as the event in \eqref{eq: nachos4} occurs,
\[
f(0,x) < x \cdot \varrho + \frac{a}{2} g(x)\ .
\]
Combining these three inequalities,
\[
g(x)(1-a/2) \leq x \cdot \varrho + (a/2)g(x) < g(x)(1-a) + (a/2)g(x)\  ,
\]
or $g(x)(1-a/2) < g(x)(1-a/2)$, a contradiction. This completes the proof.

\end{proof}

\section{Coalescence in $\mathbb{G}$}\label{sec: coalesceG}

In this section we prove that all directed infinite paths coalesce in $\mathbb{G}$. Recall that under either {\bf A1'} or {\bf A2'}, for $x \in \mathbb{Z}^2$, $\Gamma_x$ is the unique infinite directed path in $\mathbb{G}$ starting at $x$.

\begin{thm}\label{thm: Gcoalescethm}
Assume either {\bf A1'} or both {\bf A2'} and the upward finite energy property. With $\mu$-probability one, for each $x,y \in \mathbb{Z}^2$, the paths $\Gamma_x$ and $\Gamma_y$ coalesce.
\end{thm}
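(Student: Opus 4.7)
The plan is to argue by contradiction via a Burton--Keane--Licea--Newman-type scheme adapted to the limit measure $\mu$. First, by countability of $\mathbb{Z}^2 \times \mathbb{Z}^2$ and transitivity of the coalescence relation on vertices of $\mathbb{G}$, it is enough to show that $\mu(\Gamma_0 \text{ and } \Gamma_{\unitv_i} \text{ coalesce}) = 1$ for $i=1,2$: from these, coalescence of $\Gamma_x$ and $\Gamma_y$ for arbitrary $x,y$ follows by chaining along any lattice path from $x$ to $y$. Suppose for contradiction that $p := \mu(A) > 0$, where $A$ is the event that $\Gamma_0$ and $\Gamma_{\unitv_1}$ do not coalesce.

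Next, I would use the structural facts already established for $\mu$: by Proposition~\ref{prop: secondGG2}, $\mathbb{G}$ is a directed forest in which every vertex has out-degree $1$; by Theorem~\ref{thm: nachostheorem}, every $\Gamma_x$ is asymptotically directed in $J_\varrho$; and by our earlier translation-invariance lemma $\mu$ is $\tilde T_m$-invariant, while its $\Omega_1$-marginal is $\mathbb{P}$, so the upward finite-energy property persists on the $\omega$-coordinate. On $A$, then, $\Gamma_0$ and $\Gamma_{\unitv_1}$ are two vertex-disjoint self-avoiding infinite paths departing to a common asymptotic sector.

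The core argument has two steps. In the first step (trifurcation creation), translation invariance of $\mu$ upgrades $A$ to an event of probability $p$ for each pair $(x,x+\unitv_1)$, so the set of $x$ with $(\Gamma_x,\Gamma_{x+\unitv_1})$ non-coalescing has density $p$. I then plan to show that any fixed vertex $v$ can be made into a \emph{trifurcation} of $\mathbb{G}$ with positive probability: one where three edges incident to $v$ lie on pairwise non-coalescing infinite backward subtrees. At the pre-limit level $\mu_\alpha$ the geodesic graph $\mathbb{G}_\alpha$ depends only on the passage times to $L_\alpha$, and upward finite energy on $\mathbb{P}$ permits one to raise selected $\omega$-weights in a bounded neighborhood of $v$ with positive conditional probability so as to force every geodesic to $L_\alpha$ passing near $v$ to coalesce at $v$ through a prescribed triple of edges; combined with the assumed positive density of non-coalescing pairs nearby (which plants three distinct ends into the backward tree of $v$), this yields positive $\mu_\alpha$-probability of a trifurcation at $v$, uniformly in $\alpha$. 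The trifurcation event is a cylinder in the $\eta$-coordinate intersected with a closed event in $\omega$, so it passes through the averages $\mu_n^*$ and the weak subsequential limit $\mu$ via the Portmanteau theorem, giving positive $\mu$-density of trifurcations. In the second step (planar counting), standard Burton--Keane yields a contradiction: since $\mathbb{G}$ is acyclic and planar, each trifurcation $v \in B_N = [-N,N]^2$ may be injectively associated with a triple of distinct vertices on $\partial B_N$ by following its three infinite arms until they first exit $B_N$, so the number of trifurcations in $B_N$ is at most $O(N)$; but translation invariance plus positive density gives $\gtrsim N^2$, a contradiction.

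The main obstacle is the trifurcation-creation step. Finite energy is a statement about $\mathbb{P}$, not about $\mu$, and in $\mu$ the $\eta$-coordinate is produced by a global limit in $\alpha$, so modifying finitely many $\omega_e$ need not produce local modifications of $\eta$ directly under $\mu$. One must therefore execute the resampling at the pre-limit level $\mu_\alpha$, where modifications of $\omega$ in a bounded set change only geodesics to $L_\alpha$ that actually pass through that set, and then carefully transfer positivity through $\mu_n^*$ and the weak limit using only cylinder/continuity properties. A secondary, more routine, technical point is verifying the Burton--Keane injection in the directed forest setting: the three ``arms'' at $v$ live in the backward tree of $v$ rather than in the undirected cluster of a percolation model, so one must identify them using the out-degree-$1$ structure and the no-undirected-circuits property from Proposition~\ref{prop: secondGG2} before appealing to planarity.
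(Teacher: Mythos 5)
Your overall orientation is right: this is indeed a Licea--Newman-style argument, and you correctly diagnose the central technical obstacle --- that the finite-energy resampling has to be performed at the pre-limit level on $\Omega_1$, where a bounded modification of $\omega$ has a controlled local effect on $\eta_\alpha$, and then transferred to $\mu$ through cylinder events and the Portmanteau theorem. The paper does exactly this. However, both your edge-modification target and your concluding counting argument differ from the paper's, and the concluding argument as you have set it up has a genuine gap.

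The gap is in the trifurcation step. A trifurcation (encounter point) in a directed forest with out-degree $1$ requires that removal of $v$ split its undirected component into at least three \emph{infinite} pieces. Exactly one of these can lie in the forward direction; the remaining two must be infinite branches of the backward cluster $C_v$. But at this stage of the argument nothing tells you that backward clusters are (or can be made) infinite --- indeed, the paper later \emph{proves} they are a.s.\ finite (Theorem~\ref{no_back_path}), and does so \emph{using} coalescence. Planting three disjoint forward paths $\Gamma_{z_1}, \Gamma_{z_2}, \Gamma_{z_3}$ through $v$ cannot help: since out-degree is $1$, beyond $v$ all three agree, so the $\Gamma_{z_i}$ contribute only finite stubs to $C_v$, not three infinite ends. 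Likewise, ``forcing every nearby geodesic to coalesce at $v$'' is the wrong target for an \emph{upward} finite-energy argument: you can block routes by raising weights, but you cannot make a corridor through $v$ attractive without lowering others, and even if you funnel geodesics through $v$ you still do not obtain two infinite backward arms. The Burton--Keane $O(N)$ vs.\ $N^2$ count therefore has nothing to count.

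What the paper does instead is dual to this. After reducing, via Lemma~\ref{lem: varpilemma} and an ergodic translation argument, to three pairwise disjoint forward paths $\Gamma_0, \Gamma_{x_1}, \Gamma_{x_2}$ emanating from a common line $L'_0$, it pulls cylinder approximants back to $\Omega_1$ and uses Lemma~\ref{lem: edge_modification} to \emph{raise} weights on a block of edges $U'_{\mathcal E}$ lying between $\Gamma_0$ and $\Gamma_{x_2}$. This does not force coalescence; it builds a \emph{barrier} that prevents any geodesic from a vertex $z\preceq L'_N$, $z\notin S(R,N)$, from crossing into the middle and merging with the central path $\Gamma_{x^*}$. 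The conclusion is then not Burton--Keane but a mass-transport argument: on the barrier event, some $y\in S(R,N)$ is the ``progenitor'' (least element of $C_w$ in a fixed lexicographic order) for \emph{infinitely many} $w$ on $\Gamma_{x^*}$, whence $\sum_y m(y,x)=\infty$ for some $x$, contradicting $\sum_y m(x,y)\le 1$ and Lemma~\ref{mass_transport}. The mass-transport route requires no infinite backward arms at all, which is precisely why it sidesteps the obstruction your scheme runs into. (The paper explicitly notes this is a different conclusion from Licea--Newman's own ``lack of space'' count.) If you want to retain a Burton--Keane flavor, you would need to replace trifurcations by a count of the \emph{shielded} translates along $L'_0$, which is closer to the original Licea--Newman lack-of-space argument; the trifurcation version, as stated, does not go through.
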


The proof will be long, so we first explain the main ideas. We apply the technique of Licea-Newman \cite{LN}, whose central tool is a Burton-Keane type argument \cite{burtonkeane}. We proceed by contradiction, so suppose there are vertices $x, y$ such that $\Gamma_x$ and $\Gamma_y$ do not coalesce. By results of the last section, they cannot even intersect. We show in Sections~\ref{sec: building_blocks} and~\ref{sec: Bprime} that there are many triples of non-intersecting paths $\Gamma_{x_1}, \Gamma_{x_2}$ and $\Gamma_{x_3}$ such that $\Gamma_{x_2}$ is ``shielded'' from all other infinite paths in $\mathbb{G}$. To do this, we must use the information in Theorem~\ref{thm: nachostheorem} about asymptotic directions. A contradiction comes in Section~\ref{sec: contradiction} from translation invariance because when $\Gamma_{x_2}$ is shielded, the component of $x_2$ in $\mathbb{G}$ has a unique least element in a certain lexicographic-like ordering of $\mathbb{Z}^2$. This is a different concluding argument than that given in \cite{LN}, where these shielded paths are used for a Burton-Keane ``lack of space'' proof.


We now give the proof. For the entirety we will assume either {\bf A1'} or both {\bf A2'} and the upward finite energy property.

\subsection{Constructing ``building blocks''}\label{sec: building_blocks}
Assume for the sake of contradiction that there are disjoint $\Gamma_x$'s in $\mathbb{G}$. Then for some vertex $z_0,$ the event $A_0(z_0)\subseteq \widetilde \Omega$ has positive $\mu$-probability, where 
\[
A_0(z_0) = \{\Gamma_{z_0} \text{ and } \Gamma_0 \text{ share no vertices}\}\ .
\]  
We begin with a geometric lemma. It provides a (random) line such that with probability one, any path that is asymptotically directed in $J_\varrho$ (from \eqref{eq: on_skype}) intersects this line finitely often. We will need some notation which is used in the rest of the proof.

Let $\varpi'$ be a vector with 
\begin{equation}\label{eq: varpiprimedef}
\arg \varpi' \in \{j \pi / 4,\, j = 0, \ldots,\,7\} \text{ and } \|\varpi'\|_\infty = 1\ ,
\end{equation}
where $\|\cdot\|_\infty$ is the $\ell^\infty$ norm. (A precise value of $j$ will be fixed shortly.) 
Define (for $N \in \mathbb{N}$) $L'_N = \{z \in \mathbb{R}^2: \, \varpi' \cdot z = N\}.$ For such an $N$ and for $x \in \Z2,$ write $x \prec L'_N$ if $\varpi' \cdot x < N$ and $x \succ L'_N$ if $\varpi' \cdot x > N.$ The symbols $\preceq$ and $\succeq$ are interpreted in the obvious way. We use the terms ``far side of $L'_N$" and ``near side of $L'_N$" for the sets of $x \in \mathbb{R}^2$ with $x \succ L'_N$ and $x \prec L'_N,$ respectively.  Note that any lattice path $\gamma$ intersecting both sides of $L'_N$ contains a vertex $z \in L'_N.$

\begin{lem}\label{lem: varpilemma}
There is a measurable choice of $\varpi'$ as in \eqref{eq: varpiprimedef} such that with $\mu$-probability one, the following holds. For each vertex $x$ and each integer $N$, 
\[
\Gamma_x \cap \{z \in \mathbb{Z}^2 : z \preceq L'_N\} \text{ is finite}\ .
\] 
In other words, $\Gamma_x$ eventually lies on the far side of $L'_N$ for all $x$ and $N$.
\end{lem}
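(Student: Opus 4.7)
The plan is to apply Theorem~\ref{thm: nachostheorem}, according to which every $\Gamma_x$ is almost surely asymptotically directed in $J_\varrho$, and to choose $\varpi'$ measurably so that the open half-plane $\{v : \varpi' \cdot v > 0\}$ contains every unit vector whose argument lies in the closed arc $\overline{J_\varrho}$. Granted such a $\varpi'$, each $\Gamma_x$ is self-avoiding (Proposition~\ref{prop: secondGG2}), so $\|x_k\|_2 \to \infty$; combined with a uniform lower bound $\varpi' \cdot (x_k/\|x_k\|_2) \geq c/2 > 0$ for large $k$, this gives $\varpi' \cdot x_k \to +\infty$, hence $x_k \succ L'_N$ for all but finitely many $k$ and any fixed integer $N$.

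The geometric core is to control the angular extent of $J_\varrho$. By Theorem~\ref{rational_directions}, $L_\varrho$ is a supporting line for $\mathcal{B}$ at $\varpi$ with $\varrho \cdot \varpi = 1 \neq 0$, so it avoids the origin; the intersection $L_\varrho \cap \mathcal{B}$ is a bounded closed segment $\sigma$, and $J_\varrho$, its angular projection from the origin, is a closed arc of $S^1$ of length strictly less than $\pi$. I would sharpen this using the $D_4$-symmetry of $\mathcal{B}$: $\sigma$ cannot be invariant under rotation by $\pi$ (which would force $\sigma$ to contain the origin, impossible since $0$ is interior to $\mathcal{B}$), so the stabilizer of $\sigma$ in $D_4$ has order $1$ or $2$ and its $D_4$-orbit has size $8$ or $4$. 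Because rays from $0$ meet $\partial\mathcal{B}$ at unique points, orbit images of $\sigma$ have pairwise disjoint angular projections summing to at most $2\pi$. This gives $|J_\varrho| \leq \pi/4$ when the orbit has size $8$. In the size-$4$ case $\sigma$ is preserved by one of the four reflections of $D_4$ (across a coordinate or diagonal axis), and a short case analysis shows that the geometric midpoint of $\sigma$ lies on the corresponding axis, forcing the angular midpoint $\theta_c$ of $J_\varrho$ to be a multiple of $\pi/4$ and giving $|J_\varrho| \leq \pi/2$.

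I would then define $\varpi' = \varpi'(\Theta)$ to be the $\ell^\infty$-unit vector with argument $j^*\pi/4$, where $j^*$ minimizes $\mathrm{dist}(\theta_c, j\pi/4)$ over $j \in \{0,\ldots,7\}$ under some deterministic tiebreaker; this is Borel in $\Theta$. In the orbit-$8$ case $\mathrm{dist}(\theta_c, j^*\pi/4) \leq \pi/8$ and $|J_\varrho|/2 \leq \pi/8$, while in the orbit-$4$ case the distance vanishes and $|J_\varrho|/2 \leq \pi/4$. Either way $\overline{J_\varrho}$ is a closed subset of $(j^*\pi/4 - \pi/2,\, j^*\pi/4 + \pi/2)$, so compactness yields a positive $c(\varrho)$ with $\varpi' \cdot v \geq c(\varrho)$ for all unit $v$ with $\arg v \in \overline{J_\varrho}$; continuity then produces $\epsilon > 0$ with $\varpi' \cdot v \geq c(\varrho)/2$ on $(J_\varrho)_\epsilon$, and the first paragraph completes the argument. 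I expect the main difficulty to be the $D_4$-orbit casework---verifying in the orbit-$4$ case that the midpoint of $\sigma$ lies on a symmetry axis of $\mathcal{B}$ and deriving the $\pi/2$ angular-extent bound---since this is precisely what makes the coarse eight-direction set $\{j\pi/4\}$ sufficient.
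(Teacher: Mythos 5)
Your proposal is correct and follows the same overall structure as the paper's proof: bound the angular extent of $J_\varrho$ by $\pi/2$ using the discrete symmetry of $\mathcal{B}$, pick one of the eight directions $j\pi/4$ as $\varpi'$ so that $J_\varrho$ has positive $dist$-distance from the complementary closed half-plane, and then combine the asymptotic-direction statement of Theorem~\ref{thm: nachostheorem} with $\|x_k\|\to\infty$ (self-avoidance) to get $\varpi'\cdot x_k\to\infty$. The route to the width bound is genuinely different in its details, though: the paper simply takes an extreme point $p$ of $\mathcal{B}$, notes that $p, Rp, R^2p, R^3p$ are extreme and hence none can lie in the relative interior of the contact segment $\sigma=L_\varrho\cap\mathcal{B}$, and concludes that the connected arc $J_\varrho$ sits inside one of the four $\pi/2$-arcs cut out by these extreme points --- a one-line argument. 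You instead run an orbit-stabilizer casework on the $D_4$-orbit of $\sigma$ itself, ruling out rotational stabilizers and splitting into the orbit-size-$8$ case ($|J_\varrho|\le\pi/4$) and the orbit-size-$4$ case ($|J_\varrho|\le\pi/2$ with angular midpoint on a symmetry axis). This is more work, but it buys you the cleaner conclusion that the angular center $\theta_c$ is always within $\pi/8$ of a multiple of $\pi/4$ (exactly on one, in the reflection-stabilized case), which makes the selection of $j^*$ and the separation bound very explicit; the paper instead deduces the separation more indirectly by counting which of the eight multiples of $\pi/4$ can lie inside $J_\varrho$ (at most three consecutive ones) and taking $\varpi'$ normal to the half-plane spanned by the five remaining. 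Both are sound, and the final step (measurability of $\varpi'$, then the shape-theorem/self-avoiding argument) is identical.
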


\begin{proof}
The limit shape $\mathcal{B}$ is convex and compact, so it has an extreme point $p$. Because it is symmetric with respect to the rotation $R$ of $\mathbb{R}^2$ by angle $\pi/2$, the points $p_i = R^i p$, $i=1, \ldots, 3$ are all extreme points of $\mathcal{B}$. $J_\varrho$ is an interval of angles corresponding to points of contact between $\mathcal{B}$ and one of its supporting lines, so it is connected (in the topology induced by $dist$) and must lie between (inclusively) $\arg p_i$ and $\arg p_{i+1}$ for some $i=0, \ldots, 3$ (here we identify $p_4=p_0$). Therefore $\mathrm{diam} ~J_\varrho \leq \pi/2$ almost surely and contains at most three elements of the set $\{j\pi/4 : j = 0, \ldots, 7\}$ (and they must be consecutive). Choose five of the remaining elements to be consecutive and label them $j_1\pi/4, \ldots, (j_1+4)\pi/4$. The interval $[j_1\pi/4,(j_1+4)\pi/4]$ defines a half-plane $H$ in $\mathbb{R}^2$ and since the distance between this interval and $J_\varrho$ is positive (measured with $dist$), for all sufficiently small $\e>0$, the sector
\[
\{x \in \mathbb{R}^2 : x \neq 0 \text{ and } dist(\arg x, \phi) < \e \text{ for some } \phi \in J_\varrho\}
\]
is contained in $H^c$. This implies the statement of the lemma for a (random) $\varpi'$ equal to the normal to $H$. Since $\varpi'$ can be chosen as a measurable function of $\varrho$ (which is clearly Borel measurable on $\widetilde \Omega$), we are done.
\end{proof}

For the rest of the proof, fix a deterministic $\varpi'$ as in \eqref{eq: varpiprimedef} that satisfies Lemma~\ref{lem: varpilemma} with positive probability on the event $A_0(z_0)$. (This is possible because there are only eight choices for $\varpi'$.) Let $A_0'(0,z_0)$ be the intersection of $A_0(z_0)$ and the event in the lemma. On $A_0'(0,z_0)$, $\Gamma_0$ and $\Gamma_{z_0}$ eventually cease to intersect $L'_0.$ In particular, they each have a last intersection with $L'_0.$ Since there are only countably many possible pairs of such last intersections, we see that some pair $(y, y')$ in $L'_0$ occurs with positive probability; that is, $\mu(A(y,y'))>0$, where $A(y,y')$ is defined by the conditions
\begin{enumerate}
\item[I.] $\Gamma_y \cap \Gamma_{y'} = \varnothing;$
\item[II.] $\Gamma_y$ intersects $L'_0$ only at $y$; $\Gamma_{y'}$ intersects $L'_0$ only at $y'$ and
\item[III.] $\Gamma_u \cap L_N'$ is nonempty and bounded for $u=y,y'$ and all integers $N \geq 0$.
\end{enumerate}
(Note that condition III follows directly from the preceding lemma because $\Gamma_u$ contains infinitely many vertices.) By translation invariance, there exists $z \in L_0'$ with $\mu(A(0,z))>0$.

Fix 
\begin{equation}\label{eq: chicken_alfredo}
\varsigma = \text{ a nonzero vector with the smallest integer coordinates normal to } \varpi'
\end{equation}
(it will be a rotation of either (0,1) or (1,1) by a multiple of $\pi/2$). Defining $\tilde T_\varsigma : \widetilde \Omega \to \widetilde \Omega$ as the translation by $\varsigma$ (that is, $\tilde T_1^{a_1} \circ \tilde T_2^{a_2}$, where $\varsigma = a_1\unitv_1 + a_2 \unitv_2$),
\[ 
\mathbf{1}_{A(0,z)}\left((\omega,\Theta,\eta)\right) = \mathbf{1}_{A(\varsigma,z+\varsigma)}\left(\widetilde{T}_{\varsigma}(\omega,\Theta,\eta)\right)\ .
\]
Since $\mu$ is invariant under the action of $\widetilde{T}_{\varsigma},$  the ergodic theorem implies
\begin{equation}
\label{poincare}
\frac{1}{N}\sum_{j=0}^{N-1}\mathbf{1}_{A(j\varsigma,z+j\varsigma)}\left((\omega,\Theta,\eta)\right) = \frac{1}{N}\sum_{j=0}^{N-1}\mathbf{1}_{A(0,z)}\left(\widetilde{T}_{\varsigma}^j (\omega,\Theta,\eta)\right) \rightarrow g(\omega,\Theta,\eta),
\end{equation}
where $g$ is a function in $L^1(\mu);$ the convergence is both $\mu$-almost sure and in $L^1(\mu),$ so $\int g \, \md \mu = \mu(A(0,z))>0.$ Using this in (\ref{poincare}) gives infinitely many $j$ with 
\begin{equation}
\label{two_pair}
\mu\left(A(0,z) \cap A(j\varsigma, z + j\varsigma) \right) > 0.
\end{equation}
We fix $j > \|z\|_1$ to ensure $\Gamma_{j\varsigma}$ and $\Gamma_{z + j\varsigma}$ are outside the region bounded by $L'_0,$ $\Gamma_0,$ and $\Gamma_z.$

What is the significance of the event in (\ref{two_pair})?  When it occurs, we are guaranteed that there is a line $L_0'$ and four directed paths remaining on its far side apart from their initial vertices.  We claim that at least three of them never intersect. Indeed, ordering the paths using the direction of $\varsigma$, we are guaranteed that the ``first two" paths do not intersect each other, nor do the ``last two."  But if the middle two paths ever intersect, they would merge beyond that point and the three remaining paths could not touch.

For $x_1,x_2 \in L_0'$, let $B(0,x_1,x_2)$ be the event that $\Gamma_0, \Gamma_{x_1}$ and $\Gamma_{x_2}$ (a) never intersect, (b) stay on the far side of $L_0'$ except for their initial vertices and (c) intersect $L_N'$ in a bounded set for each $N \geq 1$. Then the above implies 
\[
B(0,z,j\varsigma) \cup B(0,z,z+j\varsigma) \supseteq A(0,z) \cap A(j\varsigma,z + j\varsigma)\ .
\]
Therefore we may choose $x_1,x_2 \in L_0'$ such that the portion of $L_0'$ from 0 to $x_2$ contains $x_1$ and so that $\mu(B(0,x_1,x_2)) > 0$. The vertices $x_1$ and $x_2$ are fixed for the rest of the proof.

\subsection{Constructing $B'$}\label{sec: Bprime}

Our next step is to refine $B(0,x_1,x_2)$ to a positive probability subevent $B'(x^*;N,R)$ on which no paths $\Gamma_z$ with $z \preceq L'_N$ (outside of some large polygon) merge with $\Gamma_{x_1}.$ We will need to pull events back from $\widetilde \Omega$ to $\Omega_1$ to do an edge modification and this will present a considerable difficulty. Our strategy is reminiscent of that in \cite{AD}. In the first subsection we give several lemmas that we will need. In the next subsection we will define $B'$ and show it has positive probability.

\subsubsection{Lemmas for $B'$}

We wish to construct a barrier of high-weight edges on the near side of some $L'_N$. Set
\[ 
\lambda_0^+ = \sup \left\{ \lambda > 0: \, \passage\left( \omega_e \in [\lambda,\infty)\right) > 0 \right\}\ .
\]  
Because we do not wish to assume $\lambda_0^+ = \infty,$ our barrier will occupy some wide polygon (in the case that $\lambda_0^+ = \infty,$ many of the complications which we address below can be neglected; we direct the interested reader to \cite{LN}).
To control the exit of our directed paths from the polygon, we will need a lemma about weak angular concentration of paths:

\begin{lem}
\label{path_concentration}
For $x_1$, $x_2$, and $\varpi'$ as above, define $B_G(0,x_1,x_2)$ to be the subevent of $B(0,x_1,x_2)$ on which, for all $\varepsilon > 0,$ there are infinitely
many values of $N \in \mathbb{N}$ such that the first intersections $\zeta_N$ and $\zeta'_N$ of $\Gamma_0$ and $\Gamma_{x_2}$ (respectively) with $L'_N$ satisfy $dist(\arg \zeta_N, \arg \zeta'_N) < \varepsilon.$
Then $\mu\left(B_G (0,x_1,x_2) \mid \, B(0,x_1,x_2)\right) = 1.$
\end{lem}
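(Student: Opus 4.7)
The natural strategy is proof by contradiction. Suppose $\mu\!\left(B(0,x_1,x_2) \setminus B_G(0,x_1,x_2)\right) > 0$; then there exist a rational $\varepsilon > 0$ and a positive-$\mu$-measure event $A \subseteq B(0,x_1,x_2)$ on which $\mathrm{dist}(\arg\zeta_N,\arg\zeta'_N) \geq \varepsilon$ for every $N$ past a configuration-dependent threshold.

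First I would extract a planar ordering. On $B(0,x_1,x_2)$ the paths $\Gamma_0$ and $\Gamma_{x_2}$ are vertex-disjoint self-avoiding directed paths in the acyclic graph $\mathbb{G}$ (Proposition~\ref{prop: secondGG2}), meeting $L'_0$ only at $0$ and $x_2$ respectively. A Jordan curve argument in the closed strip bounded by $L'_0$ and $L'_N$, using that $x_2$ lies on the positive-$\varsigma$ side of $0$, forces $\zeta'_N - \zeta_N$ to point in the positive-$\varsigma$ direction for every $N \geq 1$. Since on $L'_N$ the angular coordinate (measured from the origin) is monotone in the $\varsigma$-coordinate once $N$ is large, this yields $\arg\zeta_N \leq \arg\zeta'_N$ for all large $N$, and on $A$ the stronger inequality $\arg\zeta'_N \geq \arg\zeta_N + \varepsilon$.

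Next I would introduce the translation-covariant extremes $M_y := \limsup_N \arg\zeta_N^y$ for $y \in \mathbb{Z}^2$, where $\zeta_N^y$ is the first intersection of $\Gamma_y$ with $L'_N$; each $M_y$ is bounded since asymptotically $\arg\zeta_N^y$ lies in $J_\varrho + o(1)$ by Theorem~\ref{thm: nachostheorem}. A direct shift calculation gives $M_0 \circ \tilde T_v = M_{-v}$ as functions on $\widetilde\Omega$ (the shift $\tilde T_v$ moves $\Gamma_0$ to $\Gamma_{-v}+v$ and $L'_N$ to $L'_{N-\varpi'\cdot v}$, and both displacements are angularly negligible as $N \to \infty$), so by $\tilde T_{-x_2}$-invariance of $\mu$ (Lemma~\ref{translate_1}) we obtain $M_{x_2} \underset{d}{=} M_0$, while step one provides $M_{x_2} \geq M_0$ on $B(0,x_1,x_2)$ and $M_{x_2} \geq M_0 + \varepsilon$ on $A$.

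The main obstacle is converting these facts into a contradiction, because the pointwise ordering $M_{x_2} \geq M_0$ is only known on the non-translation-invariant event $B(0,x_1,x_2)$. Were the ordering global, the identity $\mathbb{E}_\mu(M_{x_2} - M_0) = 0$ combined with $M_{x_2} - M_0 \geq \varepsilon\,\mathbf{1}_A$ would immediately yield $\varepsilon\mu(A) \leq 0$, a contradiction. My plan for closing the gap is to work within an ergodic decomposition of $\mu$ under the measure-preserving map $\tilde T_{-x_2}$ and restrict attention to an ergodic component $\mu^*$ with $\mu^*(A) > 0$: on such a component, Birkhoff's theorem applied to the telescoping sum $\frac{1}{n}\sum_{k=0}^{n-1}(M_{(k+1)x_2} - M_{kx_2}) = (M_{nx_2} - M_0)/n \to 0$, together with the positive density (in $k$) of the $\tilde T_{-x_2}^{-k}$-shifts of $A$ (each of which contributes an increment $\geq \varepsilon$), and supplemented by the analogous argument for $m_y := \liminf_N \arg\zeta_N^y$ and by the symmetric statement obtained by swapping the roles of $0$ and $x_2$, should force $\mu^*(A) = 0$. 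Executing this two-sided ergodic balance — ensuring that the bounded negative increments on complementary indices cannot compensate for the positive contributions — is the step I expect to require the most care.
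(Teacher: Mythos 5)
Your proposal shares the paper's opening move (translate by multiples of $\varsigma$ and invoke the ergodic theorem), but it diverges from there. The paper turns the translated events directly into a geometric contradiction: with positive probability one finds, for any $b$, at least $b$ pairwise-disjoint directed paths in $\mathbb{G}$ whose neighboring first-intersections with $L'_N$ stay $\varepsilon$ apart, and since all directed paths are asymptotically confined to a sector (Theorem~\ref{thm: nachostheorem}, with $\mathrm{diam}\,J_\varrho\leq\pi/2$), this is impossible once $b\varepsilon$ exceeds the available aperture. You instead try to run an expectation argument on the angular extremes $M_y$.

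The gap is at exactly the step you flag. You correctly observe that the pointwise inequality $M_{x_2}\geq M_0$ is only available on $B(0,x_1,x_2)$, and the needed contradiction would follow from $\mathbb{E}_\mu(M_{x_2}-M_0)=0$ combined with $M_{x_2}-M_0\geq\varepsilon\mathbf{1}_A$ \emph{if} the increment were nonnegative on $B^c$ as well. But on $B^c$ the increment genuinely can be negative with positive probability: for instance if $\Gamma_0$ dips back across $L'_0$, swings to the positive-$\varsigma$ side of $x_2$, and then heads to infinity while $\Gamma_{x_2}$ does not, the two paths can be disjoint with $M_0>M_{x_2}$. Nothing in your ergodic-decomposition/telescoping step rules this out. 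The telescoping identity $\frac{1}{n}\sum_{k<n}(M_{(k+1)x_2}-M_{kx_2})\to 0$ simply re-expresses $\mathbb{E}_{\mu^*}(M_{x_2}-M_0)=0$, and the positive-density shifts of $A$ give $\mathbb{E}_{\mu^*}[(M_{x_2}-M_0)\mathbf{1}_A]\geq\varepsilon\mu^*(A)$; but that only says $\mathbb{E}_{\mu^*}[(M_{x_2}-M_0)\mathbf{1}_{A^c}]\leq -\varepsilon\mu^*(A)$, which is perfectly consistent. The ``symmetric statement with $0$ and $x_2$ swapped'' does not rescue this either, since the geometric ordering comes from $x_2$ lying on the positive-$\varsigma$ side of $0$ and is not symmetric in the two basepoints. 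To close the gap you would essentially need to show $M_{x_2}\geq M_0$ whenever $\Gamma_0,\Gamma_{x_2}$ are disjoint, which requires the paths to stay above $L'_0$ --- precisely the content of the event $B$. The paper's ``stacking'' argument sidesteps this entirely, because it never needs sign control of the angular gap outside $B$: it just needs that $A$ and many of its $\varsigma$-translates occur simultaneously, and then compares the resulting angular spread to the sector bound.
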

\begin{proof}
Assume for the sake of contradiction that 
\begin{equation}
\label{paths_spreading}
\mu\left(B_G^c(0,x_1,x_2) \cap B(0,x_1,x_2)\right) > 0.
\end{equation}
For $z \in \mathbb{Z}^2$, denote by $\zeta_N(z)$ the first point of intersection of $\Gamma_z$ with $L'_N.$  On the event in (\ref{paths_spreading}), for all but finitely many $N \in \mathbb{N}$, we have $dist(\arg \zeta_N(0), \arg \zeta_N(x_2)) > \varepsilon / 2.$  Taking $\varsigma$ as before (fixed in \eqref{eq: chicken_alfredo}) and translating the event in (\ref{paths_spreading}) by multiples of $\varsigma$, we see by the ergodic theorem that with positive $\mu$-probability infinitely many such translates occur.

So given any finite $b > 0,$ we can find an event of positive $\mu$-probability on which we have at least $b$ directed paths in $\mathbb{G}$ which never return to $L'_0$ and such that the first intersections of neighboring paths with lines $L'_N$ stay at least an angle $\varepsilon$ apart. This is in contradiction with the fact that all directed infinite paths are asymptotically confined to a sector.
\end{proof}

The next lemma is a modification of the usual first-passage shape theorem.
\begin{lem}
\label{l1_shape}
There exists a deterministic $c^+ < \lambda_0^+$ such that, $\passage$-a.s., 
\[
\lim_{M \to \infty} \sup_{\|x\|_1 \geq M} \tau(0,x)/\|x\|_1 < c^+\ .
\]
\end{lem}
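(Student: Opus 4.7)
The plan is to reduce the bound on $\tau(0,x)/\|x\|_1$ to a bound on the limit-shape norm $g(x)/\|x\|_1$ via the shape theorem, then argue that the latter is strictly less than $\lambda_0^+$. First I would use the shape theorem in the form: for every $\e>0$, with $\passage$-probability one, $\tau(0,x) \leq (1+\e) g(x)$ for all $x$ with $\|x\|_1$ sufficiently large. Since $g$ is a norm on $\mathbb{R}^2$ and hence equivalent to $\|\cdot\|_1$ by \eqref{eq: normequivalence}, the quantity $g^* := \sup_{\|y\|_1 = 1} g(y)$ is finite, and
\[
\lim_{M \to \infty} \sup_{\|x\|_1 \geq M} \frac{\tau(0,x)}{\|x\|_1} \leq g^* \qquad \passage\text{-a.s.}
\]
It therefore suffices to exhibit a deterministic $c^+$ with $g^* < c^+ < \lambda_0^+$.

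To bound $g^*$ itself, I would use the lattice symmetries of $\passage$ (guaranteed by A1 or A2(b)), which force $g(\unitv_1) = g(\unitv_2)$. The triangle inequality for the norm $g$ then gives $g(x_1 \unitv_1 + x_2 \unitv_2) \leq (|x_1| + |x_2|) g(\unitv_1)$ for every $x$, so $g^* \leq g(\unitv_1)$.

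Finally I would establish the key strict inequality $g(\unitv_1) < \lambda_0^+$. When $\lambda_0^+ = \infty$ this is automatic since $g(\unitv_1)$ is finite. In the nontrivial case $\lambda_0^+ < \infty$, the weights are $\passage$-a.s.\ bounded by $\lambda_0^+$, so $\mathbb{E}\omega_e \leq \lambda_0^+ < \infty$; by subadditivity and the one-edge bound $\tau(0,\unitv_1) \leq \omega_{\{0,\unitv_1\}}$, one gets $g(\unitv_1) \leq \mathbb{E}\tau(0,\unitv_1) \leq \mathbb{E}\omega_e$. The main subtle step is ruling out the degenerate case $\omega_e \equiv \lambda_0^+$, which would collapse the chain of inequalities to equalities. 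Under A1$'$ the common distribution is continuous, hence not a point mass; under A2$'$ an a.s.\ constant weight would force every pair of distinct same-length paths to have identical passage time, contradicting unique passage times. Either way $\passage(\omega_e < \lambda_0^+) > 0$, so $\mathbb{E}\omega_e < \lambda_0^+$ and hence $g(\unitv_1) < \lambda_0^+$. Any $c^+ \in (g(\unitv_1),\lambda_0^+)$ then satisfies the conclusion of the lemma.
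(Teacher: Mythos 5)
Your proof is correct and follows essentially the same route as the paper's: both reduce via the shape theorem to showing $g(x) \leq \|x\|_1\, \mathbb{E}\omega_e < \|x\|_1\, \lambda_0^+$, the only cosmetic difference being that you bound $g(x)$ by the triangle inequality and symmetry of $g$ while the paper uses a deterministic $\|x\|_1$-step path. You additionally spell out why $\mathbb{E}\omega_e < \lambda_0^+$ is strict and treat the $\lambda_0^+ = \infty$ case separately, details the paper leaves implicit in the line ``Because either {\bf A1'} or {\bf A2'} hold, $\mathbb{E}(\tau_e) < \lambda_0^+$.''
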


\begin{proof}
Because either {\bf A1'} or {\bf A2'} hold, $\mathbb{E}(\tau_e) < \lambda_0^+$. For any $z \in \mathbb{Z}^2$, choose a deterministic path $\gamma_z$ with number of edges equal to $\|z\|_1$. For $x \in \mathbb{Q}^2$ and $n \geq 1$ with $nx \in \mathbb{Z}^2$,
\[
\mathbb{E} \tau(0,nx) \leq \mathbb{E} \tau(\gamma_{nx}) = n \|x\|_1 \mathbb{E}\tau_e\ , \text{ so } g(x) \leq \|x\|_1 \mathbb{E} \tau_e\ .
\] 
This extends to all $x \in \mathbb{R}^2$ by continuity, so the shape theorem gives the result.
\end{proof}

We need a lemma to pull events back from $\widetilde \Omega$ to $\Omega_1$. Fix an increasing sequence $(n_k)$ such that $\mu^*_{n_k} \to \mu$ weakly.
\begin{lem}
\label{toalphas}
Let $E \subseteq \widetilde \Omega$ be open with $\mu(E) > \beta$. There exists $C_\beta>0$ and $K_0$ such that for $k\geq K_0$, the Lebesgue measure of the set $\{ \alpha \in [0,n_k]: \, \mu_\alpha (E)  > \beta/2 \}$ is at least $C_\beta\, n_k$.
\end{lem}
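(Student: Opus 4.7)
My plan is to combine two elementary ingredients: the lower-semicontinuity half of the Portmanteau theorem (which applies because $E$ is open), and a Markov-type bound applied to the integral defining $\mu^*_{n_k}$.

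First I would observe that since $E$ is open and $\mu^*_{n_k} \to \mu$ weakly, property \eqref{eq: kallenbergopen} gives
\[
\liminf_{k \to \infty} \mu^*_{n_k}(E) \geq \mu(E) > \beta.
\]
Hence there exists $K_0$ such that $\mu^*_{n_k}(E) > \beta$ for every $k \geq K_0$. This step is where the hypothesis that $E$ is open (as opposed to merely measurable) gets used; without it, the inequality could go the wrong way and there would be no control over the Lebesgue integral of $\alpha \mapsto \mu_\alpha(E)$.

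Next I would unpack the definition of $\mu^*_{n_k}$ from \eqref{eq: munstar}. Writing $A_k = \{\alpha \in [0,n_k] : \mu_\alpha(E) > \beta/2\}$ and splitting the integral, the trivial bounds $\mu_\alpha(E) \leq 1$ on $A_k$ and $\mu_\alpha(E) \leq \beta/2$ on $[0,n_k]\setminus A_k$ give, for $k \geq K_0$,
\[
\beta\, n_k < \int_0^{n_k} \mu_\alpha(E)\, \md\alpha \leq |A_k| + \frac{\beta}{2}\bigl(n_k - |A_k|\bigr) \leq |A_k| + \frac{\beta}{2}\, n_k,
\]
so that $|A_k| > (\beta/2)\, n_k$. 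Taking $C_\beta = \beta/2$ completes the proof.

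There is no serious obstacle here; the only point that requires the hypothesis in an essential way is the openness of $E$, which converts weak convergence of $\mu^*_{n_k}$ into a lower bound on $\mu^*_{n_k}(E)$ for large $k$. The rest is a standard ``reverse Markov'' estimate, and the measurability of $\alpha \mapsto \mu_\alpha(E)$ needed to make sense of the integral is exactly the fact deferred to Appendix~\ref{sec: appendix} in the construction of $\mu_n^*$.
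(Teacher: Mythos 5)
Your proof is correct and follows essentially the same route as the paper: Portmanteau for open sets gives $\mu^*_{n_k}(E)>\beta$ for large $k$, and then a reverse-Markov estimate on the defining integral of $\mu^*_{n_k}$. The only (immaterial) difference is that you drop the $-(\beta/2)|A_k|$ term and obtain the slightly weaker constant $C_\beta=\beta/2$ instead of the paper's $\beta/(2-\beta)$; both are positive, so both suffice.
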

\begin{proof}
Call the Lebesgue measure of the above set $\lambda.$ Since $E$ is open, \eqref{eq: kallenbergopen} allows us to pick $K_0$ such that if $k \geq K_0$ then $\mu^*_{n_k}(E) > \beta$. For such $k$, we can write
\[
\frac{1}{n_k} \left( \lambda + (n_k - \lambda) \beta/2\right) \geq \mu_{n_k}^*(E) > \beta,~ \text{giving } \lambda > \frac{n_k \beta}{2 (1 - \beta/2)}\ .
\]
Setting $C_\beta := \beta (2 - \beta)^{-1}$ completes the proof.
\end{proof}

The last lemma is based on \cite[Lemma~3.4]{AD} and will be used in the edge-modification argument. To push the upward finite energy property forward from $\Omega_1$ to $\widetilde \Omega$ we need concrete lower bounds for probabilities of modified events. We write a typical element of $\Omega_1$ as $\omega = (\omega_e, \check{\omega}),$ where $\check{\omega} = (\omega_f)_{f \neq e}.$ We say an event $A \subseteq \Omega_1$ is {\it $e$-increasing} if, for all $(\omega_e,\check{\omega}) = \omega \in A$ and $r > 0,$ $(\omega_e + r, \check{\omega}) \in A.$

\begin{lem}
\label{lem: edge_modification}
Let $\lambda > 0$ be such that $\mathbb{P}\left(\omega_e \geq \lambda \right) > 0.$ For each $\vartheta>0$ there exists $C = C(\vartheta,\lambda) > 0$ such that for all edges $e$ and all $e$-increasing events $A$ with $\mathbb{P}(A) \geq \vartheta$,
\[
\mathbb{P}\left( A, ~\omega_e \geq \lambda\right) \geq C ~\mathbb{P}\left(A\right)\ .
\]
\end{lem}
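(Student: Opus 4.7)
The plan is to condition on the off-$e$ configuration $\check\omega = (\omega_f)_{f\neq e}$ and exploit that $A$, being $e$-increasing, has the conditional form $\{\omega_e \geq a(\check\omega)\}$ for a measurable threshold $a : \Omega_1 \to [-\infty, \infty]$. Writing $p(\check\omega) = \mathbb{P}(A \mid \check\omega)$ and $q(\check\omega) = \mathbb{P}(\omega_e \geq \lambda \mid \check\omega)$, both $A$ and $\{\omega_e \geq \lambda\}$ are upper level sets of $\omega_e$ conditional on $\check\omega$, so the intersection is again an upper level set at the larger threshold, giving the key identity
\[
\mathbb{P}(A,\,\omega_e \geq \lambda \mid \check\omega) \;=\; \mathbb{P}(\omega_e \geq \max(a(\check\omega),\lambda) \mid \check\omega) \;=\; \min(p(\check\omega),\,q(\check\omega)).
\]

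Next I would invoke the upward finite energy property to conclude $q > 0$ $\mathbb{P}$-almost surely, and then choose $\epsilon = \epsilon(\vartheta,\lambda) \in (0,1]$ small enough that $\mathbb{P}(q < \epsilon) \leq \vartheta/2$. The crucial point here is that under {\bf A1} the weights are i.i.d.\ and under {\bf A2} the measure $\mathbb{P}$ has all the symmetries of $\mathbb{Z}^2$, so the law of $q$ is the same for every edge $e$ and the threshold $\epsilon$ may be chosen uniformly in $e$.

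Finally, a case check yields the elementary bound $\min(p,q) \geq \epsilon\, p\, \mathbf{1}_{\{q \geq \epsilon\}}$ (valid since $p, \epsilon \leq 1$), and integrating over $\check\omega$ gives
\[
\mathbb{P}(A,\,\omega_e \geq \lambda) = \mathbb{E}[\min(p,q)] \geq \epsilon\, \mathbb{E}[p \mathbf{1}_{\{q \geq \epsilon\}}] \geq \epsilon\bigl(\mathbb{P}(A) - \mathbb{P}(q<\epsilon)\bigr) \geq \epsilon\bigl(\mathbb{P}(A) - \vartheta/2\bigr) \geq \tfrac{\epsilon}{2}\,\mathbb{P}(A),
\]
where the last inequality uses $\mathbb{P}(A) \geq \vartheta$. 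One then sets $C = \epsilon/2$.

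The main obstacle is obtaining the uniformity of $\epsilon$ over all edges $e$; this is exactly where the lattice symmetry of $\mathbb{P}$ enters. In the i.i.d.\ setting {\bf A1'} the argument trivializes: $q$ is just the deterministic constant $\mathbb{P}(\omega_e \geq \lambda) > 0$, so $\min(p,q) \geq \mathbb{P}(\omega_e \geq \lambda)\cdot p$ pointwise and one may take $C = \mathbb{P}(\omega_e \geq \lambda)$ independent of $\vartheta$.
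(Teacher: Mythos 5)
Your proof is correct and takes a genuinely different, cleaner route than the paper's. The paper first splits into two cases according to whether $\mathbb{P}(A, \omega_e<\lambda) \leq \tfrac12\mathbb{P}(A)$; in the nontrivial case it introduces an auxiliary variable $\omega_e'$ that is conditionally i.i.d.\ with $\omega_e$ given $\check\omega$, uses the $e$-increasing property to perform a ``swap'' of $\omega_e$ and $\omega_e'$ inside an indicator, and then exploits conditional independence to peel off the factor $\mathbb{E}(\mathbf{1}_{\omega_e\geq\lambda}\mid\check\omega)$. You instead observe directly that conditional on $\check\omega$, both $A$ and $\{\omega_e\geq\lambda\}$ are up-sets in $\omega_e$, hence comparable, so $\mathbb{P}(A,\omega_e\geq\lambda\mid\check\omega)=\min(p,q)$ exactly; no auxiliary variable, no case split. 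Both arguments ultimately rest on the same two ingredients --- conditioning on $\check\omega$ and the upward finite energy property to ensure $q>0$ a.s.\ --- and both need the law of $q$ to be $e$-independent to get a constant uniform in $e$. You make this explicit via the lattice symmetry hypothesis; the paper's choice of $c$ implicitly carries the same dependence, so you are not missing anything there. What the direct up-set argument buys is transparency: the identity $\mathbb{P}(A,\omega_e\geq\lambda\mid\check\omega)=\min(p,q)$ makes the mechanism visible, and your case analysis $\min(p,q)\geq\epsilon p\mathbf{1}_{\{q\geq\epsilon\}}$ is elementary. The two give comparable constants ($\epsilon/2$ versus $\min\{1/2, c/(2+c)\}$). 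Your closing observation that in the i.i.d.\ case $q$ is deterministic and the lemma holds with $C=\mathbb{P}(\omega_e\geq\lambda)$, independently of $\vartheta$, is also correct and worth noting.
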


\begin{proof}
If $\mathbb{P}(A,~ \omega_e < \lambda) \leq (1/2) \mathbb{P}(A)$ then 
\begin{equation}\label{eq: on_skype_again}
\mathbb{P}(A,~\omega_e \geq \lambda) \geq (1/2) \mathbb{P}(A)\ .
\end{equation}
Otherwise, we assume that
\begin{equation}\label{eq: newassumption}
\mathbb{P}(A,~ \omega_e < \lambda) \geq (1/2) \mathbb{P}(A)\ .
\end{equation}

We then need to define an extra random variable. Let $\omega_e'$ be a variable such that, given $\check{\omega}$ from $\omega \in \Omega_1$, it is an independent copy of the variable $\omega_e$. In other words, letting $\mathbb{Q}$ be the joint distribution of $(\omega, \omega_e')$ on the space $\Omega_1 \times \mathbb{R}$, for $\mathbb{Q}$-almost every $\check{\omega}$,
\begin{itemize}
\item $\omega_e'$ and $\omega_e$ are conditionally independent given $\check{\omega}$ and
\item the distributions $\mathbb{Q}(\omega_e \in \cdot \mid \check{\omega})$ and $\mathbb{Q}(\omega_e' \in \cdot \mid \check{\omega})$ are equal.
\end{itemize}
(This can be defined, for instance, by setting $\mathbb{Q}(A \times B) = \int_A \mathbb{P}(\omega_e \in B \mid \check{\omega}) ~\md \mathbb{P}(\omega)$ for Borel sets $A \subseteq \Omega_1$ and $B \subseteq \mathbb{R}$.)

We now write $\mathbb{P}(A, \omega_e \geq \lambda)$ as
\begin{align}
\mathbb{Q} [ (\omega_e, \check{\omega})\in A,\, \omega_e \in [\lambda,\infty)] & \geq \mathbb{Q}\left[(\omega_e,\check{\omega}) \in A,\, \omega_e \in [\lambda,\infty),\,\omega_e' \in [0,\lambda)\right]\nonumber\\
&=\mathbb{E}_{\mathbb{Q}} \left[ \mathbf{1}_{(\omega_e,\check{\omega}) \in A}\, \mathbf{1}_{ \omega_e \in [\lambda,\infty)}\, \mathbf{1}_{\omega_e' \in [0,\lambda)}\right]\nonumber\\
&\geq\mathbb{E}_{\mathbb{Q}} \left[ \mathbf{1}_{(\omega_e',\check{\omega}) \in A}\, \mathbf{1}_{ \omega_e \in [\lambda,\infty)}\, \mathbf{1}_{\omega_e' \in [0,\lambda)}\right]\label{switcheroo}\\
&=\mathbb{E}_{\mathbb{Q}} \left[ \mathbf{1}_{(\omega_e',\check{\omega})\in A}\, \mathbf{1}_{\omega_e'\in [0,\lambda)}\,  \mathbb{E}_{\mathbb{Q}}\left(\mathbf{1}_{\omega_e \in [\lambda,\infty)}\, \mid \check{\omega},\omega_e' \right)\right]. \label{condintready}
\end{align}
In (\ref{switcheroo}), we have used that $A$ is $e$-increasing. Using conditional independence in (\ref{condintready}),
\begin{equation}\label{eq: something_something}
\passage(A, ~\omega_e \geq \lambda) \geq \mathbb{E}_{\mathbb{Q}} \left[ \mathbf{1}_{(\omega_e',\check{\omega})\in A}\, \mathbf{1}_{\omega_e' \in [0,\lambda)}\,  \mathbb{E}_{\mathbb{Q}}\left(\mathbf{1}_{\omega_e \in [\lambda,\infty)}\, \mid \check{\omega}\right)\right]\ .
\end{equation}
By the upward finite energy property, 
\[
\mathbb{E}_{\mathbb{Q}}(\mathbf{1}_{\omega_e \in [\lambda,\infty)} \mid \check{\omega}) = \mathbb{E}(1_{\omega_e \in [\lambda,\infty)}\mid \check{\omega}) > 0 ~~\mathbb{Q} \text{-almost surely}\ ,
\]
so choose $c>0$ such that 
\[
\mathbb{Q}\left[ \mathbb{E}_{\mathbb{Q}}( \mathbf{1}_{\omega_e \in [\lambda,\infty)} \mid \check{\omega}) \geq c \right] \geq 1-(\vartheta/4)\ .
\]
Note that this choice of $c$ depends only on $\lambda$ and $\vartheta$. By \eqref{eq: newassumption} and the assumption $\mathbb{P}(A) \geq \vartheta$, the right side is at least $1-(1/2)\mathbb{P}(A,~\omega_e< \lambda)$, implying
\[
\mathbb{Q}\left[ (\omega_e',\check{\omega}) \in A, ~\omega_e'\in [0,\lambda),~ \mathbb{E}_{\mathbb{Q}}(\mathbf{1}_{\omega_e \in [\lambda,\infty)} \mid \check{\omega}) \geq c\right] \geq (1/2) \mathbb{P}(A,~\omega_e < \lambda)\ .
\]
Combining with \eqref{eq: something_something}, we find $\mathbb{P}(A,~\omega_e\geq \lambda) \geq (c/2)\mathbb{P}(A,~\omega_e < \lambda)$. We finish the proof by writing
\[
\passage(A) = \passage(A, \omega_e < \lambda) + \passage(A, \omega_e \geq \lambda)\\
\leq \left[\frac{2}{c}+1 \right] \passage(A,\omega_e \geq \lambda)\ .
\]
Observing this inequality and \eqref{eq: on_skype_again}, we set $C = \min\{1/2, c/(2+c))\}$.

\end{proof}

\subsubsection{Defining $B'$}

We begin with the definition of the ``barrier event'' $B'$. For an integer $R > N,$ let 
\[
S(R,N) = \{y \in \mathbb{Z}^2 : 0 \leq y \cdot \varpi' \leq N,~ |y \cdot \varsigma| \leq R\}\ .
\] 
For any vertex $x^* \in S(R,N) \cap L_N'$, define $B'(x^*;R,N)$ by the condition
\begin{equation}\label{eq: B_prime_def}
\text{for all } z \in \mathbb{Z}^2 \setminus S(R,N) \text{ with } z \preceq L_N',~ \Gamma_z \cap \Gamma_{x^*} = \varnothing\ .
\end{equation}

\begin{prop}
\label{B_prime}
There exist values of $R,N$ and $x^*$ such that $\mu(B'(x^*;R,N)) > 0.$
\end{prop}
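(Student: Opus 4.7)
The plan is to combine the geometric setup from $B_G(0,x_1,x_2)$ with a high-weight barrier constructed by edge modification, transferring the resulting positive probability from $\mathbb{P}$ back to $\mu$ via $\mu_\alpha$. First, starting from $B_G(0,x_1,x_2) \cap B(0,x_1,x_2)$ (of positive $\mu$-measure by Lemma~\ref{path_concentration}), I would extract deterministic $R$, $N$ and $x^* \in S(R,N) \cap L'_N$ such that with positive $\mu$-probability the first $L'_N$-crossings $p_0, x^*, p_2$ of $\Gamma_0, \Gamma_{x_1}, \Gamma_{x_2}$ all lie inside $S(R,N)$ and have strictly ordered $\varsigma$-coordinates $p_0 \cdot \varsigma < x^* \cdot \varsigma < p_2 \cdot \varsigma$. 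Existence of such parameters follows from angular concentration together with the fact that, since $\Gamma_{x_1}$ is disjoint from the others and cannot cross them without coalescing, its $L'_N$-crossing must lie strictly between theirs; a countable-union argument then fixes $R, N, x^*$ deterministically.

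Second, the geometric point is that on this event the three disjoint paths partition the far side of $L'_N$ into regions, with $\Gamma_{x^*}$ in the corridor bounded by $\Gamma_0$ and $\Gamma_{x_2}$. Out-degree one in $\mathbb{G}$ (Proposition~\ref{prop: secondGG2}) forces that if $\Gamma_z$ meets $\Gamma_0$ or $\Gamma_{x_2}$ it immediately coalesces with it and can no longer meet $\Gamma_{x^*}$; so $\Gamma_z$ can reach $\Gamma_{x^*}$ only by crossing the $L'_N$-segment from $p_0$ to $p_2$. It therefore suffices to prevent any $\Gamma_z$ with $z \notin S(R,N)$ and $z \preceq L'_N$ from crossing that segment, which I would enforce by installing a high-weight ``U-barrier'' inside $S(R,N)$ along the three near-side sides $y \cdot \varpi' = 0$ and $|y \cdot \varsigma| = R$, forcing each barrier edge to have weight in $[\lambda, \lambda_0^+)$ for some $\lambda$ close to $\lambda_0^+$. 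By Lemma~\ref{l1_shape}, geodesic costs grow at most like $c^+ \|\cdot\|_1$ with $c^+ < \lambda_0^+$, so choosing the barrier thick enough (independently of $z$) makes any crossing strictly more expensive than a detour around $S(R,N)$; hence every $\Gamma_z$ with $z$ as above crosses $L'_N$ only at $\varsigma$-coordinates outside $[-R, R]$ and so never enters the corridor.

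Finally, the barrier event is $e$-increasing in each of its edges, so a repeated application of Lemma~\ref{lem: edge_modification} yields a uniform $c > 0$ with $\mathbb{P}(E_0 \cap \text{barrier}) \geq c$, where $E_0$ is a cylinder inner approximation of the geometric event $E$ constructed in the first step. Using Lemma~\ref{toalphas} to pull $E_0$ back to a positive-density set of $\alpha \in [0, n_k]$ with $\mu_\alpha(E_0) > \mu(E_0)/2$, pushing forward through $\Phi_\alpha$, and averaging over $\alpha$ gives $\mu_{n_k}^*(B'(x^*;R,N)) \geq c' > 0$ uniformly in large $k$; Portmanteau applied to a closed inner approximation of $B'(x^*;R,N)$ then yields $\mu(B'(x^*;R,N)) > 0$. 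The main obstacle will be this final transfer, in particular verifying that the modified event at finite $\alpha$---which directly controls only geodesics to $L_\alpha$ in $\mathbb{G}_\alpha$---still implies the $B'$ event for the limit graph $\mathbb{G}$; here one needs Theorem~\ref{thm: nachostheorem} to ensure the asymptotic directions of the limit paths $\Gamma_z$ are consistent with the barrier-avoidance conclusion.
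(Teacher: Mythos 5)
Your overall strategy---establish a positive-probability geometric event, install a high-weight barrier by edge modification, and transfer back to $\mu$ via Lemma~\ref{toalphas}---matches the paper's, but the barrier placement you propose creates a gap in the $e$-increasing step that the paper's geometry is specifically designed to avoid. You propose placing the barrier on the three near sides of $\partial S(R,N)$, including the face $y \cdot \varpi' = 0$ (i.e.\ along $L'_0$). But the geodesics $\Gamma_0$, $\Gamma_{x_1}$, $\Gamma_{x_2}$ start at $0, x_1, x_2 \in L'_0$ and immediately leave into the far side, so their first edges lie precisely in (or adjacent to) your proposed barrier. Increasing the weight of such an edge can reroute these geodesics, so the event ``geometric configuration $E_0$ holds'' is \emph{not} $e$-increasing in barrier-edge weights as you claim; you would be modifying the very paths whose structure the event records. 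The paper sidesteps this by choosing the barrier edge set $U'_{\mathcal{E}}$ to live strictly between $s_0^\alpha(M)$ and $s_{x_2}^\alpha(M)$ in the region $U$ bounded by those geodesics, $L'_0$, and $L'_N$---and explicitly excluding edges on $s_0^\alpha(M)$, $s_{x_2}^\alpha(M)$, $L'_0$, $L'_N$---so that modification leaves the three constrained geodesics untouched. An auxiliary event $H_M$ (no geodesic from $S(R,N)$ re-enters $S(R,N)$ after exiting $[-M,M]^2$) is also needed to rule out the modified edges lying on the far tails of $s_0^\alpha$, $s_{x^*}^\alpha$, $s_{x_2}^\alpha$; your sketch has no analogue of this.

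A second, smaller issue: the passage-time comparison that makes the barrier effective must be anchored to a concrete cheap alternative path. In the paper this is the path running along $L'_0$ to $0$, then up $r_0^\alpha$ to $L'_N$, then along $L'_N$---all edges deliberately left out of the barrier---whose cost is controlled via conditions 3--4 of $B^\circ$ together with \eqref{eq: epsilondef} and \eqref{eq: technical_condition}. Your phrase ``choosing the barrier thick enough makes any crossing strictly more expensive than a detour'' leaves unspecified both the comparison path and how its cost is uniformly bounded once the edge-modification has changed the landscape; without fixing the barrier away from all the paths you care about, you cannot guarantee such a cheap detour survives the modification. These are not cosmetic points: making them precise essentially forces the paper's choice of $U$, $U'_{\mathcal{E}}$, and the cylinder/regions $\mathcal{R}_1^\alpha(M), \mathcal{R}_2^\alpha(M)$.
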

Our strategy is to pull back cylinder approximations of $B(0,x_1,x_2)$ to $\Omega_1$ to find events that depend on $\mathbb{G}$ in the vicinity of $0, x_1$ and $x_2.$ We will find a subevent which is monotone increasing in the weights of edges lying in $S(R,N)$ between the pulled-back versions of $\Gamma_0$ and $\Gamma_{x_2}.$ When we look at the subevent on which all of these weights are large (``edge modification"), the pullback of $\Gamma_{x_1}$ will be unchanged (past $S(R,N)$), and no pullback of any $\Gamma_z$ can intersect it if $z \preceq L'_N$ and $z \notin S(R,N).$ We will then choose $x^*$ to be a certain point on $\Gamma_{x_1} \cap L'_N$. The constants $N$ and $R$ will be chosen to guarantee that the pullback of $\Gamma_{x_1}$ is so isolated. Pushing forward the subevent to $\widetilde{\Omega}$ will complete the proof.

\begin{proof}
We will first fix some parameters to prepare for the main argument. Recall the definition of $c^+$ from Lemma~\ref{l1_shape} and let
\[ 
\lambda^+ := \min\{\lambda_0^+, \, 2 c^+\}\ ,
\]
and put $\delta^+ := \lambda^+ - c^+>0$ (giving $\lambda^+ = 2c^+$ when $\lambda_0^+=\infty$). Choose once and for all some 
\begin{equation}\label{eq: epsilondef}
\varepsilon < \frac{\delta^+}{16\lambda^+},
\end{equation}
such that also
\begin{equation}\label{eq: pizzapie34}
\limsup_{\|x\|_1 \rightarrow \infty}\,\, \sup_{y: \, \|y - x\|_1 \leq \varepsilon \|x\|_1} \frac{\tau(0,y)}{\|x\|_1} < \lambda^+ - \frac{7 \delta^+}{8}\quad \mu\text{-a.s.}
\end{equation}
This follows from Lemma~\ref{l1_shape} because if $\|y\|_1$ is large, $\|y-x\|_1 \leq \e \|x\|_1$ gives $\tau(0,y)/\|x\|_1 \leq (\tau(0,y)/\|y\|_1)(1+\e) < c^+(1+\e)$. Fix $\beta > 0$ with $\mu(B(0,x_1,x_2)) > \beta.$  

The majority of the proof will consist of defining a few events in sequence, the second of which we will pull back to the space $\Omega_1$ to do the edge modification. We will need to choose further parameters to ensure that each of these events has positive probability. For an arbitrary outcome in $\widetilde{\Omega}$ and $N\geq 0$, denote by $r_0(N)$ and $r_2(N)$ the segments of $\Gamma_0$ and $\Gamma_{x_2}$ up to their first intersections with $L'_N$ (if they exist) and let $w_N$ denote the midpoint of the segment of $L'_N$ lying between these first intersections. The first event $B^\circ(R,N,\e)$ is defined by the conditions (for $R,N \geq 1$)
\begin{enumerate}
\item $\Gamma_0, \Gamma_{x_1}$ and $\Gamma_{x_2}$ never intersect,
\item they stay on the far side of $L'_0$ except for their initial vertices,
\item $\Gamma_0$ and $\Gamma_{x_2}$ intersect $L'_N$ and their first intersection points are within $\ell^1$ distance $\e N$ of each other,
\item for $i=0,2$, $\tau(r_i(N)) < (\lambda^+-7\delta^+/8)\|w_N\|_1$ and
\item $\Gamma_0$ and $\Gamma_{x_2}$ do not touch any $x \preceq L'_N$ with $x \notin S(R,N)$.
\end{enumerate}
See Figure~\ref{fig: b_circ} for a depiction of the event $B^\circ(R,N,\e)$.

\begin{figure}[h]
\caption{The event $B^\circ(R,N,\e).$ The solid dots represent the first intersection points of $\Gamma_0$ and $\Gamma_{x_2}$ with $L'_N$. They are within $\ell^1$ distance $\e N$ of each other.}
\label{fig: b_circ}
\centering
\includegraphics[scale=0.65]{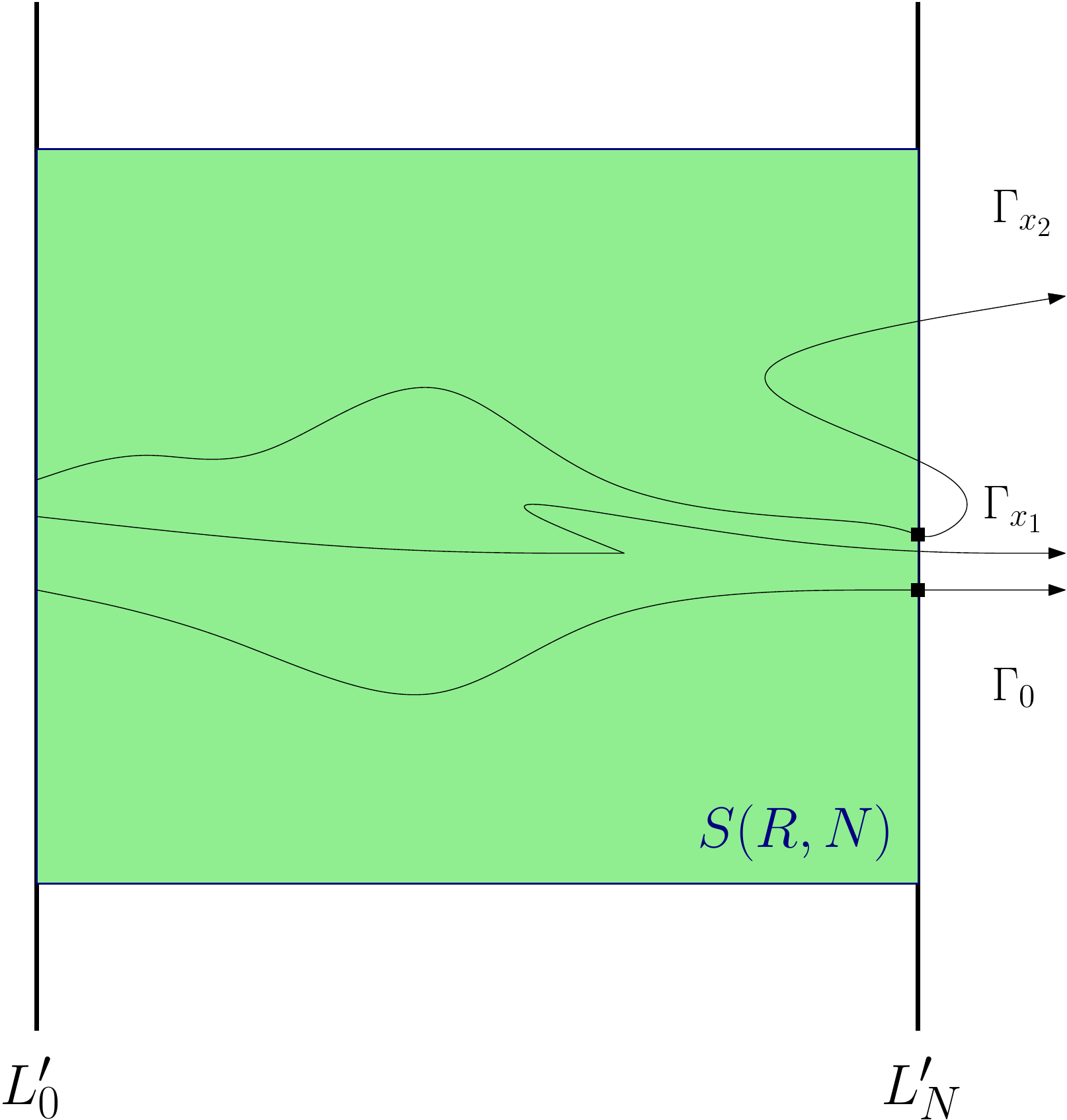}
\end{figure}

We claim that there exists $N_0$ and $R_0$ such that 
\begin{equation}\label{eq: pizzapie44}
\mu(B^\circ (R_0,N_0,\varepsilon)) > 0\ .
\end{equation}
We also need $N_0$ to satisfy a technical requirement. It will be used at the end of the proof:
\begin{equation}\label{eq: technical_condition}
\|x_2\|_1 \leq \e N_0\ .
\end{equation}

To pick $N_0$, first choose $N_1>0$ so large that if $N \geq N_1$ then
\begin{equation}\label{eq: pizzapie45}
\passage\left( \forall z,z' \text{ with } \|z\|_1 \geq N, \text{ and } \frac{\|z-z'\|_1}{\|z\|_1} \leq \e,~ \frac{\tau(0,z')}{\|z\|_1} < (\lambda^+ - \frac{7 \delta^+}{8}) \right) > 1 - \beta / 4\ ,
\end{equation}
and $\|x_2\|_1 \leq \e N$. This is possible by \eqref{eq: pizzapie34}. Write $E_0(N)$ for the event in \eqref{eq: pizzapie45} and $E_{x_2}(N)$ for $E_0(N)$ translated so that $0$ is mapped to $x_2$. Then $\mathbb{P}(B(0,x_1,x_2) \cap E_0(N) \cap E_{x_2}(N)) > \beta/2$. By Lemma~\ref{path_concentration}, we can then choose $N_0 \geq N_1$ such that
\begin{equation}\label{eq: pizzapie46}
\mu(B(0,x_1,x_2) \cap E_0(N_0) \cap E_{x_2}(N_0) \cap C(0,x_2;N_0)) > 0\ ,
\end{equation}
where $C(0,x_2;N_0)$ is the event that $\Gamma_0$ and $\Gamma_{x_2}$ intersect $L_{N_0}'$ and their first intersection points are within $\ell^1$ distance $\e N_0$ of each other. On the event in \eqref{eq: pizzapie46}, the endpoints of the $r_i(N_0)$'s are within distance $\e N_0$ of $w_{N_0}$ and since they are on $L'_{N_0}$, their $\ell^1$ distance from $0$ or $x_2$ is at least $N_0$. Therefore $\tau(r_i(N_0)) < (\lambda^+ - 7 \delta^+/8)\|w_{N_0} \|_1$ for $i=0,2$. This shows that the intersection of four of the five events in the definition of $B^\circ(R,N_0,\e)$ occurs with positive probability. For the fifth, recall that on $B(0,x_1,x_2)$, the paths $\Gamma_0$, $\Gamma_{x_1}$ and $\Gamma_{x_2}$ contain only finitely many vertices $z \preceq L'_{N_0}$. Thus we can choose $R_0$ large enough (depending on $N_0$) to satisfy condition 5 and complete the proof of \eqref{eq: pizzapie44}.

Fix these $R=R_0$ and $N=N_0$ from now on. The next event we define is a cylinder approximation of the first event. It will be needed to pull back to $\Omega_1$. For $M>0$ and $x \in \Z2,$ let $\Gamma^M_x$ be the finite path formed by starting at $x$ and then passing along out-edges of $\dirgraph$ until we first reach a vertex of $\mathbb{R}^2 \setminus (-M,M)^2$. (Note that by this definition, $\Gamma_x^M = \{x\}$ whenever $x \notin (-M,M)^2$.) We define $B^\circ_M(R,N,\varepsilon)$ with the same conditions as $B^\circ(R,N,\varepsilon),$
except replacing the paths $\Gamma_{(\cdot)}$ by the segments $\Gamma_{(\cdot)}^M$. In addition, however, we impose the restriction that, writing
\[
\partial M = [-M,M]^2 \setminus (-M,M)^2\ ,
\]
we have
\begin{equation}\label{eq: neweq}
\Gamma_y^M \cap \partial M \subseteq \{ z \in \mathbb{R}^2 : z \succ L'_N\},~ y=0,x_2\ .
\end{equation}
Of course, if $\Gamma_0^M$ (etc.) does not intersect $L'_N,$ then $B^\circ_M$ does not occur.  Then $B^\circ_M(R,N,\varepsilon)$ is open for all $M$ and we claim that
\begin{equation}\label{eq: cylinderapprox}
B^\circ (R,N,\varepsilon) = \cup_{M_0 = 1}^{\infty} \cap_{M=M_0}^{\infty} B^\circ_M(R,N,\varepsilon)\ .
\end{equation}
Assuming we show this, then there exists some $M_0$ such that $\mu(\cap_{M=M_0}^{\infty} B^\circ_M(R,N,\varepsilon)) > 0$ and so there is some $\beta'$ with
\begin{equation}\label{eq: aftercylinder}
\mu(B^\circ_M(R,N,\varepsilon)) > \beta' \text{ for all }M \geq M_0\ .
\end{equation}

To prove \eqref{eq: cylinderapprox}, note that the right side is the event that $B^\circ_M(R,N,\e)$ occurs for all $M$ bigger than some random $M_0$. Suppose that an outcome is in the left side. Then the paths $\Gamma_0$, $\Gamma_{x_1}$ and $\Gamma_{x_2}$ are disjoint and remain on the far side of $L_0'$ (except for their first vertices), so the same is true for each $\Gamma_{(\cdot)}^M$ for all $M \geq 1$. Also $\Gamma_0^M$ and $\Gamma_{x_2}^M$ do not touch any $x \preceq L_N'$ with $x \notin S(R,N)$ for all $M \geq 1$. Because $\Gamma_0$ and $\Gamma_{x_2}$ intersect $L_N'$, so do $\Gamma_0^M$ and $\Gamma_{x_2}^M$ for all $M$ bigger than some random $M_1$. Their first intersection points are the same as those of $\Gamma_0$ and $\Gamma_{x_2}$, so for $M \geq M_1$, their first intersection points with $L'_N$ are within $\ell^1$ distance $\e N$ of each other. Further, the passage times of the segments up to $L'_N$ are strictly bounded above by $(\lambda^+-7\delta^+/8)\|w_N\|_1$. Last, because $\Gamma_0$ and $\Gamma_{x_2}$ do not touch any $x \preceq L'_N$ with $x \notin S(R,N)$, they share only finitely many vertices with $\{z \in \mathbb{Z}^2 : z \preceq L'_N\}$ and so must eventually lie on the far side of $L'_N$. This allows us to further increase $M_1$ to an $M_0$ such that if $M \geq M_0$ then in addition \eqref{eq: neweq} holds.

Suppose conversely that the right side of \eqref{eq: cylinderapprox} occurs. Then for all $M$ bigger than some random $M_0$, the six events comprising $B^\circ_M(R,N,\e)$ occur. In particular, the paths $\Gamma_0$, $\Gamma_{x_1}$ and $\Gamma_{x_2}$ are disjoint and stay on the far side of $L_0'$ except for their first vertices (parts 1 and 2 of $B^\circ(R,N,\e)$). Furthermore $\Gamma_0$ and $\Gamma_{x_2}$ cannot touch any $x \preceq L_N'$ with $x \notin S(R,N)$ (part 5). For $M \geq M_0$, the paths $\Gamma_0^M$ and $\Gamma_{x_2}^M$ intersect $L_N'$, with their first intersection points within distance $\e N$ of each other (with passage time strictly bounded above by $(\lambda^+ - 7\delta^+/8)\|w\|_1$). These are the same first intersection points as $\Gamma_0$ and $\Gamma_{x_2}$, so parts 3 and 4 of $B^\circ(R,N,\e)$ occur.

We now pull the cylinder approximation $B^\circ_M(R,N,\e)$ back to $\Omega_1$ using Lemma~\ref{toalphas}. Because this is an open event and satisfies \eqref{eq: aftercylinder} for $M\geq M_0$, we can find an $M$-dependent number $K_0$ such that if $k \geq K_0$, then there is a set $\Lambda_{M,k}$ of values of $\alpha \in [0,n_k]$ which has Lebesgue measure at least $C_{\beta'} n_k$, on which $\mu_\alpha(B^\circ_M(R,N,\e)) > \beta'/2$. Pull back to $\Omega_1,$ setting $B_M^\alpha:= \Phi_\alpha^{-1}(B^\circ_M(R,N,\e)),$ where $\Phi_\alpha$ was defined in \eqref{eq: phidef}. (Here we have suppressed mention of $R,N,\e$ in the notation, as they are fixed for the remainder of the proof.) Then 
\begin{equation}\label{eq: pizzapie77}
\passage(B_M^\alpha) > \beta'/2 \text{ for all } \alpha \in \Lambda_{M,k} \text{ if } M \geq M_0 \text{ and } k \geq K_0(M)\ .
\end{equation}
We henceforth restrict to values of $M,$ $\alpha$ and $k$ such that (\ref{eq: pizzapie77}) holds. In the end of the proof we will take $k \to \infty$ and then $M \to \infty$. In particular then we will be thinking of 
\[
\alpha \gg M \gg N\ , 
\]
the latter of which is fixed. Some of the remaining definitions will only make sense for such $\alpha$, $M$ and $N$ but this does not affect the argument.

Next we define the third of our four events, now working on $\Omega_1$. Let $s^\alpha_{y}$ be the geodesic from $y \in \Z2$ to $L_\alpha$ (recall this was defined for $\varpi$ and not $\varpi'$), and $s^\alpha_y(M)$ the path $s^\alpha_y$ up to its first intersection with $\mathbb{R}^2 \setminus (-M,M)^2$. If $s_0^\alpha(M)$ and $s_{x_2}^\alpha(M)$ intersect $L_N'$ then write $r_i^\alpha(M),$ $i=0,2$ for the portions up to the first intersection point. As before, let $w^\alpha_N$ be the midpoint of the segment of $L_N'$ between these two intersection points. Let $\mathcal{R}_1^\alpha(M)$ be the closed connected subset (in $\mathbb{R}^2$) of $\{x \in \mathbb{R}^2: x \succeq L'_0\}$ with boundary curves $s_0^\alpha(M)$, $s_{x_2}^\alpha(M)$, $L_0'$ and $\partial M$. Similarly let $\mathcal{R}_2^\alpha(M)$ be the closed connected subset of $\mathcal{R}_1^\alpha(M)$ with the following boundary curves: the portions of $s_0^\alpha(M)$ and $s_{x_2}^\alpha(M)$ after their last intersections with $L_N'$, the segment of $L_N'$ between these intersections and last, $\partial M$. Note that when \eqref{eq: neweq} holds, $\mathcal{R}_2^\alpha(M)$ is contained in $\{z \in \mathbb{R}^2: z \succeq L'_N\}$. See Fig.~\ref{my_first_table} for an illustration of these definitions.

\begin{figure}[h]
\caption{The regions $\mathcal{R}_1^\alpha(M)$ and $\mathcal{R}_2^\alpha(M)$. The left figure shows $\mathcal{R}_1^\alpha(M)$ in green. It has boundary curves $L'_0$, $\partial M$, $s_0^\alpha(M)$ and $s_{x_2}^\alpha(M)$. The right figure shows $\mathcal{R}_2^\alpha(M) \subseteq \mathcal{R}_1^\alpha(M)$ in green. It has boundary curves $L'_N$, $\partial M$, and the pieces of $s_0^\alpha(M)$ and $s_{x_2}^\alpha(M)$ from their last intersections with $L'_N$. Note that $\mathcal{R}_2^\alpha(M)$ is contained in the far side of $L'_N$ by \eqref{eq: neweq}.}
\label{my_first_table}
\centering
\begin{tabular}{cc}
\includegraphics[scale=0.40]{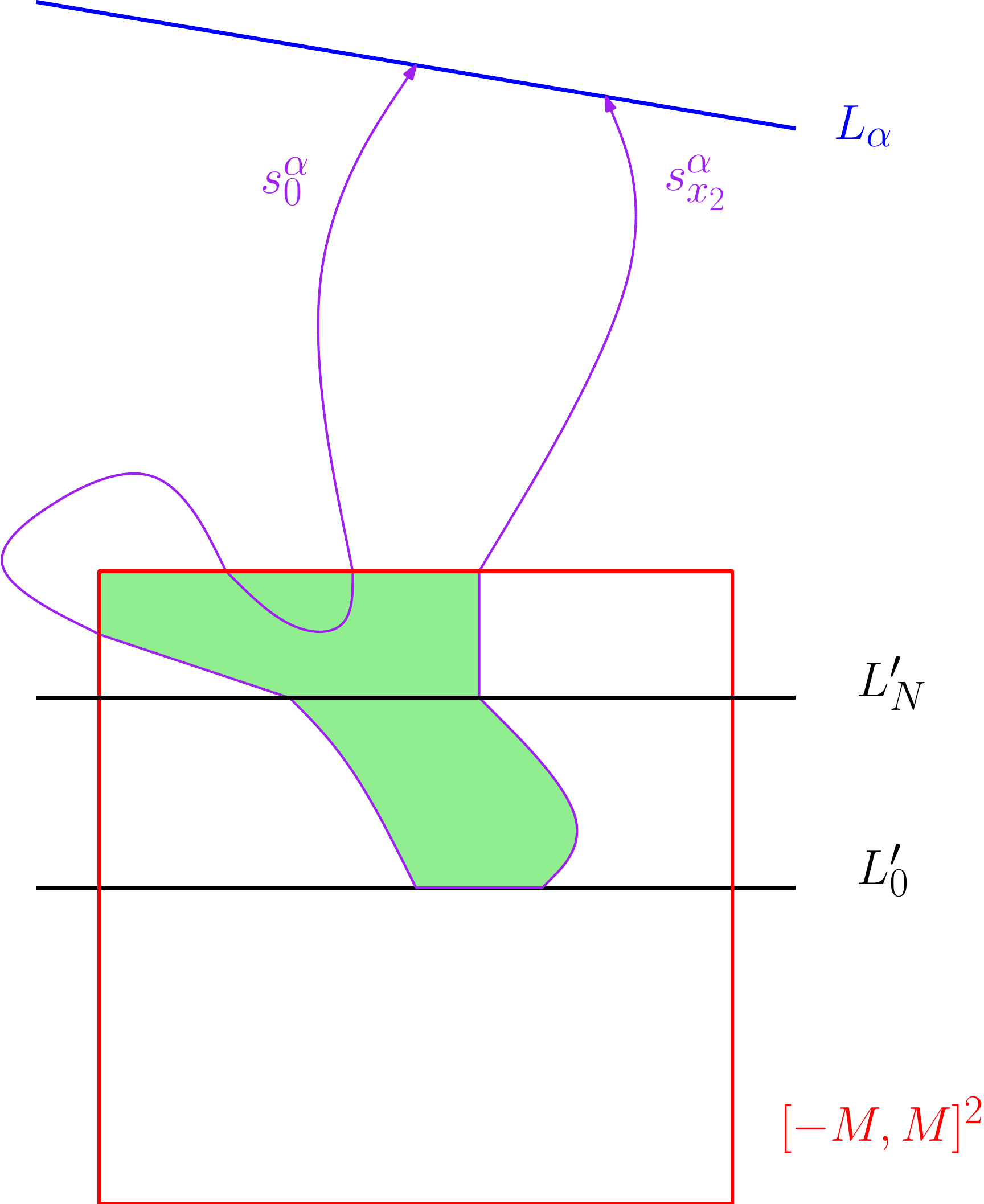}
\includegraphics[scale=0.40]{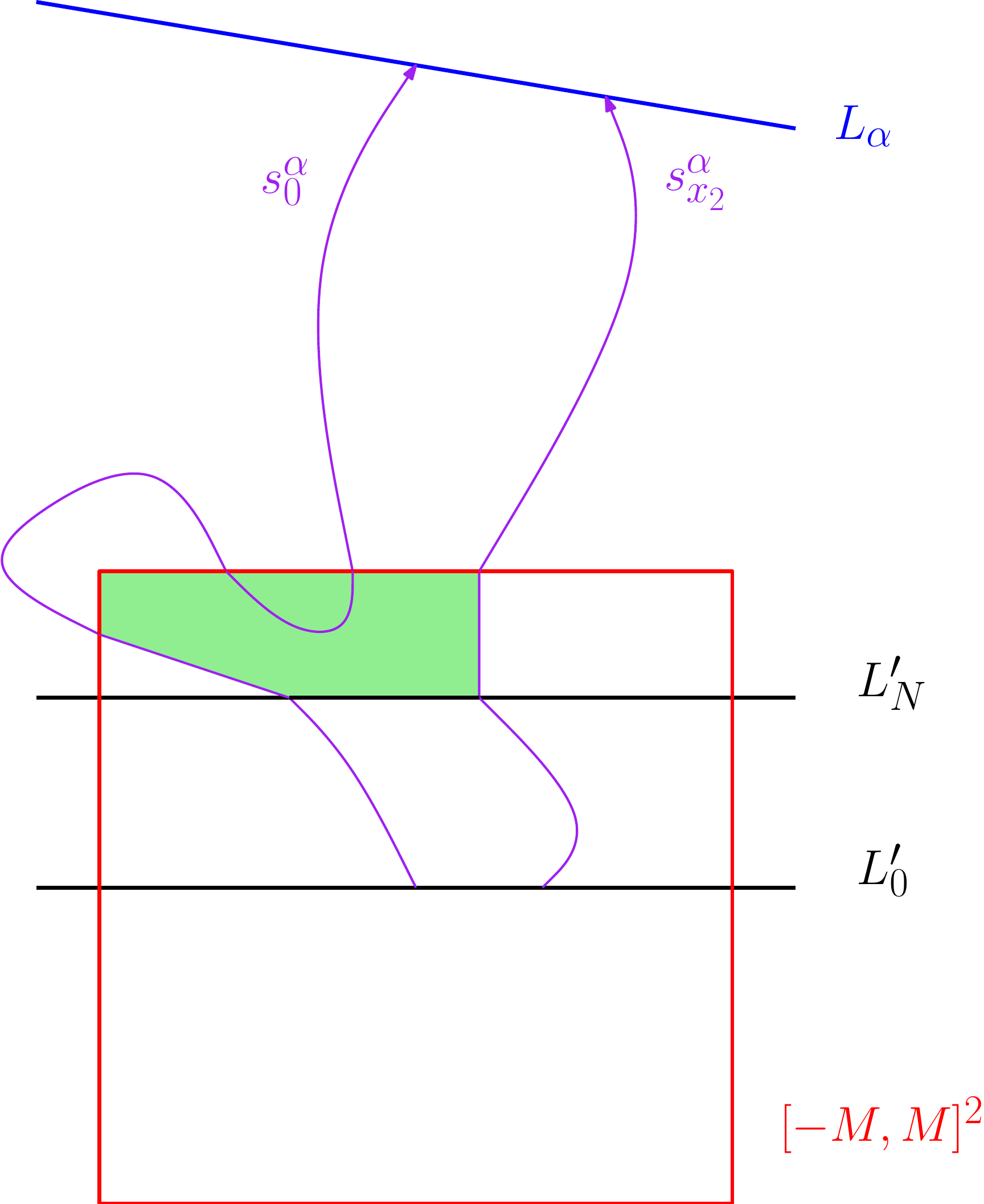}
\end{tabular}
\end{figure}

The event $\hat B_M^\alpha \subseteq \Omega_1$ is then defined by the following conditions:
\begin{itemize}
\item $s_0^\alpha(M)$ and $s_{x_2}^\alpha(M)$ intersect $L'_0$ only once, are disjoint, and do not touch any $y \preceq L'_N$ with $y \notin S(R,N)$.
\item $s_0^\alpha(M)$ and $s_{x_2}^\alpha(M)$ intersect $L'_N$ and their first intersection points are within $\ell^1$ distance $\varepsilon N$ of each other; the paths $r_i^\alpha(M)$ satisfy $\tau(r_i^\alpha(M)) < (\lambda^+ -7 \delta^+/8) \|w^\alpha_N\|_1,$ for $i=0,2$.
\item $s_y^\alpha(M) \cap \partial M \subseteq \{z \in \mathbb{R}^2 : z \succ L'_N\}$ for $y=0,x_2$,
\item there is a vertex $X^* \in L'_N \cap S(R,N)$ such that $s_{X^*}^\alpha(M)$ is disjoint from $s_0^\alpha(M)$ and $s_{x_2}^\alpha(M)$ but is contained in $\mathcal{R}_2^\alpha(M)$, and 
\item the portions of $s_0^\alpha,$ $s_{X^*}^\alpha$ and $s_{x_2}^\alpha$ beyond $[-M,M]^2$ do not contain a vertex of $S(R,N)$;
\end{itemize}

We claim there is an $M_0' \geq M_0$ such that
\begin{equation}\label{eq: pizzapie99}
\mathbb{P}(\hat B_M^\alpha) > \beta'/4 \text{ for all } M \geq M_0'\ .
\end{equation}
Verifying this requires us to define an auxiliary event. Let $H_{M} \subseteq \Omega_1$ denote the event that no geodesic from any point in $S(R,N)$ returns to $S(R,N)$ after its first intersection with $\partial M.$  Then $\passage(H_M) \rightarrow 1$ as $M \rightarrow \infty.$  So for any $M$ larger than some $M_0' \geq M_0$, $\mathbb{P}(H_M) > 1-\beta'/4$, giving
\[ 
\passage(B_M^\alpha \cap H_M) > \beta'/4 \text{ for all } M \geq M_0'\ .
\]
To finish the proof of \eqref{eq: pizzapie99} we show that $B_M^\alpha \cap H_M \subseteq \hat B_M^\alpha$. Note that the first three conditions of $\hat B_M^\alpha$ are immediately implied by $B_M^\alpha$; they are the analogues on $\Omega_1$ of the conditions that make up $B_M^\circ(N,R,\e)$ (each $\Gamma_{(\cdot)}^M$ is replaced by $s_{(\cdot)}^\alpha(M)$). For the fourth condition, note that when $B_M^\alpha$ occurs, $s_0^\alpha(M)$, $s_{x_1}^\alpha(M)$ and $s_{x_2}^\alpha(M)$ stay on the far side of $L_0'$ (aside from their initial vertices) and stop when they touch $\partial M$. Therefore by planarity, $s_{x_1}^\alpha(M)$ is contained in $\mathcal{R}_1^\alpha(M)$. In particular, if we choose $X^*$ to be the last intersection point of $s_{x_1}^\alpha(M)$ with $L'_N$, then $s_{X^*}^\alpha(M)$ is trapped in $\mathcal{R}_2^\alpha(M)$. We can see this as follows. The last vertex of $s_{X^*}^\alpha(M)$ is clearly in this region because it must be in $\mathcal{R}_1^\alpha(M) \cap \partial M$ and this equals $\mathcal{R}_2^\alpha(M) \cap \partial M$. Proceeding backward along $s_{X^*}^\alpha(M)$ from this final vertex, the path can only leave $\mathcal{R}_2^\alpha(M)$ if it (a) leaves $[-M,M]^2$ (b) crosses $s_0^\alpha(M)$ or $s_{x_2}^\alpha(M)$ or (c) crosses $L'_N$. Because none of these can happen, the fourth condition holds. As for the fifth, it is implied by $H_M$, so we have proved \eqref{eq: pizzapie99}.

Our fourth and final event will fix some random objects to be deterministic so that we can apply the edge modification lemma. On the event $\hat B_M^\alpha$, let $U$ denote the (random) closed connected subset of $[-M,M]^2$ with boundary curves $L'_0$, $L'_N$, $r_0^\alpha(M)$ and $r_2^\alpha(M)$. Note that $U \subseteq S(R,N)$. Furthermore we note that on $\hat B_M^\alpha$, $U \cap \mathcal{R}_2^\alpha(M)$ is contained in $L_N'$. This is because $\mathcal{R}_2^\alpha(M) \subseteq \{z : z \succeq L'_N\}$, whereas $U\subseteq \{z : z \preceq L'_N\}$. Last, define $U_\mathcal{E}$ to be the random set of edges with both endpoints in $U$ and which are not edges in $s_0^\alpha(M),s_{x_2}^\alpha(M), L_0'$ or $L_N'$. See Figure~\ref{fig: bddcase} for an illustration of these definitions. 

\begin{figure}[h]
\caption{Illustration of definitions on $\hat B_M^\alpha$. The region $U$ is in blue and is contained in $S(R,N)$ (not pictured). It is bounded by curves $L'_0$, $L'_N$, $r_0^\alpha(M)$ and $r_2^\alpha(M)$. The path $s_{x^*}$ begins at the final intersection point of the dotted path with $L'_N$.}
\label{fig: bddcase}
\centering
\includegraphics[scale=0.65]{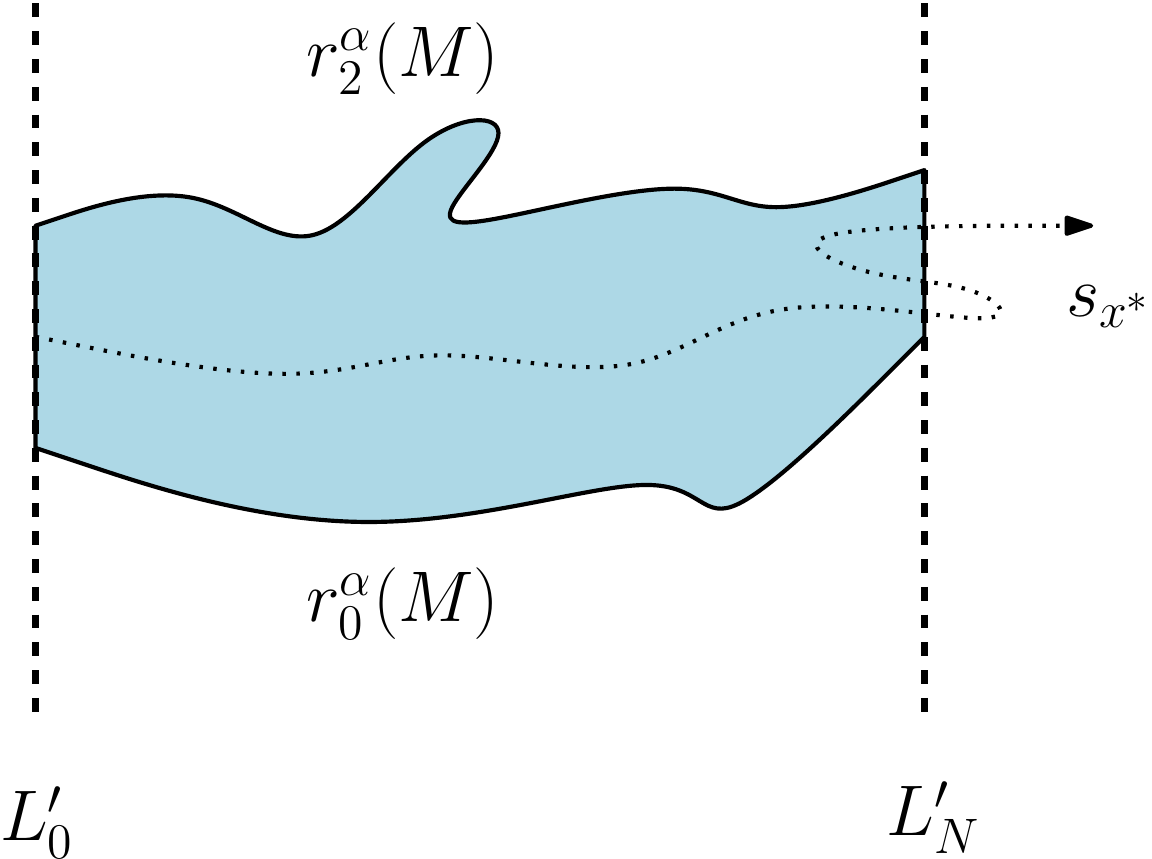}
\end{figure}

On $\hat{B}_M^\alpha,$ there are at most $2^{64NR}$ possibilities for $U$ and $U_\mathcal{E}$ and at most $2R$ choices for $X^*.$ So there exist some deterministic $U',$ $U_{\mathcal{E}}',$ and $x^*$ such that, if we define
\[
\tilde{B}_M^\alpha:= \hat{B}^\alpha_M \cap \{U = U', \, U_\mathcal{E} = U'_{\mathcal{E}}\}\cap \{X^* = x^*\}\ , 
\]
then
\begin{equation}\label{eq: nachos_bellgrande}
\passage(\tilde{B}_M^\alpha) > 2^{-2 - 64NR} \beta' / 2R \text{ for } M \geq M_0' \text{ and }\alpha \in \Lambda_{M,k}\ . 
\end{equation}
The meaning of the event $\{X^*=x^*\}$ is that the deterministic point $x^*$ satisfies the conditions in the fourth and fifth items of the description of $\hat B_M^\alpha$.

In the rest of the proof we perform the edge modification and push forward to $\widetilde \Omega$. To apply Lemma~\ref{lem: edge_modification} we need to verify that $\tilde B_M^\alpha$ is $e$-increasing for all $e \in U'_{\mathcal{E}}$. For this purpose, suppose that $\omega \in \tilde B_M^\alpha$ and that $\omega'$ is another configuration such that $\omega_e' \geq \omega_e$ for some fixed $e \in U'_{\mathcal{E}}$ but $\omega_f' = \omega_f$ for all other $f \neq e$. By construction, $e$ is not an edge of $s_0^\alpha(M)$, $s_{x^*}^\alpha(M)$ or $s_{x_2}^\alpha(M)$ ($e \notin s_{x^*}^\alpha(M)$ since $e$ is contained in $U_\mathcal{E}$, which does not meet $L_N'$, so is not in $\mathcal{R}_2^\alpha(M) \supseteq s_{x^*}^\alpha(M)$). Furthermore because $s_0^\alpha$, $s_{x^*}^\alpha$ and $s_{x_2}^\alpha$ do not re-enter $S(R,N)$ after leaving $[-M,M]^2$ and all edges of $U'_{\mathcal{E}}$ have both endpoints in $S(R,N)$, $e$ cannot be on these paths either. This means that 
\[
s_y^\alpha(\omega) = s_y^\alpha(\omega') \text{ for } y=0,x^*,x_2 \text{ and } U(\omega) = U(\omega'),~U_{\mathcal{E}}(\omega) = U_{\mathcal{E}}(\omega')\ .
\]
So the fifth condition of $\hat B_M^\alpha$ occurs in $\omega'$. The paths $s_y^\alpha(M)$ are then equal in $\omega$ and $\omega'$, so conditions 1, the first part of 2, and 3 and 4 hold in $\omega'$. As $e$ is not on any of these paths, their passage times are the same in $\omega'$. This gives the second part of condition 2 of $\hat B_M^\alpha$ and shows that $\tilde B_M^\alpha$ is $e$-increasing.

Now we conclude the proof in a slightly different manner depending on whether or not $\lambda_0^+$ is finite; we focus first on the case that $\lambda_0^+ < \infty.$ We will use Lemma~6.6, but several times in sequence, appending events onto $\hat B_M^\alpha$. Precisely we note for reference that if $e_1, \ldots, e_j$ are edges and $a_1, \ldots, a_j \in \mathbb{R}$ then
\[
\hat B_\alpha^M \cap \left[ \cap_{i=1}^j \{ \omega_{e_i} \geq a_i \} \right] \text{ is } e\text{-increasing for } e \in U'_{\mathcal{E}}\ .
\]
Using Lemma~\ref{lem: edge_modification} once for each edge $e \in U'_{\mathcal{E}}$ and the upper bound $|U'_{\mathcal{E}}| \leq 32 N R$, we can find some constant $C_{N,R}$ such that, defining
\[ 
B_M'^\alpha := \tilde{B}_M^\alpha \cap \left\{ \forall e \in U'_{\mathcal{E}}, \, \omega_e \geq \lambda^+ - \delta^+ / 4 \right\}\ ,
\]
we have
\[ 
\passage \left( B_M'^\alpha \right) > C_{N,R} > 0 \text{ for all }M \geq M_0' \text{ and } \alpha \in \Lambda_{M,k} \text{ when } k \geq K_0(M)\ .
\]
(For the first application of the lemma we use $\vartheta = 2^{-2-64NR}\beta'/2R$, for the second, a smaller $\vartheta$, and so on.)

We claim that on $B_M'^\alpha,$ no $z \in \Z2 \cap [-M,M]^2$ with $z \preceq L'_N$ and $z \notin S(R,N)$ has $s_z^\alpha(M) \cap s_{x^*}^\alpha(M) \neq \varnothing$. We argue by first estimating the passage time between vertices from $L_0'$ to $L_N'$ in $U'$. For any outcome in $B_M'^\alpha,$ given vertices $x \in U' \cap L'_0$ and $y \in U' \cap L'_N,$ there is a path from $x$ to $y$ formed by moving along $L'_0$ to $0,$ taking $r_0^\alpha$ to $L'_N,$ and moving similarly along $L'_N$ to $y.$  This gives
\begin{equation}
\label{fastish_path}
\tau(x,y) < (\lambda^+ - 7\delta^+/8)\|w_N^\alpha\|_1+ (N\varepsilon + \|x_2\|_1)\lambda^+ .
\end{equation}
Using the choice of $\varepsilon$ from \eqref{eq: epsilondef} and condition \eqref{eq: technical_condition} to bound the right side of (\ref{fastish_path}),
\begin{equation}
\label{whoa_so_fast}
\tau(x,y) \leq (\lambda^+ - 3 \delta^+ / 4) \|w_N^\alpha\|_1.
\end{equation}
Suppose now that a point $z$ exists as in the claim. Since $s_0^\alpha(M)$ and $s_{x_2}^\alpha(M)$ do not touch any $y \notin S(R,N)$ with $y \preceq L'_N$ (see item 1 in the definition of $\hat B_M^\alpha$), 
\[
\mathcal{R}_1^\alpha(M) \cap \{y  : y \preceq L'_N\} \subseteq S(R,N)\ .
\]
This implies $z \notin \mathcal{R}_1^\alpha(M)$, whereas $x^* \in \mathcal{R}_1^\alpha(M)$. As $s_z^\alpha(M)$ cannot touch $s_0^\alpha(M)$ or $s_{x_2}^\alpha(M)$ (else it would merge with one of them) it would have to enter $\mathcal{R}_1^\alpha(M)$ through $L_0'$ and pass through all of $U'$ from $L'_0$ to $L'_N$, thus taking only edges of $U'_{\mathcal{E}}.$  The portion $\gamma'$ of $\gamma$ from its first intersection with $L_0'$ to its first intersection with $L_N'$ would then satisfy
\begin{align*}
\tau(\gamma') &\geq \left(\lambda^+ - \delta^+ / 4 \right) \left[ \|w^\alpha_N\|_1 - \|x_2\|_1 - N \varepsilon\right]\\
	&\geq (\lambda^+ - \delta^+ / 4) \|w^\alpha_N\|_1 - 2 \|w^\alpha_N\|_1 \varepsilon \lambda^+\\
	&\geq (\lambda^+  - 3 \delta^+ / 8) \|w^\alpha_N\|_1,
\end{align*}
in contradiction with the estimate of (\ref{whoa_so_fast}). This establishes the claim.

For the final step in the case that $\lambda_0^+<\infty$, note that by the previous claim, the pushforward, $\Phi_\alpha (B_M'^\alpha)$, is a sub-event of $B_M'= B_M'(x^*;R,N),$ defined exactly as the event $B'=B'(x^*;R,N)$ in \eqref{eq: B_prime_def} except with $\Gamma_{x^*}$ and $\Gamma_z$ replaced by the truncated paths $\Gamma_{x^*}^M$ and $\Gamma_z^M$ and considering only $z \in [-M,M]^2$. Thus 
\[
\mu_\alpha(B_M') \geq C_{N,R} \text{ for all } M \geq M_0',~ k \geq K_0(M) \text{ and } \alpha \in \Lambda_{M,k}\ ,
\]
with $\Lambda_{M,k} \subseteq [0,n_k]$ of Lebesgue measure at least $C_{\beta'}n_k$. As the indicator of $B_M'$ is continuous,
\[
\mu(B_M') = \lim_{k \to \infty} \mu_{n_k}^*(B_M') \geq C_{N,R} C_{\beta'}\ .
\]
Last, 
\[
\mu(B') = \mu(B_M' \text{ for infinitely many }M) \geq C_{N,R} C_{\beta'} > 0\ ,
\]
completing the proof in the case $\lambda_0^+ < \infty$.

If $\lambda_0^+ = \infty,$ we are no longer guaranteed the estimate (\ref{whoa_so_fast}), since the passage time of a path taking $N \varepsilon$ steps along $L'_N$ is not necessarily bounded above by $N \varepsilon \lambda^+.$ However, writing $\tilde E$ for the set of edges with an endpoint within $\ell^1$ distance 1 of $U'$ but not in $U'_{\mathcal{E}}$ and noting
\[
A_C := \{\text{for all } e \in \tilde E,~ \tau_e \leq C\}
\]
satisfies $\mathbb{P}(A_C) \to 1$ as $C \to \infty$ independently of $k$ and $M$, we can choose $C_{\text{big}}$ such that 
\[
\mathbb{P}(\tilde B_M^\alpha \cap A_{C_{\text{big}}}) > 0
\]
independently of $k$ and $M$. This event is still monotone increasing in the appropriate edge variables. In particular, we can modify the edges in $U'_{\mathcal{E}}$ to be each larger than $2C_{\text{big}} |\tilde E|$ and the rest of the proof follows as in the case $\lambda_0^+< \infty$.
\end{proof}

\subsection{Deriving a contradiction}\label{sec: contradiction}

Given that the event $B'(x^*;R,N)$ of the preceding section has positive probability, we now derive a contradiction, proving that all paths in $\mathbb{G}$ must merge. The next lemma is an example of a mass-transport principle. (See \cite{BLS, Haggstrom1, Haggstrom2} for a more comprehensive treatment.)

\begin{lem}
\label{mass_transport}
Let $m: \Z2 \times \Z2 \rightarrow [0,\infty)$ be such that $m(x,y) = m(x+z,y+z)$ for all $x,y,z \in \mathbb{Z}^2.$
Then
\[
\forall x \in \Z2, \quad \sum_{y \in \Z2} m(x,y) = \sum_{y \in \Z2} m(y,x)\ .
\]
\end{lem}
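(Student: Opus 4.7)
My plan is to prove the identity by a simple change of variables, using the translation invariance of $m$ to re-center both sums at the origin.

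First, I would fix $x \in \mathbb{Z}^2$ and apply the hypothesis $m(a,b) = m(a+z,b+z)$ with $z = -x$ to the summand on the left, obtaining
\[
\sum_{y \in \mathbb{Z}^2} m(x,y) = \sum_{y \in \mathbb{Z}^2} m(0, y-x).
\]
Then, reindexing by $y' = y - x$ (a bijection of $\mathbb{Z}^2$), the right-hand side becomes $\sum_{y' \in \mathbb{Z}^2} m(0, y')$.

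Next, I would do the analogous manipulation on the right-hand side of the claim. Applying translation invariance with $z = -y$ gives $m(y,x) = m(0, x-y)$, so
\[
\sum_{y \in \mathbb{Z}^2} m(y,x) = \sum_{y \in \mathbb{Z}^2} m(0, x-y).
\]
Reindexing by $y'' = x - y$, which is again a bijection of $\mathbb{Z}^2$, this sum also equals $\sum_{y'' \in \mathbb{Z}^2} m(0, y'')$. Since both sides are equal to the same (possibly infinite) nonnegative series, the identity follows. Note that the nonnegativity of $m$ is what allows the re-indexing without any convergence hypothesis: in the extended sense in $[0,\infty]$, the sums are unconditional.

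There is no real obstacle here; the only subtlety worth flagging is that one must allow both sides to take the value $+\infty$ and interpret equality in $[0,\infty]$. No finiteness or absolute convergence is needed, precisely because $m \geq 0$. This lemma will presumably be applied later with $m(x,y)$ chosen to be the probability of some translation-invariant event involving the pair $(x,y)$, so that both sums are actually finite and the identity gives the usual ``mass in equals mass out'' principle.
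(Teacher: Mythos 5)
Your proof is correct and takes essentially the same approach as the paper: both reduce the claim to a change of variables combined with translation invariance, with the paper passing directly from $\sum_z m(x,x+z)$ to $\sum_z m(x-z,x)$ while you route both sums through $\sum m(0,\cdot)$. The remark about interpreting the sums in $[0,\infty]$ is a sensible clarification but adds nothing essentially new.
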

\begin{proof}
Write
\begin{align*}
\sum_{y \in \Z2} m(x,y) = \sum_{z \in \Z2} m(x,x+z) = \sum_{z \in \Z2} m(x-z,x) = \sum_{y \in \Z2} m(y,x)\ .
\end{align*}
\end{proof}
Given a realization of $\mathbb{G}$ and $x \in \mathbb{Z}^2$, order the set 
\begin{equation}\label{eq: backclusterdef}
C_x = \{ y \in \Z2: y \to x \text{ in } \mathbb{G}\}
\end{equation}
using a dictionary-type ordering where $y$ precedes $y'$ if either $\varpi' \cdot y < \varpi' \cdot y'$ or if both $\varpi' \cdot y = \varpi' \cdot y'$ and $y \cdot \varsigma < y' \cdot \varsigma$ (where $\varsigma$ was fixed in \eqref{eq: chicken_alfredo}); clearly this defines a total ordering. If there is a least element $y$ under this ordering, we will call $y$ the progenitor of $x$ (relative to $\mathbb{G}$).
We define the $\mathbb{G}$-dependent function $m_\mathbb{G}$ on pairs of vertices $x,y$ by
\[ m_{\mathbb{G}}(x,y) = \begin{cases}
	1 &\text{if $y$ is the progenitor of $x$}\\
	0 &\text{otherwise},
	\end{cases} \]
and let $m(x,y):= \mathbb{E}_\mu(m_{\mathbb{G}}(x,y)).$ Note that $m(x,y) = m(x+z,y+z)$ by the fact that $\mathbb{G}$ has a translation-invariant distribution.

Since each $x$ can have at most one progenitor, 
\begin{equation}
\label{small_mass}
\sum_{y\in \mathbb{Z}^2} m(x,y) \leq 1 \text{ for all } x\in \mathbb{Z}^2\ .
\end{equation}
On the other hand, if $B'(x^*; R,N)$ occurs, then $\Gamma_z$ cannot intersect $\Gamma_{x^*}$ if $z \preceq L'_N$ and $z \notin S(R,N).$ Therefore, on this event, there is some vertex $y \in S(R,N)$ which is the progenitor of infinitely many vertices of $\Gamma_{x^*}.$ In particular,
\begin{equation}
\label{big_mass}
\sum_{y \in \mathbb{Z}^2} m(y,x) = \infty.
\end{equation}
The contradiction implied by (\ref{small_mass}), (\ref{big_mass}) and Lemma \ref{mass_transport} gives $\mu(B'(x^*;R,N))=0$. However this contradicts the previous section and completes the proof of Theorem~\ref{thm: Gcoalescethm}.

\subsection{Absence of backward infinite paths}

In this section, we move on from Theorem~\ref{thm: Gcoalescethm} to show that because all paths in $\mathbb{G}$ coalesce, all paths in the ``reverse" direction terminate. That is, recalling the definition of $C_x$ in \eqref{eq: backclusterdef},

\begin{thm}
\label{no_back_path}
For each $x \in \Z2,$ $|C_x| < \infty$ with $\mu$-probability one.
\end{thm}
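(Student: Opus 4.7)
The plan is to argue by contradiction using a Burton--Keane trifurcation count combined with the coalescence of Theorem~\ref{thm: Gcoalescethm}. I would first suppose $\mu(|C_0|=\infty)=q>0$; by translation invariance, $\mu(|C_x|=\infty)=q$ for every $x\in\mathbb{Z}^2$. Set $T=\{v:|C_v|=\infty\}$. Two structural observations are immediate: (a) $T$ is closed under out-edges of $\mathbb{G}$, because if $v\to v'$ in $\mathbb{G}$ then $C_{v'}\supseteq\{v\}\cup C_v$; and (b) for each $v\in T$, K{\"o}nig's lemma applied to the bounded-degree subtree $C_v$ yields an infinite self-avoiding backward ray ending at $v$, whose penultimate vertex is an in-neighbor of $v$ lying in $T$.

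Call $v\in T$ a \emph{trifurcation} if at least two distinct in-neighbors of $v$ in $\mathbb{G}$ belong to $T$; by (b) and Proposition~\ref{prop: firstGG2}(4), such a $v$ then admits three pairwise-disjoint infinite arms in $\mathbb{G}$ (two backward rays plus the forward ray $\Gamma_v$). By Theorem~\ref{thm: Gcoalescethm} combined with Proposition~\ref{prop: secondGG2}, $\mathbb{G}$ is (a.s.) a tree, so a standard induction shows that removing $k$ trifurcations from $B_n=[-n,n]^2$ leaves at least $2k+1$ infinite components, each of which must exit through a distinct vertex of $\partial B_n$. Thus $|\mathrm{Trif}\cap B_n|\leq (|\partial B_n|-1)/2=O(n)$ deterministically, and combined with the translation-invariant identity $\mathbb{E}_\mu|\mathrm{Trif}\cap B_n|=\mu(0\text{ is a trifurcation})\cdot(2n+1)^2$ this forces $\mu(0\text{ is a trifurcation})=0$.

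Consequently, with $\mu$-probability one, every $v\in T$ has a unique in-$T$-neighbor $\pi(v)$ and a unique out-$T$-neighbor $\sigma(v)$, so $T$ is a disjoint union of bi-infinite self-avoiding directed paths in $\mathbb{G}$. If $v_1,v_2\in T$ lay on distinct such paths, Theorem~\ref{thm: Gcoalescethm} would force $\Gamma_{v_1}$ and $\Gamma_{v_2}$ to coalesce at some vertex $u$, which would then have two distinct in-neighbors in $T$ and hence be a trifurcation---contradiction. Therefore $T$ is a single bi-infinite directed path $\{t_i\}_{i\in\mathbb{Z}}$ with $\sigma(t_i)=t_{i+1}$.

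To close the argument I would use Proposition~\ref{prop: firstGG2}(2) to write $f(t_i,t_j)=\tau(t_i,t_j)$ for $i<j$; combining with the Cox--Durrett shape theorem for $\tau$ and Theorem~\ref{shapetheorem} gives $(t_j-t_i)\cdot\varrho\sim g(t_j-t_i)$, so $t_i$ is asymptotically directed in $\pm J_\varrho$. Since the $T$-chain is a geodesic of edge-length $|i|$ between $t_0$ and $t_i$, positivity of the time constant (available under {\bf A1} or {\bf A2}) gives $\|t_i-t_0\|_2\geq c|i|$ a.s., so $|T\cap B_n|\leq Cn$ a.s. Dominated convergence then yields $q(2n+1)^2=\mathbb{E}_\mu|T\cap B_n|=O(n)$, forcing $q=0$. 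The main obstacle is this last density step: obtaining the linear lower bound $\|t_i-t_0\|_2\gtrsim|i|$ along the $T$-chain requires combining the Busemann identity $f=\tau$ on $T$ with shape-theoretic control of geodesic lengths under {\bf A1} or {\bf A2}, which is not automatic when weights can vanish.
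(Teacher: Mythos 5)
Your approach shares the Burton--Keane backbone with the paper, but it follows a structurally different route to the contradiction, and that route has a genuine gap which you have correctly flagged. The paper does not pass through the ``\,$T$ is a single bi-infinite path'' characterization at all: it instead produces a positive density of encounter points directly from the assumption $\mu(|C_x|=\infty)>0$. Concretely, using Lemma~\ref{lem: varpilemma} (to fix a direction $\varpi'$ and condition on the translation-invariant event that every forward path eventually leaves any near-side half-plane), the paper finds $z,z'\in L'_0$ with infinite backward clusters and whose forward paths intersect $L'_0$ only once, then uses coalescence (Theorem~\ref{thm: Gcoalescethm}) to locate a deterministic merge vertex $z_0$ occurring with positive probability. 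Removing $z_0$ splits the tree $\mathbb{G}$ into three infinite pieces (two infinite backward sides because $|C_z|,|C_{z'}|=\infty$, one infinite forward ray $\Gamma_{z_0}$), so $z_0$ is an encounter point; Burton--Keane then contradicts the positive density of such points. No bound on $|T\cap B_n|$ is needed.

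Your first two phases are sound: Burton--Keane shows trifurcations have zero density, and the coalescence theorem then forces $T$ (if nonempty) to be a single bi-infinite directed path. The problem is the final density step. You need $\|t_i-t_0\|\gtrsim|i|$, i.e.\ that geodesic segments have length linearly comparable to their displacement. Positivity of the time constant (boundedness of $\mathcal{B}$) gives $\tau(0,x)\geq c\|x\|$ but does \emph{not} control the number of edges a geodesic uses when zero-weight (or very small-weight) edges are present. Under \textbf{A1'} this is rescued by the Kesten-type result that $\mathbb{P}(\omega_e=0)<p_c$ implies a.s.\ linear bounds on geodesic length, but you did not invoke it; under \textbf{A2'} no such result is available in the paper or its hypotheses, so your argument does not cover that case. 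A cleaner patch, staying within your framework, is to skip the density bound on $T$ entirely: import the paper's Claim~7.6 (which needs Lemma~\ref{lem: varpilemma} and the conditioning step), observe that the coalescence vertex $z_0$ of $\Gamma_z$ and $\Gamma_{z'}$ then has two distinct in-neighbors in $T$ (since the vertices on $\Gamma_z$ and $\Gamma_{z'}$ preceding $z_0$ lie in $T$ by your observation (a), and are distinct because $z_0$ is the first meeting point), hence is a trifurcation with positive probability---contradicting your own density-zero conclusion and finishing under both assumptions.
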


\begin{rem}
The proof below applies to the following general setting. Suppose $\nu$ is a translation-invariant probability measure on directed subgraphs of $\mathbb{Z}^2$ and there is a line $L \subseteq \mathbb{R}^2$ such that $\nu$-almost surely (a) each $x$ has exactly one forward path and it is infinite (b) all forward paths coalesce and (c) each forward infinite path emanating from a vertex on $L$ intersects it finitely often. Then all backward clusters are finite $\nu$-almost surely.
\end{rem}

We assume that, contrary to the theorem, there exists $x \in \Z2$ with
$\mu(|C_x|=\infty)>0$ for the remainder of this section to derive a contradiction. Using Lemma~\ref{lem: varpilemma}, choose a deterministic $\varpi'$ with argument in $\{j \pi/4 : j = 0, \ldots, 7\}$ such that with positive $\mu$-probability on $\{|C_x| = \infty\}$, each $\Gamma_z$ eventually lies on the far side of each $L'_N$. Note that this event is translation-invariant, so by conditioning on it, we may assume that it occurs with probability 1 (and $\mu$ is still translation-invariant).

\begin{clam}
There exist vertices $z \neq z'$ in $L'_0$ such that
\begin{equation}
\label{triple_pt}
 \mu\left(|C_z| = \infty,\, |C_{z'}| = \infty, \, \Gamma_z \cap L'_0 = \{z\}, \, \Gamma_{z'} \cap L'_0 = \{z'\}\right) > 0\ .
\end{equation}
\end{clam}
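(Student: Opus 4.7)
The plan is to establish the existence of two distinct points in the random set
\[
S := \{z \in L'_0 \cap \mathbb{Z}^2 : |C_z| = \infty \text{ and } \Gamma_z \cap L'_0 = \{z\}\}
\]
with positive $\mu$-probability, and then extract a specific deterministic pair $(z,z')$ by countable additivity and translation invariance along $L'_0$.

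First, I would argue that $S$ is nonempty on $A := \{|C_0| = \infty\}$, which has $\mu(A) > 0$ by hypothesis. On $A$, the path $\Gamma_0$ starts at $0 \in L'_0$ and (by the event of Lemma~\ref{lem: varpilemma} which we have conditioned on) eventually lies on the far side of $L'_0$, so $\Gamma_0 \cap L'_0$ is a finite nonempty set. Let $z^*$ be its last element. By Proposition~\ref{prop: secondGG2}, the tail of $\Gamma_0$ from $z^*$ onward equals the unique forward path $\Gamma_{z^*}$, and by choice of $z^*$ this tail avoids $L'_0$, so $\Gamma_{z^*} \cap L'_0 = \{z^*\}$. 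Moreover, every $y \in C_0$ satisfies $y \to 0$, and concatenating with the segment of $\Gamma_0$ from $0$ to $z^*$ gives $y \to z^*$; hence $C_0 \subseteq C_{z^*}$ and $|C_{z^*}| = \infty$. Thus $z^* \in S$, which shows $\mu(S \neq \emptyset) \geq \mu(A) > 0$.

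Second, since $L'_0 \cap \mathbb{Z}^2 = \{n\varsigma : n \in \mathbb{Z}\}$ with $\varsigma$ as in \eqref{eq: chicken_alfredo}, we have
\[
\{S \neq \emptyset\} = \bigcup_{n \in \mathbb{Z}} \{n\varsigma \in S\},
\]
so countable subadditivity yields some $n_0$ with $\mu(n_0\varsigma \in S) > 0$. Because the defining conditions of $S$ are equivariant under the translation $\tilde T_\varsigma$ (which preserves $L'_0$, sends $\Gamma_z$ to $\Gamma_{z-\varsigma}$, and sends $C_z$ to $C_{z-\varsigma}$), and $\mu$ is $\tilde T_\varsigma$-invariant, we deduce
\[
p := \mu(0 \in S) = \mu(n\varsigma \in S) > 0 \text{ for every } n \in \mathbb{Z}.
\]
A one-line expectation estimate then gives multiplicity: if $|S| \leq 1$ held $\mu$-a.s., we would have $\mathbb{E}_\mu|S| \leq 1$, but
\[
\mathbb{E}_\mu |S| = \sum_{n \in \mathbb{Z}} \mu(n\varsigma \in S) = \sum_{n \in \mathbb{Z}} p = \infty,
\]
a contradiction. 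Hence $\mu(|S| \geq 2) > 0$.

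Finally, decomposing
\[
\{|S| \geq 2\} \subseteq \bigcup_{m \neq n} \{m\varsigma \in S,\, n\varsigma \in S\}
\]
as a countable union and applying subadditivity once more produces deterministic integers $m \neq n$ with $\mu(m\varsigma \in S,\, n\varsigma \in S) > 0$. Setting $z = m\varsigma$ and $z' = n\varsigma$ yields \eqref{triple_pt}. I expect the only subtle point is the first step, namely verifying that the random "last intersection'' vertex $z^*$ really inherits the infinite backward cluster from $0$; everything else is standard bookkeeping with countable unions and translation invariance.
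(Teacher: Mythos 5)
Your proof is correct and shares the essential geometric idea with the paper's (track the last intersection of a forward path with $L'_0$, observe that transitivity of $\to$ pushes the infinite backward cluster forward to that last intersection point, then decompose over the countably many candidate vertices). The one genuine difference is at the final step. The paper, having found a single $z\in L'_0$ with $\mu(|C_z|=\infty,\,\Gamma_z\cap L'_0=\{z\})>0$, invokes the ergodic theorem along translates by $\varsigma$ (in the spirit of the Poincar\'e--recurrence argument already set up in Section~\ref{sec: building_blocks}) to exhibit two translates occurring simultaneously with positive probability. You instead use a first-moment argument: translation invariance gives $\mu(n\varsigma\in S)=p>0$ for every $n$, so $\mathbb{E}_\mu|S|=\sum_n p=\infty$, which forces $\mu(|S|\geq 2)>0$, and a last countable decomposition produces the deterministic pair $(z,z')$. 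This is a bit more elementary than the paper's route (it does not need Birkhoff's theorem, only linearity of expectation and countable subadditivity), though the ergodic-theorem version yields the slightly stronger conclusion that infinitely many sites of $L'_0$ lie in $S$ simultaneously --- which is not needed for the Claim. The cosmetic difference of starting from $x=0\in L'_0$ rather than $x\prec L'_0$ is harmless, as you note: $\Gamma_0\cap L'_0$ is still finite and nonempty after conditioning on the Lemma~\ref{lem: varpilemma} event, and the argument goes through unchanged.
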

\begin{proof}
By translation-invariance, we may assume that the $x$ with $\mu(|C_x|=\infty)>0$ satisfies $x \prec L'_0.$ $\mu$-almost surely, $\Gamma_x$ has a last intersection with $L'_0.$ There are countably many choices for such a last intersection, so there exists a vertex $z \in L'_0$ such that
\[
\mu\left( |C_z| = \infty, \, \Gamma_z \cap L'_0 = \{z\}\right) > 0\ .
\]
Translating by $\varsigma$ (chosen from \eqref{eq: chicken_alfredo}), the ergodic theorem gives $z, z'$ satisfying (\ref{triple_pt}).
\end{proof}

\begin{proof}[Proof of Theorem \ref{no_back_path}.]
Given an outcome in the event in (\ref{triple_pt}), $\Gamma_{z}$ and $\Gamma_{z'}$ almost surely merge. So there is some random $z_{\mathbb{G}} \in \Z2$ which is the first intersection point of $\Gamma_{z}$ and $\Gamma_{z'}$ (``first" in the sense of both the ordering in $\Gamma_z$ and in the ordering of $\Gamma_{z'}$). Again $z_{\mathbb{G}}$ can take only countably many values, and so there is a $z_0$ which occurs with positive probability; call the intersection of the event in (\ref{triple_pt}) with the event $\{z_{\mathbb{G}} = z_0\}$ by the name $B.$

We now consider the graph $\mathbb{G}$ as an undirected graph, in which vertices $x$ and $y$ are adjacent if $\langle x,y \rangle$ or $\langle y,x \rangle$ are in $\mathbb{G}$ (we abuse notation by using the same symbol for both the directed and undirected versions of $\mathbb{G}$). We define an encounter point of the undirected $\mathbb{G}$ to be a vertex whose removal splits $\mathbb{G}$ into at least three infinite components. Note that $B \subseteq \{z_0$ is an encounter point$\}$; by translation invariance, we see that there is a uniform $c_t > 0$ such that the probability of any fixed vertex to be an encounter point is at least $c_t.$

We are now in the setting of Burton-Keane \cite{burtonkeane}. To briefly synopsize, the number of points on the boundary of $[-M,M]^2$ must be at least the number of encounter points within. In particular, the number of encounter points is surely bounded above by $8M$. But since each point within has probability at least $c_t$ to be an encounter point, the expected number of encounter points within $[-M,M]^2$ is at least $c_t M^2.$ This is a contradiction for large $M.$

\end{proof}

\section{Proofs of main theorems}\label{sec: proofs}

\subsection{Proof of Theorem~\ref{thm: sectors}}\label{sec: sectors}
Suppose that $\partial \mathcal{B}$ is differentiable at $v_\theta = \varpi$ and construct the measure $\mu$ as in Section~\ref{sec: mudef}. Using the notation of Theorem~\ref{thm: nachostheorem}, we set 
\[
L_\varrho = \{x \in \mathbb{R}^2 : x \cdot \varrho = 1\}\ .
\]
From the theorem, we deduce that with $\mu$-probability 1, $\Gamma_0$ is asymptotically directed in $J_\varrho$. But by the assumption of differentiability, $J_\varrho = I_\theta$ with $\mu$-probability 1 and thus
\begin{equation}\label{eq: ptonboyz1}
\mu\left( \Gamma_0 \text{ is asymptotically directed in } I_\theta \right) = 1\ .
\end{equation}
By Proposition~\ref{prop: firstGG2}, each finite piece of $\Gamma_0$ is a geodesic, so $\Gamma_0$ is an infinite geodesic. Define $\hat \Omega \subseteq \Omega_1$ as the set
\[
\hat \Omega = \{\omega \in \Omega_1 : \mu(\Gamma_0 \text{ is asymptotically directed in }I_\theta \mid \omega) = 1\}\ .
\]
The inner probability measure is the regular conditional probability measure. The set $\hat \Omega$ is measurable and because the marginal of $\mu$ on $\Omega_1$ is $\mathbb{P}$, it satisfies $\mathbb{P}(\hat \Omega)=1$. Further, for each $\omega \in \hat \Omega$ there is an infinite geodesic from 0 which is asymptotically directed in $I_\theta$.

\subsection{Proof of Theorem~\ref{thm: newman}}\label{sec: newman}

In this section we assume either {\bf A1'} or {\bf A2'}. Assume that the limit shape $\mathcal{B}$ has uniformly positive curvature. Then the boundary $\partial \mathcal{B}$ cannot contain any straight line segments. This implies that the extreme points $ext(\mathcal{B})$ are dense in $\partial \mathcal{B}$. Choose some countable set $D \subseteq ext(\mathcal{B})$ that is dense in $\partial \mathcal{B}$. For any $\theta_1$ and $\theta_2$ with $0<dist(\theta_1,\theta_2) < \pi$, let $I(\theta_1,\theta_2)$ be the set of angles corresponding to the shorter closed arc of $\partial \mathcal{B}$ from $v_{\theta_1}$ to $v_{\theta_2}$. By Corollary~\ref{cor: extreme}, for each $\theta_1,\theta_2 \in D$ with $0<dist(\theta_1,\theta_2)<\pi$, with probability one there is an infinite geodesic from 0 asymptotically directed in $I(\theta_1,\theta_2)$. The collection of such sets of angles is countable, so there exists an event $\Omega'\subseteq \Omega_1$ such that $\mathbb{P}(\Omega') = 1$ and for each $\omega \in \Omega'$,
\begin{enumerate}
\item for each $\theta_1, \theta_2 \in D$ such that $dist(\theta_1,\theta_2) < \pi$, there exists an infinite geodesic containing 0 and asymptotically directed in $I(\theta_1,\theta_2)$ and
\item for each $x,y \in \mathbb{Z}^2$ there is exactly one geodesic from $x$ to $y$.
\end{enumerate}
We claim that for each $\omega \in \Omega'$, both statements of the theorem hold: for each $\theta$ there is an infinite geodesic with asymptotic direction $\theta$ and each infinite geodesic has a direction. 

To prove the first statement, let $\omega \in \Omega'$ and $\theta \in [0,2\pi)$. For distinct angles $\theta_1$ and $\theta_2$ such that $0 < dist(\theta_i,\theta)< \pi$ we write $\theta_1 >_\theta \theta_2$ if $I(\theta_1,\theta)$ contains $\theta_2$. Because $D$ is dense in $\partial \mathcal{B}$, we can find two sequences $(\theta_n^1)$ and $(\theta_n^2)$ such that (a) $0 < dist(\theta_n^i,\theta)<\pi$ for all $n$ and $i$, (b) for $i=1,2$, $dist(\theta_n^i,\theta) \to 0$ as $n \to \infty$ and (c) for each $i=1,2$ and $n$, $\theta_n^j >_\theta \theta_{n+1}^j$. Let $v_n$ be the point $nv_\theta$ and let $\gamma_n$ be the geodesic from $0$ to $v_n$. Define $\gamma$ as any subsequential limit of $(\gamma_n)$. By this we mean a path $\gamma$ such that for each finite subset $E$ of $\mathbb{R}^2$, the intersection $\gamma_n \cap E$ equals $\gamma \cap E$ for all large $n$. We claim that $\gamma$ has asymptotic direction $\theta$.

Let $\e>0$ and choose $N$ such that $dist(\theta,\theta_N^j)<\e$ for $j=1,2$. Because $\omega \in \Omega'$, for $j=1,2$, we can choose an infinite geodesic $\gamma_N^j$ containing 0 with asymptotic direction in $I(\theta_N^j,\theta_{N+1}^j)$. Write $P$ for the union of $\gamma_N^1$ and $\gamma_N^2$. This complement of $P$ in $\mathbb{R}^2$ consists of two open connected components (as $P$ cannot contain a circuit). Because both paths are directed away from $\theta$, exactly one of these two components contains all but finitely many of the $nv_\theta$'s. Let $C_1$ be the union of $P$ with this component and let $C_2$ be the other component.

Choose $N_0$ so that $nv_\theta \in C_1$ for all $n \geq N_0$. We claim now that each finite geodesic $\gamma_n$ for $n \geq N_0$ is contained entirely in $C_1$. If this were not true, $\gamma_n$ would contain a vertex $z$ in $C_2$ and therefore it would cross $P$ to get from $z$ to $v_n$. Then if $w$ is any vertex on $\gamma_n \cap P$ visited by $\gamma_n$ after $z$, then there would be two different geodesics from $0$ to $w$ and this would contradict unique passage times. Therefore, as $\gamma_n$ is contained in $C_1$ for all large $n$, so must $\gamma$. This implies that $\gamma$ is asymptotically directed in the set of angles within distance $\e$ of $\theta$ (for each $\e>0$) and therefore has asymptotic direction $\theta$.

To prove the second statement choose $\omega \in \Omega'$ and let $\gamma$ be an infinite geodesic. If $\gamma$ does not have an asymptotic direction then, writing $x_n$ for the $n$-th vertex of $\gamma$, we can find an angle $\phi \in [0,2\pi)$ such that $\phi$ is a limit point of $\{\arg x_n : n \geq 1\}$ (under the metric $dist$) but $(\arg x_n)$ does not converge to $\phi$. So there exists a number $\e$ with $0<\e<\pi$ and a subsequence $(x_{n_k})$ of $(x_n)$ such that for each $m$, $dist(\arg x_{n_{2m}}, \phi) < \e/2$ but $dist(\arg x_{n_{2m+1}}, \phi) > \e$. By the first part of the theorem we can find infinite geodesics $\gamma_1$ and $\gamma_2$ from $0$ such that $\gamma_1$ has asymptotic direction $\phi + 3\e/4$ and $\gamma_2$ has asymptotic direction $\phi - 3\e/4$. Now it is clear that if we write $P$ for the union of $\gamma_1$ and $\gamma_2$ then $\gamma$ must both contain infinitely many vertices of $P$ and infinitely many vertices of $P^c$. This again contradicts unique passage times.

\begin{proof}[Proof of Corollary~\ref{cor: newman2}]
If $\theta$ is an exposed point of differentiability then by Corollary~\ref{cor: exposed}, with probability one there exists an infinite geodesic from 0 in each rational direction. Then the proof above goes through with minor modifications.
\end{proof}

\subsection{Proof of Theorem~\ref{thm: random_hyperplanes}}

Assume either {\bf A1'} or both {\bf A2'} and the upward finite energy property. Let $v \in \mathbb{R}^2$ be nonzero and $\e>0$. We will prove that the statement of the theorem holds with probability at least $1-\e$. Choose $\varpi \in \partial \mathcal{B}$ to be parallel to $v$ and construct a measure $\mu$ as in Section~\ref{sec: mudef}. Let $(n_k)$ be an increasing sequence such that $\mu_{n_k}^* \to \mu$ weakly.

We will define a double sequence of cylinder events that approximate the events in the theorem. For $m \leq n$, a configuration $\eta \in \Omega_3$ and $x,y \in [-m,m]^2 \cap \mathbb{Z}^2$, we say that $x$ is $n$-connected to $y$ ($x \to_n y$) if there exists a directed path from $x$ to $y$ whose vertices stay in $[-n,n]^2$. We say that $x$ and $y$ are $n$-connected $(x \leftrightarrow_n y)$ if there is an undirected path connecting $x$ and $y$ in $[-n,n]^2$. For $m \leq n$ write $A_{m,n} \subseteq \Omega_3$ for the event that
\begin{enumerate}
\item all vertices $v \in [-m,m]^2$ have exactly one forward neighbor in $\mathbb{G} \cap [-n,n]^2$,
\item there is no undirected circuit contained in $[-m,m]^2$,
\item for all vertices $v,w \in [-m,m]^2$, there exists $z \in [-n,n]^2$ such that $v \to_n z$ and $w \to_n z$ and
\item for all vertices $v \in [-m,m]^2$ there is no $z \in [-n,n]^2 \setminus (-n,n)^2$ such that $z \to_n v$.
\end{enumerate}

We claim that for any $m$ there exists $n(m) \geq m$ such that $\mu(A_{m,n(m)}) > 1-\e/4^{m+2}$. To prove this, let $\hat \Omega \subseteq \widetilde \Omega$ be the event that (a) all vertices have one forward neighbor in $\mathbb{G}$, (b) $\mathbb{G}$ has no undirected circuits, (c) for all $x,y \in \mathbb{Z}^2$, $\Gamma_x$ and $\Gamma_y$ coalesce and (d) $|C_x| < \infty$ for all $x \in \mathbb{Z}^2$. By Proposition~\ref{prop: secondGG2}, Theorem~\ref{thm: Gcoalescethm} and Theorem~\ref{no_back_path}, the $\mu$-probability of $\hat \Omega$ is 1. Therefore conditions 1 and 2 above have probability 1 for all $m$ and $n$. For any configuration in $\hat \Omega$ and $m \geq 1$ we can then choose a random and finite $N(m) \geq m$ to be minimal so that conditions 3 and 4 hold for all $n \geq N(m)$. Taking $n(m)$ so large that $\mu(N(m) \geq n(m)) \leq \e/4^{m+1}$ completes the proof of the claim.

We now pull $A_{m,n(m)}$ back to $\Omega_1$, using the fact that it is a cylinder event in $\Omega_3$ and thus its indicator function is continuous. There is an $m$-dependent number $K_0(m)$ such that if $k \geq K_0(m)$ then $\mu_{n_k}^*(A_{m,n(m)}) > 1-\e/4^{m+2}$. By definition of $\mu_{n_k}^*$ in \eqref{eq: munstar} and $\Phi_\alpha$ in \eqref{eq: phidef}, the set $\Lambda_{m,k}$ of values of $\alpha \in [0,n_k]$ such that $\mathbb{P}(\Phi_\alpha^{-1}(A_{m,n(m)})) > 1-\e/2^{m+2}$ has Lebesgue measure at least $n_k(1-2^{-(m+2)})$. 

The next step is to construct a deterministic sequence $(a_m)_{m \geq 1}$ of real numbers such that
\begin{equation}\label{eq: clambake}
a_m \to \infty \text{ and } \mathbb{P}\left( \cap_{j=1}^m \Phi_{a_m}^{-1} (A_{j,n(j)}) \right) \geq 1-\e/2 \text{ for all } m\ .
\end{equation}
We do this by induction on $m$. For $m=1$, let $a_1$ be any number in the set $\Lambda_{1,K_0(1)}$. By definition then $\mathbb{P}(\Phi_{a_1}^{-1}(A_{1,n(1)})) \geq 1-\e/2$. Assuming that we have fixed $a_1, \ldots, a_m$, we now define $a_{m+1}$. Let $k$ be such that $k \geq \max \{K_0(1), \ldots, K_0(m+1)\}$ and $n_k \geq 3a_m$ and consider $\Lambda_{1,k}, \ldots, \Lambda_{m+1, k}$ as above. The intersection of these sets has Lebesgue measure at least $3n_k/4$ so choose $a_{m+1}$ as any element of the nonempty set $(3a_m/2,n_k] \cap \left[ \cap_{i=1}^{m+1} \Lambda_{i,k}\right]$. For this choice,
\[
1- \mathbb{P}\left(\cap_{j=1}^{m+1} \Phi_{a_{m+1}}^{-1}(A_{j,n(j)}) \right) \leq \sum_{j=1}^\infty \e /2^{j+2} = \e/4\ .
\]
As $a_{m+1} \geq 3a_m/2$, the condition $a_m \to \infty$ holds and we are done proving \eqref{eq: clambake}.

From \eqref{eq: clambake}, we deduce $\mathbb{P}(A) \geq 1-\e/2$, where
\[
A = \{\cap_{j=1}^m \Phi_{a_m}^{-1}(A_{j,n(j)}) \text{ occurs for infinitely many } m\}\ .
\]
We complete the proof by showing that the statement of the theorem holds for any $\omega\in A$. Fix such an $\omega$ and a random subsequence $(a_{m_k})$ of $(a_m)$ such that $\omega \in \cap_{j=1}^{m_k}\Phi_{a_{m_k}}^{-1}(A_{j,n(j)})$ for all $k$. By extracting a further subsequence, we may assume that $\mathbb{G}_{L_{a_{m_k}}(\varpi)}$ converges to some graph $G$. The event $\Phi_{\alpha}^{-1}(A_{j,n(j)})$ is exactly that the graph $\mathbb{G}_{L_\alpha(\varpi)}$ satisfies the conditions of $A_{j,n(j)}$ above, so in particular, it has no undirected circuits in $[-j,j]^2$, all directed paths starting in $[-j,j]^2$ coalesce before leaving $[-n(j),n(j)]^2$, no directed paths connect $[-n(j),n(j)]^2 \setminus (-n(j),n(j))^2$ to $[-j,j]^2$, and all vertices in $[-j,j]^2$ have one forward neighbor in $[-n(j),n(j)]^2$. On the subsequence $(a_{m_k})$, the events $\Phi_{a_{m_k}}^{-1}(A_{1,n(1)})$ occur for all $k$, so $G$ must satisfy the conditions of $A_{1,n(1)}$ as well. The same is true for $A_{j,n(j)}$ for all $j$, so $G$ satisfies the conditions of the theorem.

\subsection{Proof of Theorem~\ref{thm: exceptional_set}}

This theorem follows directly from results of the previous sections. Assume either {\bf A1'} or both {\bf A2'} and the upward finite energy property. For the first part of the theorem, suppose that $\partial \mathcal{B}$ is differentiable at $v_\theta$. Choose $\varpi = v_\theta$ and construct the measure $\mu$ as in Section~\ref{sec: mudef}. Given $(\omega,\Theta,\eta) \in \widetilde \Omega$, let $\mathbb{G}(\eta)$ be the geodesic graph associated to $\eta$. By Theorems~\ref{thm: nachostheorem}, \ref{thm: Gcoalescethm} and \ref{no_back_path}, with $\mu$-probability one, all directed paths in $\mathbb{G}$ are asymptotically directed in $I_\theta$, they coalesce, and no vertex $x$ has $|C_x|$ infinite. Call this event $A$ and define
\[
\hat \Omega = \{\omega \in \Omega_1 : \mu(A \mid \omega) = 1\}\ .
\]
$\mu(\cdot \mid \omega)$ is the regular conditional probability measure. $\hat \Omega$ is a measurable set and satisfies $\mathbb{P}(\hat \Omega)=1$ since the marginal of $\mu$ on $\Omega_1$ is $\mathbb{P}$. Further, for each $\omega \in \hat \Omega$, the theorem holds.

For the other two parts of the theorem we simply argue as in the proof of Corollaries~\ref{cor: exposed} and \ref{cor: extreme}. In the former case we just notice that if $v_\theta$ is also exposed, then $I_\theta = \{\theta\}$. In the latter case, we find a point $v_\theta$ on the arc joining $v_{\theta_1}$ to $v_{\theta_2}$ at which $\partial \mathcal{B}$ is differentiable. The set $I_\theta$ contains only angles associated to points on the arc and we are done.

\appendix

\section{Measurability of $\alpha \mapsto \mu_\alpha(A)$.}\label{sec: appendix}

In this section we show that for all Borel measurable $A \subseteq \widetilde \Omega$, $\alpha \mapsto \mu_\alpha(A)$ is Lebesgue measurable. By the monotone class theorem, it suffices to consider the case that $A$ is a cylinder event; that is, that there exists $M>0$ such that $A$ depends only on passage times $\omega_e$, Busemann increments $(\theta_1(v),\theta_2(v))$ and graph variables $\eta(f)$ for vertices $v$ in $[-M,M]^2$, and edges $e$ and directed edges $f$ with both endpoints in $[-M,M]^2$. Recall that for $\alpha \in \mathbb{R}$,
\[
\hat L_\alpha = \{x \in \mathbb{Z}^2 : x + [-1/2,1/2)^2 \cap L_\alpha \neq \varnothing\}
\]
and that passage times to $L_\alpha$ are actually defined to $\hat L_\alpha$. We are interested in how this set changes near $[-M,M]^2$ as we vary $\alpha$. For this reason, define for each $v \in \mathbb{Z}^2$
\[
C_v^- = \inf \{ \alpha: v \in \hat L_\alpha \} \text{ and } C_v^+ = \sup \{ \alpha : v \in \hat L_\alpha\}\ .
\]
It follows that for all $v$, $C_v^- < C_v^+$ and
\[
v \in \begin{cases}
\hat L_\alpha & \text{ if } \alpha \in (C_v^-, C_v^+) \\
\hat L_\alpha^c & \text{ if } \alpha \in \mathbb{R} \setminus [C_v^-, C_v^+]
\end{cases}\ .
\]
Define the set
\[
X = \cup_{v \in \mathbb{Z}^2} \{C_v^-, C_v^+\}
\]
and note that $X$ is countable. To prove Lebesgue measurability of $\alpha \mapsto \mu_\alpha(A)$, we show that
\begin{equation}\label{eq: it_suffices}
f(\alpha) := \mu_\alpha(A) \text{ is continuous except at } \alpha \in X\ .
\end{equation}

Let $\alpha \in [0,n] \setminus X$ and let $\e>0$. For any integer $N \geq M$ such that $[-N,N]^2$ intersects $\hat L_\alpha$ let $\mathcal{P}_N$ be the collection of all lattices paths whose vertices are in $[-N,N]^2$. Last define the approximate passage times for $x \in [-N,N]^2$
\[
\tau_N(x,L_\alpha) = \min_{\stackrel{x \in \gamma \in \mathcal{P}_N}{\gamma \cap \hat L_\alpha \neq \varnothing}} \tau(\gamma)
\]
and geodesics $G_N(x,L_\alpha)$ to be the minimizing paths. Let $G(x,L_\alpha)$ be the original geodesic from $x$ to $L_\alpha$. Using the shape theorem, we can choose $N$ large enough that 
\begin{equation}\label{eq: Nfix}
\mathbb{P}\left( \min_{\stackrel{v \in [-M,M]^2}{w \notin (-N,N)^2}} \tau(v,w) > \max_{v \in [-M,M]^2} \tau(v,L_\alpha) \right) \geq 1-\e\ .
\end{equation}

For $N$ fixed as above, the condition that $\alpha \notin X$ implies that we can choose $\delta>0$ such that the interval $(\alpha-\delta, \alpha + \delta)$ is contained in the complement of the finite set
\[
X_N = \cup_{v \in [-N,N]^2} \{C_v^-,C_v^+\}\ .
\]
It follows that 
\begin{equation}\label{eq: deltafix}
\text{for all }\beta \text{ with }|\alpha - \beta| < \delta,~ \hat L_\alpha \cap [-N,N]^2 = \hat L_\beta \cap [-N,N]^2\ .
\end{equation}

Having fixed $\delta$ above we now prove that if $|\beta - \alpha| < \delta$ then $|\mu_\alpha(A) - \mu_\beta(A)| < \e$. Using the definition of $\Phi_\alpha$ we can first give an upper bound 
\begin{equation}\label{eq: pizzaend}
|\mu_\alpha(A) - \mu_\beta(A)| \leq \mathbb{P}(\Phi_\alpha^{-1}(A) \Delta \Phi_\beta^{-1}(A))\ ,
\end{equation}
 where $\Delta$ is the symmetric difference operator. Note that the events on the right side are determined by (a) $\omega_e$ for $e$ with both endpoints in $[-M,M]^2$, (b) the geodesics $G(x,L_\alpha)$ and $G(x,L_\beta)$ from all points $x \in [-M,M]^2$ to the lines $L_\alpha$ and $L_\beta$ and (c) the passage times of these geodesics. Therefore the right side of \eqref{eq: pizzaend} is bounded above by
\[
\mathbb{P}( \exists ~x \in [-M,M]^2 \text{ such that } G(x,L_\alpha) \neq G(x,L_\beta))\ .
\]
However if such an $x$ exists then by \eqref{eq: deltafix}, one of two geodesics must exit the box $[-N,N]^2$. A subpath of this geodesic must cross from $[-M,M]^2$ to the complement of $(-N,N)^2$, so the event $E(M,N)$ in \eqref{eq: Nfix} cannot occur. Thus
\[
|\mu_\alpha(A) - \mu_\beta(A)| \leq \mathbb{P}(E(M,N)^c) < \e \text{ if } |\beta - \alpha| < \delta\ ,
\]
so $f$ is continuous at $\alpha$, giving measurability of $f$.

\bigskip
\noindent
{\bf Acknowledgements.}
The authors thank the organizers of the PASI school in Santiago and Buenos Aires, where some of this work was done. M. D. thanks C. Newman and the Courant Institute for summer funds and support. J. H. thanks M. Aizenman for funds and advising.

\bigskip
\noindent
{\tt Michael Damron: mdamron@math.princeton.edu \\ Jack Hanson: ~~~jthanson@princeton.edu}

\end{document}